\Crefname{figure}{Figure}{Figures}
\apptocmd{\sloppy}{\hbadness 10000\relax}{}{}
\newcommand{\dual}[1]{\langle {#1} \rangle}
\newcommand{\norm}[1]{\lVert {#1} \rVert}
\newcommand{\abs}[1]{\lvert {#1} \rvert}
\newcommand{\absB}[1]{\Big\lvert {#1} \Big\rvert}
\newcommand{\ssnm}[1]
{
  \left\vert\kern-0.25ex
  \left\vert\kern-0.25ex
  \left\vert
  {#1}
  \right\vert\kern-0.25ex
  \right\vert\kern-0.25ex
  \right\vert
}
\def\spher@harm#1{%
  \vbox{\hbox{%
      \offinterlineskip
      \valign{&\hb@xt@2\p@{\hss$##$\hss}\vskip.2ex\cr#1\crcr}%
    }\vskip-.36ex}%
}
\def\gshone{\spher@harm{.}}
\def\gshtwo{\spher@harm{.&.}}
\def\gshthree{\spher@harm{.&.&.}}
\let\gsh\spher@harm
\newtheorem{lemma}{Lemma}[section]
\newtheorem{remark}{Remark}[section]
\newtheorem{theorem}{Theorem}[section]
\numberwithin{equation}{section}
\def\@captype{table}\makeatother
\begin{document}
\title{
  \Large\bf Convergence of a spatial semidiscretization for a
  three-dimensional stochastic Allen-Cahn equation with
  multiplicative noise\thanks{This work was partially supported by National Natural
    Science Foundation of China under grant 12301525 and
    Natural Science Foundation of Sichuan Province under grant 2023NSFSC1324.}
}

\author[1]{Qin Zhou\thanks{zqmath@cwnu.edu.cn}}
\author[2]{Binjie Li\thanks{libinjie@scu.edu.cn}}
\affil[1]{School of Mathematics, China West Normal University, Nanchong 637002, China}
\affil[2]{School of Mathematics, Sichuan University, Chengdu 610064, China}









\date{}
\maketitle
\begin{abstract}
  This paper studies the convergence of a spatial semidiscretization of a three-dimensional stochastic Allen-Cahn equation with multiplicative noise. For non-smooth initial data, the regularity of the mild solution is investigated, and an error estimate is derived within the spatial \( L^2 \)-norm setting.
  In the case of smooth initial data, two error estimates are established within the framework of general spatial \(L^q\)-norms.
\end{abstract}

\medskip\noindent{\bf Keywords:}
stochastic Allen-Cahn equation, spatial semidiscretization,
discrete stochastic maximal $ L^p $-regularity

\section{Introduction}
Let $ \mathcal{O} $ be a bounded, convex domain in $ \mathbb{R}^3 $ with a smooth boundary $ \partial\mathcal{O} $, and let $ \Delta $ denote the Laplace operator equipped with homogeneous Dirichlet boundary conditions. We consider the stochastic Allen–Cahn equation given by
\begin{subequations}
\label{eq:model}
\begin{numcases}{}
\mathrm{d}y(t) = \left( \Delta y + y - y^3 \right)(t) \, \mathrm{d}t + F(y(t)) \, \mathrm{d}W_H(t), \quad 0 \leqslant t \leqslant T, \\
y(0) = y_0,
\end{numcases}
\end{subequations}
where $ y_0 $ is the given initial condition, $ W_H $ is an $ H $-cylindrical Brownian motion, and the operator $ F $ will be specified later.

The stochastic Allen-Cahn equations are widely used to model
phase transitions in materials science;
see e.g.,~\cite{Berglund2019,Funaki1995,Kohn2007,Ondrejat2023,Roger2013}.
They are also an important class of nonlinear stochastic partial
differential equations from a mathematical perspective. Recently,
researchers in the field of numerical analysis have shown
considerable interest in this topic; see~\cite{Becker2023,Becker2019,
Brehier2019IMA,Brehier2020,Feng2017SIAM,Jentzen2020IMA,LiuQiao2019,
LiuQiao2021,Prohl2018,Ondrejat2023,QiWang2019,Wangxiaojie2020} and
the references therein. We briefly summarize some closely related
works of numerical analysis of three-dimensional stochastic
Allen-Cahn equations as follows.
Feng et al.~\cite{Feng2017SIAM} analyzed finite element approximations
of a stochastic Allen-Cahn equation with gradient-type multiplicative noise.
Majee and Prohl~\cite{Prohl2018} derived strong convergence rates for
a stochastic Allen-Cahn equation driven by a multiplicative Wiener process.
Br\'ehier et al.~\cite{Brehier2019IMA} analyzed an explicit temporal
splitting numerical scheme for a stochastic Allen-Cahn equation driven
by an additive Wiener process.
Jentzen and Pusnik~\cite{Jentzen2020IMA} proposed a numerical method for
approximating a class of semilinear stochastic evolution equations
with non-globally Lipschitz continuous nonlinearities.
Qi and Wang~\cite{QiWang2019} derived strong approximation errors for a
stochastic Allen-Cahn equation driven by an additive Wiener process.
Liu and Qiao~\cite{LiuQiao2021} established a general theory of the numerical
analysis for the stochastic Allen-Cahn equations with multiplicative noise.

The motivation for this work stems from two important gaps in the existing literature:
\begin{itemize}
  \item To the best of our knowledge, the numerical analysis of the three-dimensional stochastic Allen–Cahn equation
  with non-smooth initial data remains underdeveloped, despite its practical significance.
  Existing studies often impose restrictive regularity assumptions on the initial data.
  For instance, the analysis in \cite{LiuQiao2021} requires that the initial condition $ y_0 $ belongs to
  the space $ \dot{H}^{\alpha,2} $ for some $ \alpha \in [1,2] $, while \cite{QiWang2019} assumes $ \dot{H}^{1,2} $-regularity;
  the precise definition of these spaces is given in \cref{sec:pre}.
  The treatment of non-smooth initial data introduces substantial theoretical and numerical challenges.
  On the one hand, there is a lack of rigorous results for the three-dimensional stochastic Allen–Cahn
  equation with multiplicative noise and rough initial conditions (see, e.g., \cite{LiuWei2013, LiuWei2010}).
  On the other hand, the presence of singularities due to non-smooth initial data significantly complicates the numerical analysis.

  \item Current numerical studies on the three-dimensional stochastic Allen–Cahn equation with multiplicative noise predominantly focus on error estimates in the $L^2$-norm in space. However, the analysis of numerical approximations
  in more general spatial $L^q$-norms (for $q \geqslant 2$) remains largely unaddressed,
  which limits the scope and applicability of the theoretical results.
\end{itemize}

The main contributions of this work are summarized as follows:
\begin{itemize}
  \item For the model problem \cref{eq:model} with non-smooth initial data $ y_0 \in L^\infty(\mathcal{O}) $, we establish essential regularity estimates within the general framework of spatial $ L^q $-norms. These estimates are crucial for the subsequent numerical analysis. Specifically, we show that the mild solution satisfies
    \[
      y \in L_{\mathbb{F}}^p(\Omega; C([0,T]; L^q(\mathcal{O}))) \quad \text{for all } p, q \in (2, \infty).
    \]
    Our analysis combines key tools from both stochastic and deterministic PDE theory, including: stochastic integration and Itô's formula in UMD Banach spaces \cite{Weis_Ito_2008}; stochastic maximal $ L^p $-regularity estimates \cite{Neerven2012b, Neerven2012}; the Lyapunov framework for stochastic partial differential equations \cite{LiuWei2013, LiuWei2010}; and maximal $ L^p $-regularity estimates for deterministic evolution equations \cite{Weis2001}.
  
  \item To address the singularity induced by the non-smooth initial data, we employ a decomposition technique that separates the error into a singular and a more regular component. This allows us to derive the error estimate
    \[
      \norm{(y - y_h)(t)}_{L^p(\Omega; L^2(\mathcal{O}))} \leqslant c \big( h^{2/p - \epsilon} + h^{2/p} t^{-1/p} \big), \quad \forall t \in (0,T], \, p \in (2,\infty),
    \]
    where $ y_h $ is the strong solution of the spatial semidiscretization, $ \epsilon > 0 $ can be arbitrarily small, and $ c > 0 $ is a constant independent of $ t $ and the spatial mesh size $ h $. This result can be viewed as a stochastic counterpart of the classical deterministic error estimate given in \cite[Theorem~3.2]{Thomee2006}.
  
  \item For smooth initial data $ y_0 \in W_0^{1,\infty}(\mathcal{O}) \cap W^{2,\infty}(\mathcal{O}) $, we establish the optimal error estimate
    \begin{align*}
      \norm{y - y_h}_{L^p(\Omega \times (0,T); L^q(\mathcal{O}))} \leqslant c h^2,
    \end{align*}
    and the nearly optimal pathwise uniform error estimate
    \begin{align*}
      \norm{y - y_h}_{L^p(\Omega; C([0,T]; L^\infty(\mathcal{O}))} \leqslant c h^{2 - \epsilon} \quad \forall \epsilon > 0,
    \end{align*}
    where $ p, q \in (2,\infty) $ and $ c > 0 $ is a generic constant independent of the spatial mesh size $ h $. The analysis combines classical finite element theory \cite{Bakaev2002,Thomee2006} with discrete maximal $ L^p $-regularity \cite{Geissert2006} and discrete stochastic maximal $ L^p $-regularity \cite{LiLpSpatail2023}.
\end{itemize}

To the best of our knowledge, this work provides the first result on the spatial convergence of numerical approximations for the three-dimensional stochastic Allen–Cahn equation with multiplicative noise, established within the general framework of spatial $ L^q $-norms.
Although we assume deterministic initial data, the analysis can be naturally extended to the case of random initial conditions.
 The main contribution of this study lies in developing a rigorous and general numerical framework for spatial semidiscretizations of nonlinear stochastic parabolic equations under the framework of general spatial $ L^q $-norms, thereby distinguishing it from previous works.



The remainder of this paper is structured as follows.
Section \ref{sec:pre} introduces notation, defines the operator $F$,
presents the mild solution to the model problem \cref{eq:model}, and describes its spatial semidiscretization.
Section \ref{sec:main} states the main results of this work,
analyzing the regularity properties of the mild solution under non-smooth initial data,
and establishes the convergence of the spatial semidiscretization for both non-smooth and smooth initial conditions.
Section \ref{sec:proof} provides the rigorous proofs of these results.
Finally, Section \ref{sec:conclusion} concludes the paper with a summary of the key findings.

\section{Preliminaries}
\label{sec:pre}
\medskip\noindent\textbf{Notational Conventions.}
For Banach spaces $ E_1 $ and $ E_2 $, we denote by $ \mathcal{L}(E_1, E_2) $ the space of all bounded linear operators from $ E_1 $ to $ E_2 $; in the case $ E_1 = E_2 $, we write $ \mathcal{L}(E_1) $ for brevity. The identity operator on any given space is denoted by $ I $.
For $ \theta \in (0,1) $ and $ p \in (1,\infty) $, the notation $ (\cdot, \cdot)_{\theta,p} $ refers
to the real interpolation space defined via the $ K $-method, as presented in Chapter 1 of \cite{Lunardi2018}.

Let $\mathcal{O} \subset \mathbb{R}^3$ denote a bounded convex domain with $\mathcal{C}^2$-regular boundary $\partial\mathcal{O}$, as defined in \cite[p.~39]{Yagi2010}. For $q \in [1,\infty]$, we adopt the abbreviation $L^q := L^q(\mathcal{O})$ for Lebesgue spaces and denote by $W_0^{1,q}(\mathcal{O})$ and $W^{2,q}(\mathcal{O})$ the standard Sobolev spaces, following the conventions established in \cite[Chapter~7]{Gilbarg2001}. The Laplace operator $\Delta$ is equipped with homogeneous Dirichlet boundary conditions.
For $\alpha \geqslant 0$ and $q \in (1,\infty)$, define $\dot{H}^{\alpha,q}$ as the domain of the operator $(-\Delta)^{\alpha/2}$ in $L^q$, endowed with the natural norm:
\[
\|v\|_{\dot{H}^{\alpha,q}} := \left\|(-\Delta)^{\alpha/2}v\right\|_{L^q}, \quad v \in \dot{H}^{\alpha,q}.
\]
The dual space of $\dot{H}^{\alpha,q}$ is denoted by $\dot{H}^{-\alpha,q'}$, where $q'$ is the Hölder conjugate of $q$.
When $\alpha = 2$, $\dot{H}^{2,q}$ coincides with $W_0^{1,q}(\mathcal{O}) \cap W^{2,q}(\mathcal{O})$,
and its norm is equivalent to the $W^{2,q}(\mathcal O)$-norm (cf. \cite[Theorem~9.15]{Gilbarg2001}).
For $\alpha = 1$, $\dot{H}^{1,q}$ agrees with $W_0^{1,q}(\mathcal{O})$, with equivalent norms (see \cite[Theorem~16.14]{Yagi2010}).

Let $ (\Omega, \mathcal{F}, \mathbb{P}) $ be a complete probability space endowed with a filtration $ \mathbb{F} := (\mathcal{F}_t)_{t \geqslant 0} $ satisfying the usual conditions.
On this stochastic basis, we define a sequence $ (W_n)_{n \in \mathbb{N}} $ of independent real-valued $ \mathbb{F} $-adapted Brownian motions. The associated expectation operator is denoted by $ \mathbb{E} $.
Let $ H $ be a separable Hilbert space with inner product $ (\cdot,\cdot)_H $ and an orthonormal basis $ (h_n)_{n \in \mathbb{N}} $.
The $ H $-cylindrical Brownian motion $ W_H $ is defined such that, for each $ t \in \mathbb{R}_{+} $, $ W_H(t) \in \mathcal{L}(H, L^2(\Omega)) $ is given by
$$
  W_H(t)x := \sum_{n \in \mathbb{N}} W_n(t) (x, h_n)_H, \quad \forall x \in H.
$$
We refer the reader to \cite{Neerven2007} for the stochastic integration with respect to $ W_H $.

Let $ p \in (1, \infty) $ and let $ E $ be a Banach space. Denote by $ L^p_{\mathbb{F}}(\Omega \times (0, T); E) $ the closure in the Bochner space $ L^p(\Omega \times (0, T); E) $ of the linear span of the set  
$$
\big\{ \mathbbm{1}_{(s,t] \times F} \otimes e : e \in E,\, 0 \leqslant s < t \leqslant T,\, F \in \mathcal{F}_s \big\},
$$
where $ \mathbbm{1}_{(s,t] \times F} $ denotes the indicator function of the set $ (s, t] \times F $.  
Let $ L^0_{\mathbb{F}}(\Omega; C([0, T]; E)) $ denote the space of all continuous, $ \mathbb{F} $-adapted, $ E $-valued processes.
Two processes $ g_1 $ and $ g_2 $ in $ L_{\mathbb F}^0(\Omega;C([0,T];E)) $ are considered identical if
$ g_1 = g_2 $ in $ C([0,T];E) $ holds $ \mathbb P $-almost surely.
We denote by $ L^p_{\mathbb{F}}(\Omega; C([0, T]; E)) $ the subspace of all processes $ g \in L^0_{\mathbb{F}}(\Omega; C([0, T]; E)) $ satisfying  
$$
\mathbb{E} \big[ \|g\|_{C([0, T]; E)}^p \big] < \infty,
$$
where $ C([0, T]; E) $ is the space of continuous $ E $-valued functions on $ [0, T] $, equipped with the supremum norm.
The space $ \gamma(H, E) $ denotes the space of $ \gamma $-radonifying operators from the Hilbert space $ H $ to $ E $ (see, e.g., \cite[Chapter 9]{HytonenWeis2017}).
If $ E $ is itself a Hilbert space, then $ \gamma(H, E) $ is isometrically isomorphic to the space of
Hilbert–Schmidt operators from $ H $ to $ E $.

\medskip\noindent\textbf{Definition of the Operator \(F\).}
Let $(f_n)_{n \in \mathbb{N}}$ be a sequence of real-valued, continuously differentiable functions defined on $\overline{\mathcal{O}} \times \mathbb{R}$, satisfying the following conditions:
\begin{align}
  & \sum_{n\in\mathbb N} |f_n(x,0)|^2 = 0,  && \forall x \in \partial\mathcal O,  \label{eq:fn0} \\
  & \sum_{n\in\mathbb N} \big( |f_n(x,y)|^2 + |\nabla_x f_n(x,y)|^2 \big) \leqslant C_F(1 + |y|^2), && \forall x \in \mathcal O, \, \forall y \in \mathbb R, \label{eq:fn-fnx-l2} \\
  &\sum_{n\in\mathbb N}  |\nabla_yf_n(x,y)|^2 \leqslant C_F, &&  \forall x \in \mathcal O, \, \forall y \in \mathbb R, \label{eq:fny-l2}
\end{align}
where $C_F > 0$ is a constant independent of $x$ and $y$. Here and in the sequel, we slightly abuse notation by identifying each $f_n$ with the corresponding Nemytskii operator defined pointwise by
$$
f_n(v) := f_n(\cdot, v(\cdot)), \quad \text{for all } v \in L^2.
$$
Recall that $(h_n)_{n \in \mathbb{N}}$ denotes a fixed orthonormal basis of the separable Hilbert space $H$.
Given condition \cref{eq:fn-fnx-l2}, for each $v \in L^2$, define the operator $F(v) \in \gamma(H, L^2)$ by
\begin{equation}
  \label{eq:F-def}
  F(v) := \sum_{n\in\mathbb N} h_n \otimes f_n(v),
\end{equation}
where the elementary tensor $h_n \otimes f_n(v) \in \gamma(H, L^2)$ acts on $x \in H$ via
$$
(h_n \otimes f_n(v))(x) := (x, h_n)_H f_n(v).
$$
In this setting, the model equation \eqref{eq:model} takes the explicit form of the stochastic evolution equation:
$$
  \begin{cases}
    \mathrm{d}y(t) = \left(\Delta y + y - y^3\right)(t) \, \mathrm{d}t + \displaystyle\sum_{n\in\mathbb N}
     f_n(y(t)) \,\mathrm{d}W_n(t), \quad t \in [0, T], \\
    y(0) = y_0,
  \end{cases}
$$
where $ \Delta $ denotes the Laplace operator with homogeneous Dirichlet boundary conditions.

\medskip\noindent\textbf{Mild Solutions.}
We define the two convolution operators:
\begin{align}
  (S_0 g)(t) &:= \int_0^t e^{(t-s)\Delta} g(s) \,\mathrm{d}s, \quad 0 \leqslant t \leqslant T,
  \label{eq:S0-def} \\
  (S_1 g)(t) &:= \int_0^t e^{(t-s)\Delta} g(s) \,\mathrm{d}W_H(s), \quad 0 \leqslant t \leqslant T,
  \label{eq:S1-def}
\end{align}
whenever the integrals are well-defined.
Here, $ (e^{t\Delta})_{t \geqslant 0} $ denotes the analytic semigroup generated by the Laplace operator $ \Delta $ under homogeneous Dirichlet boundary conditions.

Assume that $ y_0: (\Omega,\mathcal F_0,\mathbb P) \to L^4 $ is strongly measurable.
A process $ y \in L_{\mathbb F}^0(\Omega;C([0,T];L^4)) $ is called a mild solution to equation \eqref{eq:model} if it satisfies, $\mathbb{P}$-almost surely for all $ t \in [0,T] $,
\begin{equation}
  \label{eq:y-mild-def}
  y(t) = G(t) + \big[S_0(y - y^3)\big](t) + \big[S_1 F(y)\big](t),
\end{equation}
where
\begin{equation}
  \label{eq:G-def}
  G(t) := e^{t\Delta} y_0, \quad t \in [0, T].
\end{equation}

\medskip\noindent\textbf{Spatial Semidiscretization.}
Let $\mathcal{K}_h$ be a conforming and quasi-uniform triangulation of the domain $\mathcal{O}$, consisting of three-dimensional simplices. We define the finite element space $X_h$ as
$$
  X_h := \Bigl\{ v_h \in C(\overline{\mathcal{O}}) \Bigm| v_h = 0 \text{ on } \overline{\mathcal{O}} \setminus \mathcal{O}_h, \text{ and } v_h \text{ is piecewise linear with respect to each } K \in \mathcal{K}_h \Bigr\},
$$
where $C(\overline{\mathcal{O}})$ denotes the space of continuous functions on $\overline{\mathcal{O}}$, and $\mathcal{O}_h$ is the closure of the union of all elements in $\mathcal{K}_h$. We assume that every boundary vertex of $\mathcal{O}_h$ lies on $\partial\mathcal{O}$. The spatial mesh size $h$ is defined as the maximum diameter over all elements in $\mathcal{K}_h$.
Let $P_h$ denote the $L^2$-orthogonal projection onto $X_h$, and define the discrete Laplace operator $\Delta_h: X_h \to X_h$ via the identity
$$
  \int_{\mathcal{O}} (\Delta_h u_h) v_h \, \mathrm{d}x =
  -\int_{\mathcal{O}} \nabla u_h \cdot \nabla v_h \, \mathrm{d}x,
  \quad \text{for all } u_h, v_h \in X_h.
$$

We consider the following spatial semidiscretization of the model equation:
\begin{subequations}
\label{eq:yh}
\begin{numcases}{}
\mathrm{d}y_h(t) = \left( \Delta_h y_h + y_h - P_h y_h^3 \right)(t) \,\mathrm{d}t + P_h F(y_h(t)) \,\mathrm{d}W_H(t), \quad t \in [0, T], \\
y_h(0) = P_h y_0.
\end{numcases}
\end{subequations}
Suppose that $y_0: (\Omega,\mathcal{F}_0,\mathbb{P}) \to L^2(\mathcal{O})$ is strongly measurable. A process $y_h \in L_{\mathbb{F}}^0(\Omega;C([0,T];X_h))$, where $X_h$ is equipped with the $L^2$-norm, is called a strong solution to \eqref{eq:yh} if, $\mathbb{P}$-almost surely, the identity
$$
y_h(t) = y_h(0) + \int_0^t \left( \Delta_h y_h + y_h - P_h y_h^3 \right)(s) \,\mathrm{d}s + \int_0^t P_h F(y_h(s)) \,\mathrm{d}W_H(s)
$$
holds for all $t \in [0,T]$.
By the standard stability estimate for $P_h$ (see, e.g., \cref{eq:Ph-stab}),
together with the growth and Lipschitz conditions on $F$ given in \cref{lem:F-interp}, the classical theory of stochastic differential equations (see, e.g., \cite[Theorem~3.21]{Pardoux2014}) ensures the existence and uniqueness of a strong solution $y_h$ to the spatial semidiscretization \eqref{eq:yh}.

\section{Main results}
\label{sec:main}


In the remainder of this paper, we denote by $ c $ a generic positive constant that is independent of the spatial mesh size $ h $. It may, however,
depend on the final time $ T $, the domain $ \mathcal{O} $, the constant $ C_F $ from \cref{eq:fn-fnx-l2,eq:fny-l2},
the indices of the relevant Lebesgue, Sobolev, and interpolation spaces, the initial data $ y_0 $,
the shape-regularity parameters of the triangulation $ \mathcal{K}_h $, and the exponents associated with $ h $,
unless otherwise specified. The value of $ c $ may differ at each occurrence.

\begin{theorem}
  \label{thm:y-regu}
  Assume that \( y_0 \in L^\infty \) is deterministic. Then equation \cref{eq:model} admits a unique
  mild solution \( y \), which satisfies that
  \begin{equation}
    \label{eq:y-lplq}
    y \in L_\mathbb{F}^p(\Omega \times (0, T); L^q),
    \qquad \forall p, q \in (2, \infty).
  \end{equation}
  Furthermore, for any \( p,q \in (2, \infty) \) and \( \epsilon > 0 \),
  the solution \( y \) possesses the following regularity properties:
  \begin{align}
     & y \in L_{\mathbb F}^p\big( \Omega; C([0, T]; L^q) \big), \label{eq:y-C}            \\
     & y \in L_{\mathbb F}^p\big( \Omega \times (0, T); \dot{H}^{2/p - \epsilon, q} \big), \label{eq:y-lpHq} \\
     & y - G \in L_{\mathbb F}^p\big( \Omega; C([0, T]; \dot{H}^{1 - \epsilon, q}) \big), \label{eq:y-G-C} \\
     & y - G \in L_{\mathbb F}^p\big( \Omega \times (0, T); \dot{H}^{1 + 2/p - \epsilon, q} \big), \label{eq:y-G-lplq}
  \end{align}
  where \( G \) is defined by \cref{eq:G-def}.
\end{theorem}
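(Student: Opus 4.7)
The plan is to proceed in three stages: existence and uniqueness via truncation and an a priori bound, the key $L^p(\Omega; L^\infty_T L^q)$ estimate via It\^o's formula in $L^q$, and the fractional regularity of $y$ and $y - G$ via the mild formulation combined with deterministic and stochastic maximal $L^p$-regularity.

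For existence and uniqueness, I would truncate the drift $y - y^3$ by a globally Lipschitz modification $f_R$ (e.g., a smooth cutoff at $|y| = R$), for which a standard Banach fixed point argument in $L^p_{\mathbb{F}}(\Omega; C([0,T]; L^4))$ produces a unique mild solution $y^R$, and then remove the truncation using a uniform-in-$R$ a priori bound. For the a priori estimate itself, I would apply It\^o's formula in the UMD space $L^q$ \cite{Weis_Ito_2008} to $\Phi(v) := \|v\|_{L^q}^q$. The Laplacian contributes a non-positive integration-by-parts term, the cubic nonlinearity produces the strongly dissipative contribution $-q \int_{\mathcal{O}} y^{q+2}\,dx$, and the quadratic-variation term is bounded by $c(1 + \|y\|_{L^q}^q)$ via \cref{eq:fn-fnx-l2} together with the square-function identification of $\gamma(H, L^q)$ (valid since $L^q$ has type~$2$). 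Combining with the Burkholder--Davis--Gundy inequality and Gr\"onwall yields $\mathbb{E}\sup_{t \in [0,T]} \|y(t)\|_{L^q}^p < \infty$ for all $p, q \in (2,\infty)$, hence \cref{eq:y-lplq}; higher $p$-moments are obtained by an additional scalar It\^o step on $(\|y\|_{L^q}^q)^{p/q}$.

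For the fractional regularity, I would exploit the mild decomposition $y - G = S_0(y - y^3) + S_1 F(y)$. Deterministic parabolic maximal $L^p$-regularity in $L^q$ \cite{Weis2001} applied to $S_0$, together with $y^3 \in L^p_{\mathbb{F}}(\Omega \times (0,T); L^{q/3})$ from the previous step and Sobolev embedding with $q$ large, gives $S_0(y - y^3)$ in both $L^p_{\mathbb{F}}(\Omega \times (0,T); \dot{H}^{2, q'})$ and $L^p_{\mathbb{F}}(\Omega; C([0,T]; \dot{H}^{2 - 2/p, q'}))$ for a suitable $q'$. Stochastic maximal $L^p$-regularity in the UMD space $L^q$ \cite{Neerven2012, Neerven2012b} applied to $S_1$, combined with the bound $F(y) \in L^p_{\mathbb{F}}(\Omega \times (0,T); \gamma(H, L^q))$ obtained from \cref{eq:fn-fnx-l2}, yields
\[
  S_1 F(y) \in L^p_{\mathbb{F}}\bigl(\Omega \times (0,T); \dot{H}^{1,q}\bigr) \cap L^p_{\mathbb{F}}\bigl(\Omega; C([0,T]; \dot{H}^{1 - 2/p, q})\bigr).
\]
Iterating with $p$ large (so that $2/p$ is arbitrarily small) produces \cref{eq:y-G-C,eq:y-G-lplq}; adding $G$ back via the sharp analytic-semigroup bound $\|e^{t\Delta} y_0\|_{\dot{H}^{\alpha,q}} \lesssim t^{-\alpha/2}$ (available since $y_0 \in L^\infty \subset L^q$) then gives \cref{eq:y-C} (the case $\alpha = 0$, trivial) and \cref{eq:y-lpHq} (using $\int_0^T t^{-\alpha p/2}\,dt < \infty$ for $\alpha = 2/p - \epsilon$).

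The main obstacle is balancing three competing difficulties: the non-smooth initial datum $y_0 \in L^\infty$ introduces a genuine singularity in $G$ that rules out any uniform positive regularity in time; the super-linear cubic drift loses a factor of three in Lebesgue integrability, forcing one to trade the parabolic smoothing of $S_0$ against Sobolev embedding with $q$ sufficiently large; and the whole argument must be carried out in the general $L^q$ scale rather than $L^2$, which requires the UMD It\^o formula, the square-function identification of $\gamma(H, L^q)$, and quantitative stochastic maximal $L^p$-regularity in UMD spaces. The sharp exponent $2/p - \epsilon$ in \cref{eq:y-lpHq} is the precise record of these trade-offs.
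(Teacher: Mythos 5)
Your overall architecture — a dissipative It\^o estimate for $\|y\|_{L^q}^q$ exploiting $-q\int_{\mathcal O}y^{q+2}\,\mathrm{d}x$, followed by the mild decomposition $y-G=S_0(y-y^3)+S_1F(y)$ treated with deterministic and stochastic maximal $L^p$-regularity and the semigroup bound $\|G(t)\|_{\dot H^{\alpha,q}}\lesssim t^{-\alpha/2}$ — is exactly the paper's Step~3 and the a priori estimate of its preliminary lemma, and that part of your plan is sound. The genuine gap is in your existence argument. You propose a pointwise cutoff of the drift at $|y|=R$ and then ``remove the truncation using a uniform-in-$R$ a priori bound.'' For the truncated solution $y^R$ to agree with a solution of \cref{eq:model} up to a stopping time, you need the stopping times $\tau_R=\inf\{t:\|y^R(t)\|_{L^\infty}\geqslant R\}$ to exhaust $[0,T]$, i.e.\ an a priori bound on $\sup_t\|y^R(t)\|_{L^\infty}$. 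But your It\^o estimate only yields $L^q$ bounds for finite $q$, with constants that depend on $q$ (Gronwall, the constant $c$ in front of $1+\|y\|_{L^q}^p$), so you cannot let $q\to\infty$; and no maximum principle is available for the multiplicative-noise equation here. The paper circumvents this entirely: it approximates the \emph{initial datum} by $y_0^{(n)}\in\mathrm{span}\{\phi_0,\dots,\phi_n\}$, invokes the variational/Lyapunov framework of Liu for these smooth data, and then uses the monotonicity of $-y^3$ to show that the difference $e_{m,n}=y^{(m)}-y^{(n)}$ obeys $\|e_{m,n}\|_{L^{3p}(\Omega\times(0,T);L^{3q})}\leqslant c\|y_0^{(m)}-y_0^{(n)}\|_{L^{3q}}$, so the approximations are Cauchy and one passes to the limit in the mild formulation. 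No $L^\infty$ control of the solution is ever needed.

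A second, related gap: you apply It\^o's formula to $\Phi(v)=\|v\|_{L^q}^q$ directly to a solution that, in your construction, is only a mild solution in $C([0,T];L^4)$. The term $\int_{\mathcal O}|y|^{q-2}y\,\Delta y\,\mathrm{d}x$ (and its integration by parts into $-(q-1)\int|y|^{q-2}|\nabla y|^2$) is not defined at that regularity. The paper justifies this step by first bootstrapping the smooth-data approximants to $y^{(n)}\in L^p_{\mathbb F}(\Omega\times(0,T);\dot H^{2,q})$ for all $p,q$ — using $\dot H^{1,2}\hookrightarrow L^6$, then $\dot H^{2,2}\hookrightarrow\dot H^{1,6}$, etc.\ together with Lemmas~\ref{lem:S0-regu} and \ref{lem:S1-regu} — and only then applying the UMD It\^o formula. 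Your plan needs either this bootstrap for the truncated solutions or a separate regularization before the $L^q$-energy identity can be used. These two points (how the approximation is removed, and why It\^o's formula is legitimate) are precisely where the paper's detour through eigenfunction-approximated initial data is doing real work, and your proposal does not currently replace it.
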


\begin{theorem}
  \label{thm:conv}
  Assume that \( y_0 \in L^\infty \) is deterministic. Let \( y \) be the
  mild solution of \cref{eq:model}, and let \( y_h \) be the strong solution to the
  spatial semidiscretization \cref{eq:yh}. Then, for any \( p \in (2, \infty) \),
  the following error estimate holds for all \( t \in (0, T] \):
  \begin{equation}
    \label{eq:conv}
    \norm{(y - y_h)(t)}_{L^p(\Omega; L^2)} \leqslant
    c \left( h^{2/p-\epsilon} + h^{2/p} t^{-1/p} \right), \quad \forall \epsilon > 0.
  \end{equation}
\end{theorem}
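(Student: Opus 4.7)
The plan is to split the error into a singular deterministic piece and a comparatively regular residual, estimate each separately, and close the residual by a Gronwall-type argument.

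\emph{Decomposition.} Setting $G_h(t) := e^{t\Delta_h}P_h y_0$, $\tilde y := y - G$, and $\tilde y_h := y_h - G_h$, I write
\[
(y - y_h)(t) = (G - G_h)(t) + (\tilde y - \tilde y_h)(t),
\]
so that $\tilde y(0) = \tilde y_h(0) = 0$ and $\tilde y$ inherits the improved regularity from \cref{eq:y-G-C,eq:y-G-lplq}. The first summand is purely deterministic and carries the singularity; the second satisfies a mild equation driven by $y-y^3$ and $F(y)$ on the continuous side and by $P_h(y_h-y_h^3)$, $P_h F(y_h)$ on the discrete side.

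\emph{The $h^{2/p}t^{-1/p}$ summand.} For the deterministic part I combine the classical Thom\'ee non-smooth data bound $\|(G-G_h)(t)\|_{L^2} \leqslant c h^{2} t^{-1}\|y_0\|_{L^2}$ with the trivial $L^2$-stability bound $\|(G-G_h)(t)\|_{L^2} \leqslant c\|y_0\|_{L^2}$, and interpolate with parameter $\theta = 1/p$ to obtain $\|(G-G_h)(t)\|_{L^2} \leqslant c h^{2/p}t^{-1/p}\|y_0\|_{L^2}$. Being $\omega$-independent, this bound survives the $L^p(\Omega)$-norm unchanged.

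\emph{The $h^{2/p-\epsilon}$ summand.} Using the mild formulations of $y$ and $y_h$, I split
\begin{align*}
\tilde y - \tilde y_h
&= (S_0 - S_{0,h}P_h)(y - y^3) + (S_1 - S_{1,h}P_h)F(y) \\
&\quad + S_{0,h}P_h\bigl[(y - y_h) - (y^3 - y_h^3)\bigr] + S_{1,h}P_h\bigl(F(y) - F(y_h)\bigr).
\end{align*}
The two consistency terms on the first line produce the rate: the interpolated semigroup error $\|(e^{t\Delta} - e^{t\Delta_h}P_h)v\|_{L^2} \leqslant c h^{2\alpha}t^{-\alpha}\|v\|_{L^2}$ with $\alpha = 1/p - \epsilon/2$, combined with deterministic and stochastic maximal $L^p$-regularity and the $L^q$-integrability of $y$, $y^3$, and $F(y)$ from \cref{thm:y-regu} (the cubic term controlled by H\"older, e.g.\ $\|y^3\|_{L^2}\leqslant\|y\|_{L^6}^3$ together with \cref{eq:y-C}), yields an $h^{2/p-\epsilon}$ bound. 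The two stability terms on the second line involve $y-y_h$ through the Lipschitz bound on $F$ and through $y^3-y_h^3 = (y-y_h)(y^2+yy_h+y_h^2)$; they feed back into the estimate and are absorbed by Gronwall. The $h$-independent $L^q$-bounds on $y_h$ needed for the cubic stability term are obtained by applying the same Lyapunov plus discrete stochastic maximal $L^p$-regularity argument used in \cref{thm:y-regu} to \cref{eq:yh}.

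\emph{Main obstacle.} The delicate point is closing the Gronwall loop in the presence of the integrable but strongly singular kernel $t^{-1/p}$ coupled with a cubic nonlinearity in three dimensions: a naive $L^p((0,T);L^2)$-in-time Gronwall fails because $(t^{-1/p})^p$ is non-integrable. I expect to circumvent this by running the closing argument pointwise in $t$ with a weakly singular Gronwall lemma, or in a time-weighted norm, with H\"older exponents tuned so that the cubic stability term draws only on the $L^q$-regularity already provided by \cref{thm:y-regu} and its discrete counterpart.
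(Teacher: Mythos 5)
Your decomposition of the error into the singular deterministic piece $G-G_h$ and a regular residual, and the interpolation of the smooth/non-smooth semigroup error bounds to obtain $\norm{(G-G_h)(t)}_{L^2}\leqslant ch^{2/p}t^{-1/p}$, coincide with the paper's argument (this is exactly \cref{eq:foo-conv4} with the two endpoint bounds interpolated). The consistency terms $(S_0-S_{0,h}P_h)(y-y^3)$ and $(S_1-S_{1,h}P_h)F(y)$ can indeed be made to produce $h^{2/p-\epsilon}$ using \cref{thm:y-regu}. The genuine gap is in the cubic stability term $S_{0,h}P_h\big[(y-y_h)(y^2+yy_h+y_h^2)\big]$. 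The factor $y^2+yy_h+y_h^2$ is an unbounded random field with only finite moments, so to bound the $L^p(\Omega;L^2)$-norm of the product you must apply H\"older's inequality in $\Omega$ and in space, which places a norm of the form $\norm{y-y_h}_{L^{p\alpha}(\Omega;\,\cdot\,)}$ with $p\alpha>p$ on the right-hand side: a strictly stronger norm of the very quantity you are trying to control. No choice of exponents, time weights, or weakly singular Gronwall lemma removes this circularity; the obstruction is the moment loss in $\Omega$ caused by the random, unbounded Lipschitz coefficient of the cubic term, not the kernel $t^{-1/p}$ (which in the paper's argument never enters the Gronwall loop at all, since $G-G_h$ is recombined only at the very end, pointwise in $t$).

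The paper closes this step by a mechanism your mild-formulation route cannot access: it applies It\^o's formula to $\norm{\eta_h(t)}_{L^2}^p$ for a corrected residual $\eta_h$ (with the spatial consistency error peeled off as $\xi_h=S_{0,h}(\Delta_hP_h-P_h\Delta)z$), so that the dangerous contribution of the cubic term appears as $-\int_{\mathcal O}\eta_h^2\,[(P_hy)^2+(P_hy)y_h+y_h^2]\,\mathrm{d}x\leqslant 0$ and is discarded by monotonicity; Young's inequality then leaves only products of \emph{known} small quantities ($\xi_h$ and $P_hG-G_h$) against moments of $y$ and $y_h$, which are handled by \cref{eq:PhG-Gh-y,eq:love}. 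To repair your proof you would essentially have to abandon the mild formulation for the residual and reproduce this energy/It\^o argument, or resort to localization by stopping times, which would not directly yield the stated $L^p(\Omega;L^2)$ rate. A secondary point: the $h$-uniform bound $y_h\in L^{p}(\Omega\times(0,T);L^q)$ for all $p,q$ that you invoke is not what the paper establishes (it only derives, via the Lyapunov estimate of Liu and R\"ockner, the specific mixed bound used in \cref{eq:PhG-Gh-y}), so it would require its own proof, e.g.\ a discrete analogue of the It\^o argument applied to $\norm{y_h}_{L^q}^p$.
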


\begin{theorem}
  \label{thm:conv-smooth}
  Assume that \( y_0 \in W_0^{1, \infty}(\mathcal{O}) \cap W^{2, \infty}(\mathcal{O}) \)
  is deterministic. Let \( y \) be the mild solution of \cref{eq:model},
  and let \( y_h \) be the strong solution of \cref{eq:yh}.
  Then the following error estimates hold:
  \begin{align}
     & \norm{y - y_h}_{L^p(\Omega \times (0, T); L^q)} \leqslant c h^2,
     \quad \forall p,q \in (2,\infty), \label{eq:y-yh-LpLq} \\
     & \norm{y - y_h}_{L^p(\Omega; C([0, T]; L^\infty))} \leqslant c h^{2 - \epsilon},
     \quad \forall p \in (2,\infty), \, \forall \epsilon > 0. \label{eq:y-yh-LpLinfty}
  \end{align}
\end{theorem}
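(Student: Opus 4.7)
The plan is to derive both error estimates from a unified decomposition of $y - y_h$ based on the mild formulations of \cref{eq:model} and \cref{eq:yh}, handling the deterministic finite element semigroup error by the classical results of \cite{Thomee2006, Bakaev2002} and the stochastic portion by the discrete stochastic maximal $L^p$-regularity of \cite{LiLpSpatail2023}. Setting $F_h(t) := e^{t\Delta} - e^{t\Delta_h} P_h$, subtracting the two mild representations yields
\begin{align*}
(y - y_h)(t) = F_h(t) y_0
&+ \int_0^t F_h(t-s)\bigl(y - y^3\bigr)(s) \, \mathrm{d}s
+ \int_0^t F_h(t-s) F(y(s)) \, \mathrm{d}W_H(s) \\
&+ \int_0^t e^{(t-s)\Delta_h} P_h \bigl[(y - y_h) - (y^3 - y_h^3)\bigr](s) \, \mathrm{d}s \\
&+ \int_0^t e^{(t-s)\Delta_h} P_h \bigl[F(y) - F(y_h)\bigr](s) \, \mathrm{d}W_H(s).
\end{align*}
The first three terms form a consistency error depending only on $y$, while the last two are a perturbation to be absorbed by a Gronwall argument at the end.

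Before estimating, I upgrade the regularity of $y$ using the smoothness of the data. Since $y_0 \in \dot{H}^{2,q}$ for every $q \in (2,\infty)$, the homogeneous part $G$ lies in $C([0,T]; \dot H^{2,q})$ uniformly. Inserting \cref{eq:y-C,eq:y-G-C} into the equation for $w := y - G$ and bootstrapping via the deterministic maximal $L^p$-regularity of \cite{Weis2001} for the drift and the stochastic maximal $L^p$-regularity of \cite{Neerven2012, Neerven2012b} for the noise, together with Sobolev embedding in three dimensions to tame the cubic term, upgrades the regularity to $y \in L^p_{\mathbb F}(\Omega; C([0,T]; \dot H^{2 - \epsilon, q}))$ for all $p, q \in (2,\infty)$ and all $\epsilon > 0$; in particular, $y$ is almost surely bounded in $L^\infty$. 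With this in hand, each consistency term is $O(h^2)$ in $L^p(\Omega \times (0,T); L^q)$: $F_h(t) y_0$ by the classical finite element bound $\norm{F_h(t) v}_{L^q} \leqslant c h^2 \norm{v}_{\dot H^{2,q}}$; the deterministic convolution by the same bound together with the uniform regularity of $y - y^3$; and the stochastic convolution by combining the analogous $\gamma$-radonifying estimate $\norm{F_h(t) \cdot}_{\gamma(H, L^q)} \leqslant c h^2 \norm{\cdot}_{\gamma(H, \dot H^{2,q})}$ with the bound $\norm{F(y)}_{\gamma(H, \dot H^{2,q})} \leqslant c(1 + \norm{y}_{\dot H^{2,q}})$ derived from \cref{eq:fn-fnx-l2,eq:fny-l2}.

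For the perturbation terms, I factor $y^3 - y_h^3 = (y^2 + y y_h + y_h^2)(y - y_h)$ and use the almost sure $L^\infty$-bound on $y$ together with an analogous bound on $y_h$, obtained by transcribing the arguments of \cref{thm:y-regu} to the discrete equation \cref{eq:yh} using the stability of $P_h$ and the Lyapunov framework. The stochastic Lipschitz control of $F(y) - F(y_h)$ follows from \cref{eq:fny-l2}. Applying the deterministic discrete maximal $L^p$-regularity of \cite{Geissert2006} to the drift perturbation and the discrete stochastic maximal $L^p$-regularity of \cite{LiLpSpatail2023} to the noise perturbation, and closing via Gronwall's inequality on $\norm{y - y_h}_{L^p(\Omega \times (0,t); L^q)}$, yields \cref{eq:y-yh-LpLq}.

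For the pathwise uniform $L^\infty$-estimate \cref{eq:y-yh-LpLinfty}, I combine the $L^p L^q$ bound with a shifted Sobolev estimate $\norm{y - y_h}_{L^p(\Omega \times (0,T); \dot H^{s, q})} \leqslant c h^{2 - s}$, obtained from the same decomposition in a higher Sobolev scale, and invoke the embedding $\dot H^{s, q} \hookrightarrow L^\infty$ valid for $sq > 3$ to reach an $L^p(\Omega \times (0,T); L^\infty)$ estimate with rate $h^{2 - \epsilon}$. To pass from $L^p L^\infty$ to $L^p_\omega C([0,T]; L^\infty)$, I apply the factorization method of \cite{Neerven2007} to the stochastic convolution pieces of the error, exploiting the analytic smoothing of $e^{t\Delta_h}$ uniformly in $h$, and use the deterministic maximal regularity of $e^{t\Delta_h}$ on $X_h$ in the $L^\infty$-norm for the drift pieces. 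I expect the main obstacle to lie precisely in this last step, since stochastic maximal regularity is not directly available in the $L^\infty$-in-space setting, and one must carefully balance the Sobolev embedding loss against the parabolic smoothing gain, while simultaneously controlling the cubic nonlinearity through the uniform $L^\infty$-bound on $y$ and $y_h$, in order to keep the $h$-exponent arbitrarily close to $2$.
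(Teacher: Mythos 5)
Your mild-formulation decomposition is a reasonable general strategy, but two of its load-bearing steps fail under the hypotheses of this paper, and the paper's actual proof is structured precisely to avoid them. First, your estimate of the stochastic consistency term rests on the bound \(\norm{F(y)}_{\gamma(H,\dot H^{2,q})}\leqslant c(1+\norm{y}_{\dot H^{2,q}})\), which you claim follows from \cref{eq:fn-fnx-l2,eq:fny-l2}. It does not: those conditions control only first derivatives of the \(f_n\) in \(x\) and \(y\), so \cref{lem:F-interp} delivers \(\gamma\)-bounds only up to the \(\dot H^{1,q}\) scale (and \(\dot H^{s,q}\) for \(s<1\)); a \(\gamma(H,\dot H^{2,q})\)-bound would require summability of second derivatives of the \(f_n\), an assumption the paper explicitly declines to make (see the remark contrasting with Assumption~2.3 of \cite{LiuQiao2021}). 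Second, your \(h\)-uniform \(L^\infty\)-bound on \(y_h\), "obtained by transcribing the arguments of \cref{thm:y-regu} to the discrete equation," does not transcribe: the It\^o/Lyapunov argument for \(\norm{y_h(t)}_{L^q}^p\) with \(q>2\) produces the term \(\int_{\mathcal O}|y_h|^{q-2}y_h\,P_h(y_h^3)\,\mathrm{d}x\), and since \(|y_h|^{q-2}y_h\notin X_h\) the projection cannot be removed, so the crucial sign property of the cubic term is lost. Only the \(q=2\) energy estimate survives discretization (which is all the paper uses, in the proof of \cref{eq:PhG-Gh-y}).

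The paper's proof circumvents both obstacles by a different decomposition: writing \(z=y-G\), \(z_h=y_h-G_h\), \(e_h=z_h-P_hz\), \(\xi_h=S_{0,h}(\Delta_hP_h-P_h\Delta)z\), \(\eta_h=e_h-\xi_h\), all consistency errors are carried by \emph{deterministic-regularity} objects (\(G-G_h\), \(z-P_hz\), \(\xi_h\)), each controlled by \(\cref{eq:foo-conv4}\), \(\cref{eq:Ph-conv}\) and \(\cref{eq:foo-conv2}\) using only the regularity \(z\in L^p_{\mathbb F}(\Omega\times(0,T);\dot H^{2,q})\); the noise enters solely through the Lipschitz difference \(F(y_h)-F(y)\) in the equation for \(\eta_h\), so only the \(\gamma(H,L^q)\)-Lipschitz estimate \cref{eq:F-lips} is ever needed. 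The \(L^\infty\)-bound on \(y_h\) is then obtained \emph{a posteriori}: first the \(L^2\)-error estimate \(\norm{y-y_h}_{L^p(\Omega\times(0,T);L^2)}\leqslant ch^2\) is closed (needing only \(L^{2\beta}\)-moment bounds on \(y_h\) from the discrete \(L^2\)-energy estimate), and then the inverse inequality \(\norm{v_h}_{L^\infty}\leqslant ch^{-3/2}\norm{v_h}_{L^2}\) converts that into \(\norm{y_h}_{L^p(\Omega\times(0,T);L^\infty)}\leqslant c+ch^{1/2}\). This ordering is essential and is absent from your plan, which as written is circular on this point. Finally, for \cref{eq:y-yh-LpLinfty} the factorization method is unnecessary: the \(C([0,T];(\dot H_h^{0,q},\dot H_h^{2,q})_{1/2-1/p,p})\)-component already present in \cref{lem:S0h,lem:S1h}, combined with the discrete embedding of \cref{lem:Hh-Linfty}, gives the pathwise-uniform bound directly, with the final \(2/p+3/q\) loss absorbed into \(\epsilon\) by taking \(p,q\) large.
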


It is worth noting that the aforementioned theoretical results remain valid
under the assumption that the operator \( F \) possesses the growth and Lipschitz continuity
properties outlined in Lemma \ref{lem:F-interp}.

\begin{remark}
  Liu~\cite{LiuWei2013} proposed a theoretical framework for a broad spectrum of nonlinear stochastic partial differential equations.
  Applying this framework to the model problem \cref{eq:model} yields
  \[
    y \in L^p(\Omega; C([0, T]; \dot{H}^{1, 2})),
    \quad \forall p \in (2, \infty),
  \]
  provided that \( y_0 \in L^p(\Omega,\mathcal F_0,\mathbb P;\dot{H}^{1, 2}) \), where $ p \in [2,\infty) $.
  Furthermore, Liu and Qiao \cite{LiuQiao2021} have established certain regularity estimates in the context of
  the spatial \( L^2 \)-norm, and some regularity estimates with the spatial \( L^\infty \)-norm were derived
  by using Sobolev's embedding theorem.
  Regarding the stochastic Allen-Cahn equation with additive noise, Bréhier et al.~\cite[Lemma~3.1]{Brehier2019IMA}
  provided some a priori estimates involving general spatial \( L^q \) norms.
\end{remark}

\begin{remark}
  In the numerical analysis of the three-dimensional stochastic Allen-Cahn equation with
  multiplicative noise, the existing literature often imposes strict regularity conditions
  on the diffusion coefficients. For instance, the study in \cite{Prohl2018} demands that
  the diffusion coefficients possess bounded second-order derivatives. Our choice of diffusion
  coefficient \( F \) adheres to the criteria outlined in Assumption 2.2 of \cite{LiuQiao2021};
  however, it does not meet the more stringent requirements stated in Assumption~2.3 of
  the same reference. Investigating the numerical approximations of \cref{eq:model}
  under less restrictive conditions on \( F \), especially the loosening of
  condition \eqref{eq:fn0}, has important theoretical and practical implications.
\end{remark}

\section{Proofs}\label{sec:proof}
The objective of this section is to affirm the validity of
Theorems \ref{thm:y-regu}, \ref{thm:conv}, and \ref{thm:conv-smooth}.
The structure of this section is as follows:
\begin{itemize}
  \item Subsection~\ref{ssec:pre} introduces key lemmas that serve as foundational results.
  \item Subsection~\ref{ssec:regu} focuses on the proof of Theorem \ref{thm:y-regu},
    which establishes the regularity of the mild solution of \cref{eq:model}.
  \item Subsection~\ref{ssec:conv} is dedicated to the proofs of Theorems
    \ref{thm:conv} and \ref{thm:conv-smooth}, which concern the error estimates of
    the spatial semidiscretization.
\end{itemize}

\subsection{Preliminary Lemmas}\label{ssec:pre}
It is a well-established result that for any \( \alpha \in (0,2) \) and $ q \in (1,\infty) $,
the space \( \dot H^{\alpha,q} \) coincides with the complex interpolation space \( [L^q, \dot H^{2,q}]_{\alpha/2} \),
equipped with equivalent norms. This result is documented, for example, in Chapter 16 of \cite{Yagi2010}.
The combination of Propositions 1.3 and 1.4 and Theorem 4.17 from \cite{Lunardi2018} and Theorem C.4.1 in \cite{HytonenWeis2016} yields
the following continuous embeddings.
\begin{lemma}
\label{lem:real-complex}  
Let $ 0 \leqslant \alpha < \beta \leqslant 2 $ and $ p, q \in (1, \infty) $. Then the following continuous embeddings hold:
\begin{itemize}
    \item[(i)] If $ \gamma > (1 - \theta)\alpha + \theta\beta $ for some $ \theta \in (0,1) $, then $ \dot{H}^{\gamma,q} $ is continuously embedded in the real interpolation space $ (\dot{H}^{\alpha,q}, \dot{H}^{\beta,q})_{\theta,p} $.

    \item[(ii)] If $ \gamma < (1 - \theta)\alpha + \theta\beta $ for some $ \theta \in (0,1) $, then the real interpolation space $ (\dot{H}^{\alpha,q}, \dot{H}^{\beta,q})_{\theta,p} $ is continuously embedded in $ \dot{H}^{\gamma,q} $.
\end{itemize}
\end{lemma}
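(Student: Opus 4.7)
The plan is to reduce everything to interpolation between the base couple $(L^q, \dot{H}^{2,q})$, and then assemble standard embeddings in the appropriate order.

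First, I would combine the cited identification $\dot{H}^{\sigma,q} = [L^q, \dot{H}^{2,q}]_{\sigma/2}$ for $\sigma \in (0,2)$ with the fact that complex interpolation spaces belong to both class $J_{\sigma/2}$ and class $K_{\sigma/2}$ relative to the couple $(L^q, \dot{H}^{2,q})$ (which is the content of Theorem~4.17 of~\cite{Lunardi2018}). The reiteration theorem for real interpolation then yields
\[
  (\dot{H}^{\alpha,q}, \dot{H}^{\beta,q})_{\theta,p} = (L^q, \dot{H}^{2,q})_{\eta/2,\, p}, \qquad \eta := (1-\theta)\alpha + \theta\beta,
\]
with equivalent norms. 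The boundary cases $\alpha = 0$ or $\beta = 2$ pose no trouble, since $L^q$ and $\dot{H}^{2,q}$ trivially belong to $J_0 \cap K_0$ and $J_1 \cap K_1$ respectively.

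Next I would invoke two standard facts, supplied by Propositions~1.3 and~1.4 of~\cite{Lunardi2018} together with Theorem~C.4.1 of~\cite{HytonenWeis2016}: the real--complex comparison $(X,Y)_{\sigma,1} \hookrightarrow [X,Y]_\sigma \hookrightarrow (X,Y)_{\sigma,\infty}$ for $\sigma \in (0,1)$, and the monotonicity $(X,Y)_{\sigma_2, r_2} \hookrightarrow (X,Y)_{\sigma_1, r_1}$ whenever $0 < \sigma_1 < \sigma_2 < 1$ and $r_1, r_2 \in [1,\infty]$. For part~(i), since $\gamma > \eta$, concatenation gives
\[
  \dot{H}^{\gamma, q} = [L^q, \dot{H}^{2,q}]_{\gamma/2} \hookrightarrow (L^q, \dot{H}^{2,q})_{\gamma/2, \infty} \hookrightarrow (L^q, \dot{H}^{2,q})_{\eta/2, p} = (\dot{H}^{\alpha,q}, \dot{H}^{\beta,q})_{\theta, p}.
\]
For part~(ii), since $\gamma < \eta$, the chain runs in reverse:
\[
  (\dot{H}^{\alpha,q}, \dot{H}^{\beta,q})_{\theta, p} = (L^q, \dot{H}^{2,q})_{\eta/2, p} \hookrightarrow (L^q, \dot{H}^{2,q})_{\gamma/2, 1} \hookrightarrow [L^q, \dot{H}^{2,q}]_{\gamma/2} = \dot{H}^{\gamma, q}.
\]
The extreme cases $\gamma \in \{0, 2\}$ reduce immediately to trivial embeddings of the endpoints of an interpolation couple.

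The main obstacle I anticipate is the careful verification that reiteration genuinely applies to these specific couples --- namely that $\dot{H}^{\sigma, q}$ really is of class $J_{\sigma/2} \cap K_{\sigma/2}$ relative to $(L^q, \dot{H}^{2,q})$ for all relevant $\sigma$, including the degenerate endpoints $\sigma \in \{0, 2\}$. Once this class-$J \cap K$ property is secured via the cited Theorem~4.17, the remainder of the argument is merely bookkeeping of standard interpolation embeddings.
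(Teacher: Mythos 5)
Your argument is correct and follows essentially the same route the paper intends: it cites exactly the ingredients you use (the identification $\dot{H}^{\sigma,q}=[L^q,\dot{H}^{2,q}]_{\sigma/2}$, the real--complex comparison, the monotonicity of real interpolation spaces for an ordered couple, and reiteration via the class-$J\cap K$ property), merely stating that their combination yields the lemma without writing out the chains of embeddings. Your proposal simply makes that combination explicit, so there is nothing to add beyond noting that the harder of your anticipated obstacles (the reiteration step) is indeed covered by the cited results.
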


Theorem 2.12 and inequality (2.128) in \cite{Yagi2010} provide the following standard
estimate for the growth of the analytic semigroup generated by the Laplace operator with
homogeneous Dirichlet boundary conditions.
\begin{lemma}
\label{lem:etDelta}
Let \( \alpha \in [0,\infty) \) and \( q \in (1,\infty) \). For all \( t > 0 \), the following estimate holds:
\[
\|(-\Delta)^{\alpha} e^{t\Delta}\|_{\mathcal L(L^q)}
+ t\Big\| (-\Delta)^{\alpha} \frac{\mathrm{d}}{\mathrm{d}t}e^{t\Delta}\Big\|_{\mathcal L(L^q)}\leqslant ct^{-\alpha}.
\]
\end{lemma}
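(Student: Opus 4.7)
The statement is a standard smoothing estimate for the analytic semigroup generated by the Dirichlet Laplacian on $L^q$, so my plan is essentially to invoke the sectoriality of $-\Delta$ on $L^q(\mathcal{O})$ and then apply the general analytic-semigroup machinery. First I would recall that for $q\in(1,\infty)$, the Dirichlet Laplacian on a $\mathcal{C}^2$ bounded domain is a sectorial operator on $L^q$ with spectrum in the left half plane, so that $(e^{t\Delta})_{t\geqslant 0}$ is a bounded analytic semigroup. This is the content of Theorem 2.12 in Yagi's book, which already establishes the bound
\[
\|(-\Delta)^{\alpha} e^{t\Delta}\|_{\mathcal L(L^q)} \leqslant c\, t^{-\alpha},\qquad t>0,
\]
for every $\alpha\geqslant 0$, by writing $(-\Delta)^{\alpha}e^{t\Delta}$ as a Dunford integral over a suitable sectorial contour and estimating the resolvent on that contour.

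For the second term I would use the identity $\frac{d}{dt}e^{t\Delta}=\Delta e^{t\Delta}$, valid on the range of the semigroup because it is analytic, so that
\[
(-\Delta)^{\alpha}\frac{d}{dt}e^{t\Delta} = -(-\Delta)^{\alpha+1}e^{t\Delta}.
\]
Applying the same smoothing estimate at exponent $\alpha+1$ yields $\|(-\Delta)^{\alpha+1}e^{t\Delta}\|_{\mathcal L(L^q)}\leqslant c\,t^{-\alpha-1}$, which is exactly inequality (2.128) of Yagi's monograph after multiplying by $t$. Summing the two bounds gives the claimed estimate.

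The main (and only real) subtlety I would flag is checking that the fractional power $(-\Delta)^{\alpha}$ entering the lemma is compatible with the definition used elsewhere in the paper, namely the one used to define $\dot H^{\alpha,q}$ in \cref{sec:pre}: since $-\Delta$ is positive and sectorial on $L^q$, the standard functional calculus produces a unique closed fractional power, so no ambiguity arises. No further work is needed beyond citing Theorem 2.12 and inequality (2.128) in \cite{Yagi2010}, as the authors announce just before the lemma statement.
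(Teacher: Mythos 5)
Your proposal is correct and follows exactly the route the paper takes: the paper offers no independent argument, simply citing Theorem~2.12 and inequality~(2.128) of Yagi's monograph for the two terms, and your reduction of the derivative term to the power $\alpha+1$ via $\frac{\mathrm{d}}{\mathrm{d}t}e^{t\Delta}=\Delta e^{t\Delta}$ is the standard (implicit) step behind that citation. Nothing further is needed.
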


For the convolution operator \( S_0 \), defined by \cref{eq:S0-def},
we have the following celebrated maximal $ L^p $-regularity estimate;
see Theorem 4.2 in \cite{Weis2001} and Corollary 1.14 from \cite{Lunardi2018}.
\begin{lemma}
\label{lem:S0-regu}
Let \( p, q \in (1,\infty) \). Then for any \( g \in L^p(0,T;L^q) \), the following inequality holds:
\begin{align*}
\big\lVert \frac{\mathrm{d}}{\mathrm{d}t} S_0 g \big\rVert_{L^p(0,T;L^q)} +
\norm{S_0 g}_{L^p(0,T;\dot H^{2,q})} +
\norm{S_0 g}_{C([0,T];(L^q,\dot H^{2,q})_{1-1/p,p})}
\leqslant c \norm{g}_{L^p(0,T;L^q)}.
\end{align*}
\end{lemma}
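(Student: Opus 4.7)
The plan is to reduce the lemma to Weis's characterization of maximal $L^p$-regularity together with the standard trace embedding for abstract parabolic equations. First, I would verify that $-\Delta$, equipped with homogeneous Dirichlet boundary conditions, is $\mathcal{R}$-sectorial of angle less than $\pi/2$ on $L^q(\mathcal{O})$ for every $q\in(1,\infty)$. Since $\mathcal{O}$ is bounded and convex with $\mathcal{C}^2$ boundary, the elliptic regularity theory cited after the definition of $\dot H^{\alpha,q}$ identifies $D(-\Delta)$ with $W_0^{1,q}\cap W^{2,q}$, and $-\Delta$ admits a bounded $H^\infty$-calculus on $L^q$. Bounded $H^\infty$-calculus of angle $<\pi/2$ implies $\mathcal{R}$-sectoriality of the same angle in UMD spaces such as $L^q$, which is the hypothesis of Weis's theorem.

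Second, I would invoke Weis's theorem (Theorem~4.2 in \cite{Weis2001}) to conclude that $-\Delta$ admits maximal $L^p$-regularity on $L^q$. Written out, for any $g\in L^p(0,T;L^q)$ the mild solution $u=S_0g$ satisfies $u\in W^{1,p}(0,T;L^q)\cap L^p(0,T;D(-\Delta))$ with
\[
\bigl\|\tfrac{\mathrm{d}}{\mathrm{d}t}S_0g\bigr\|_{L^p(0,T;L^q)} + \|\Delta S_0g\|_{L^p(0,T;L^q)} \leqslant c\|g\|_{L^p(0,T;L^q)}.
\]
Using the equivalence between $\|\Delta v\|_{L^q}$ and $\|v\|_{\dot H^{2,q}}$ coming from the $W^{2,q}$-norm equivalence recorded in the preliminaries, this immediately produces the first two terms on the left-hand side of the asserted inequality.

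Third, for the pathwise continuity in the trace space $(L^q,\dot H^{2,q})_{1-1/p,p}$, I would apply the general trace embedding for maximal regularity spaces, specifically Corollary~1.14 in \cite{Lunardi2018}: any function in $W^{1,p}(0,T;L^q)\cap L^p(0,T;\dot H^{2,q})$ lies in $C([0,T];(L^q,\dot H^{2,q})_{1-1/p,p})$, with the corresponding continuity estimate controlled by the sum of the two $L^p$-norms already estimated. Since $S_0g$ vanishes at $t=0$, no additional initial-trace term appears on the right-hand side. Combining the three estimates gives the claim.

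The main obstacle, and the only nontrivial ingredient, is the verification of the $\mathcal{R}$-sectoriality (equivalently, the bounded $H^\infty$-calculus) of the Dirichlet Laplacian on $L^q(\mathcal{O})$. This rests on harmonic-analytic estimates (Gaussian bounds for the heat kernel or Seeley-type parameter-elliptic resolvent bounds) together with the $\mathcal{C}^2$-regularity of $\partial\mathcal{O}$; once this fact is in hand, the rest of the argument is a mechanical application of the cited abstract results.
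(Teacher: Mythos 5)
Your proposal is correct and follows essentially the same route as the paper, which states this lemma without proof by citing exactly the two results you invoke: Theorem~4.2 of \cite{Weis2001} for the maximal $L^p$-regularity of the Dirichlet Laplacian on $L^q$, and Corollary~1.14 of \cite{Lunardi2018} for the trace embedding into $C([0,T];(L^q,\dot H^{2,q})_{1-1/p,p})$. Your additional verification of the $\mathcal{R}$-sectoriality hypothesis and the identification of $D(-\Delta)$ with $\dot H^{2,q}$ is a correct filling-in of details the paper leaves implicit.
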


For the stochastic convolution operator \( S_1 \), defined by \cref{eq:S1-def},
we present the following fundamental stochastic maximal $ L^p$-regularity estimate;
see Theorems 1.1 and 1.2 in \cite{Neerven2012}.
\begin{lemma}
\label{lem:S1-regu}
Let \( p \in (2,\infty) \) and \( q \in [2,\infty) \). Then for any \( g \in L_{\mathbb{F}}^p(\Omega \times (0,T); \gamma(H, L^q)) \), the following inequality holds:
\begin{align*}
  \norm{S_1 g}_{L^p(\Omega; C([0,T]; (L^q, \dot{H}^{2,q})_{1/2-1/p,p}))} +
  \norm{S_1 g}_{L^p(\Omega \times (0,T); \dot{H}^{1,q})}                                 
  \leqslant c \norm{g}_{L^p(\Omega \times (0,T); \gamma(H, L^q))}.
\end{align*}
\end{lemma}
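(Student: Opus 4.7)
The plan is to reduce the claim to the abstract stochastic maximal $L^p$-regularity framework of van Neerven, Veraar, and Weis; the statement is essentially a direct specialization of Theorems~1.1 and 1.2 of \cite{Neerven2012}, and the work lies in verifying the structural hypotheses in the concrete setting at hand. First, I would observe that for $q \in [2,\infty)$ the Lebesgue space $L^q = L^q(\mathcal{O})$ is a UMD Banach space of martingale type $2$, which is exactly the framework in which the abstract theory operates. Second, I would confirm that the Dirichlet Laplacian $-\Delta$ on $L^q(\mathcal{O})$, with $\mathcal{O}$ a bounded convex domain with $C^2$-boundary, admits a bounded $H^\infty$-functional calculus of angle strictly less than $\pi/2$; this is classical for smooth bounded domains and is precisely the hypothesis needed to invoke \cite{Neerven2012}. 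In our setting, the fractional-power domains $D((-\Delta)^{\alpha/2}) \subset L^q$ coincide with the spaces $\dot H^{\alpha, q}$ introduced in \cref{sec:pre}, so the conclusions of the abstract theory translate directly into the present notation.

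With these ingredients in place, Theorem~1.1 of \cite{Neerven2012} yields the space-time estimate
\begin{equation*}
\norm{S_1 g}_{L^p(\Omega \times (0,T); \dot H^{1,q})} \leqslant c \norm{g}_{L^p(\Omega \times (0,T); \gamma(H, L^q))},
\end{equation*}
which reflects the half-derivative gain in space characteristic of stochastic convolutions against the heat semigroup (half of the full gain obtained in the deterministic case of \cref{lem:S0-regu}, owing to the It\^{o} isometry). Theorem~1.2 of \cite{Neerven2012} then supplies the pathwise-uniform bound with values in the trace space $(L^q, \dot H^{2,q})_{1/2 - 1/p, p}$; the appearance of the interpolation exponent $1/2 - 1/p$, in place of the deterministic exponent $1 - 1/p$ from \cref{lem:S0-regu}, again reflects the same stochastic half-derivative shift.

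The main point to verify, rather than a genuine obstacle, is that the constants in the abstract estimates can be chosen independently of $T$ on bounded intervals and that the homogeneous Dirichlet boundary condition is correctly encoded in the fractional-power scale, so that the trace space arising in \cite{Neerven2012} indeed matches $(L^q, \dot H^{2,q})_{1/2-1/p,p}$ as written here. Both points are handled by standard scaling and real-interpolation arguments and are already implicit in the references, so the proof amounts to assembling well-established ingredients rather than developing new estimates.
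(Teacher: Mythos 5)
Your proposal is correct and matches the paper's treatment: the paper offers no independent proof of this lemma, simply citing Theorems~1.1 and 1.2 of \cite{Neerven2012}, exactly the two results you invoke for the space-time bound in $\dot H^{1,q}$ and the pathwise-uniform bound in the trace space $(L^q,\dot H^{2,q})_{1/2-1/p,p}$, respectively. Your additional verification of the hypotheses (UMD/type-2 structure of $L^q$ for $q\geqslant 2$, bounded $H^\infty$-calculus of the Dirichlet Laplacian of angle below $\pi/2$, identification of the fractional-power domains with $\dot H^{\alpha,q}$) is sound and goes slightly beyond what the paper records; note only that the $T$-independence of the constant is not needed here, since the paper's generic constant $c$ is allowed to depend on $T$.
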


The operator $ F $, defined by \cref{eq:F-def}, possesses
the following growth and Lipschitz continuity properties.
\begin{lemma}
  \label{lem:F-interp}
  Let \( q \in [2, \infty) \). The following inequalities are valid:
  \begin{align}
    & \norm{F(v)}_{\gamma(H, L^q)} \leqslant c \left( 1 + \norm{v}_{L^q} \right), && \text{for all } v \in L^q; \label{eq:F-Lq} \\
    & \norm{F(v)}_{\gamma(H, \dot H^{1,q})} \leqslant c \left( 1 + \norm{v}_{\dot H^{1,q}} \right), && \text{for all } v \in \dot H^{1,q}; \label{eq:F-H1q} \\
    & \norm{F(v)}_{\gamma(H, \dot H^{s_1,q})} \leqslant c \left( 1 + \norm{v}_{\dot H^{s_2,q}} \right), && \text{for all } v \in \dot H^{s_2,q}, \text{ where } 0 \leqslant s_1 < s_2 < 1; \label{eq:F-interp} \\
    & \norm{F(u) - F(v)}_{\gamma(H, L^q)} \leqslant c \norm{u - v}_{L^q}, && \text{for all } u, v \in L^q. \label{eq:F-lips}
  \end{align}
\end{lemma}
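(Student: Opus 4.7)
The plan is to reduce all four estimates to pointwise bounds on suitable $\ell^2$-square functions of the $f_n$'s, via the square-function characterization of the $\gamma$-radonifying norm with values in $L^q$: for $q\in[2,\infty)$ and any $T\in\gamma(H,L^q)$ with $Th_n=g_n$, one has
$$
\norm{T}_{\gamma(H,L^q)}
\approx
\Big\| \big( \textstyle\sum_n \abs{g_n}^2 \big)^{1/2} \Big\|_{L^q}.
$$
Applied with $g_n=f_n(\cdot,v(\cdot))$, the hypotheses \cref{eq:fn0,eq:fn-fnx-l2,eq:fny-l2} translate directly into pointwise bounds that I then push into $L^q$.

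For \cref{eq:F-Lq}, \cref{eq:fn-fnx-l2} immediately yields $\big(\textstyle\sum_n\abs{f_n(v)}^2\big)^{1/2}\leqslant\sqrt{C_F}(1+\abs{v})$ pointwise, and taking the $L^q$-norm closes the estimate. For \cref{eq:F-lips}, I would represent the difference as $f_n(v)-f_n(u)=(v-u)\int_0^1\partial_y f_n(\cdot,u+t(v-u))\,dt$, square, apply Jensen's inequality in $t$, and sum over $n$ using \cref{eq:fny-l2} to obtain the pointwise bound $\big(\textstyle\sum_n\abs{f_n(v)-f_n(u)}^2\big)^{1/2}\leqslant\sqrt{C_F}\abs{v-u}$, which integrates to \cref{eq:F-lips}.

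For \cref{eq:F-H1q}, I would first verify that $f_n(v)\in W_0^{1,q}$ whenever $v\in\dot H^{1,q}=W_0^{1,q}$: the trace of $v$ vanishes on $\partial\mathcal{O}$, and \cref{eq:fn0} forces each $f_n(\cdot,0)\equiv 0$ on $\partial\mathcal{O}$, so the composition $f_n(v)$ inherits the zero boundary condition. Using $\norm{\cdot}_{\dot H^{1,q}}\approx\norm{\nabla\cdot}_{L^q}$, it suffices to control $\norm{\nabla F(v)}_{\gamma(H,L^q(\mathcal{O};\mathbb R^3))}$. The chain rule $\nabla f_n(\cdot,v)=(\nabla_x f_n)(\cdot,v)+(\partial_y f_n)(\cdot,v)\,\nabla v$, together with the vector-valued analogue of the square-function characterization and the bounds \cref{eq:fn-fnx-l2,eq:fny-l2}, gives $\big(\textstyle\sum_n\abs{\nabla f_n(v)}^2\big)^{1/2}\leqslant c(1+\abs{v}+\abs{\nabla v})$, whose $L^q$-norm is dominated by $c(1+\norm{v}_{\dot H^{1,q}})$.

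The main obstacle is \cref{eq:F-interp}, since there is no direct pointwise analogue of the square-function characterization at the fractional level in $\dot H^{s,q}$. My plan is to obtain it by real interpolation between the two endpoints already established: \cref{eq:F-lips} (Lipschitz behaviour of $F$ from $L^q$ to $\gamma(H,L^q)$) and \cref{eq:F-H1q} (affine growth of $F$ from $\dot H^{1,q}$ to $\gamma(H,\dot H^{1,q})$). Using \cref{lem:real-complex} I pick $\theta\in(s_1,s_2)$ so that $\dot H^{s_2,q}\hookrightarrow(L^q,\dot H^{1,q})_{\theta,q}$ and $(L^q,\dot H^{1,q})_{\theta,q}\hookrightarrow\dot H^{s_1,q}$. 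For any decomposition $v=v_0+v_1$, the splitting $F(v)=F(v_1)+[F(v)-F(v_1)]$ combined with \cref{eq:F-lips,eq:F-H1q} yields
$$
K(t,F(v))\leqslant c\big(\norm{v_0}_{L^q}+t\norm{v_1}_{\dot H^{1,q}}\big)+ct
$$
for the $K$-functional relative to $(\gamma(H,L^q),\gamma(H,\dot H^{1,q}))$, whence $K(t,F(v))\leqslant c\,K(t,v)+ct$. The extra $ct$ produced by the affine (not purely linear) growth in \cref{eq:F-H1q} is the delicate point; it is absorbed by splitting the interpolation integral at $t=1$, using the uniform bound $K(t,F(v))\leqslant\norm{F(v)}_{\gamma(H,L^q)}\leqslant c(1+\norm{v}_{L^q})$ from \cref{eq:F-Lq} on $t>1$ together with the integrability of $t^{(1-\theta)q-1}$ near zero for $\theta<1$. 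Finally, the standard embedding $(\gamma(H,E_0),\gamma(H,E_1))_{\theta,q}\hookrightarrow\gamma(H,(E_0,E_1)_{\theta,q})$ for UMD targets, composed with the second embedding of \cref{lem:real-complex} quoted above, converts the interpolation estimate into the desired bound in $\gamma(H,\dot H^{s_1,q})$.
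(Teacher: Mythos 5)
Your proposal is correct and follows essentially the same strategy as the paper: the three estimates \cref{eq:F-Lq,eq:F-H1q,eq:F-lips} via pointwise square-function bounds drawn from \cref{eq:fn0,eq:fn-fnx-l2,eq:fny-l2}, and \cref{eq:F-interp} by nonlinear real interpolation between the Lipschitz endpoint \cref{eq:F-lips} and the affine-growth endpoint \cref{eq:F-H1q}, finished off with \cref{lem:real-complex}. The only substantive difference is where the interpolation is carried out. The paper truncates to the Gaussian sums $A_N v=\sum_{n\leqslant N}\gamma_n f_n(v)$, applies the nonlinear interpolation scheme of Tartar's Lemma~28.1 to $v\mapsto A_Nv$ between the Bochner spaces $L^q(\Omega_\gamma;L^q)$ and $L^q(\Omega_\gamma;\dot H^{1,q})$ (where the embedding of the real interpolation space of the Bochner couple into the Bochner space of the interpolated target is elementary), and then passes to the limit $N\to\infty$ via Fatou; you instead run the $K$-functional computation directly on $F$ in the couple $(\gamma(H,L^q),\gamma(H,\dot H^{1,q}))$ --- your splitting of the interpolation integral at $t=1$ to absorb the inhomogeneous term $ct$ is precisely the content of Tartar's lemma written out by hand --- which avoids the truncation and limiting step but shifts the burden onto the embedding $(\gamma(H,E_0),\gamma(H,E_1))_{\theta,q}\hookrightarrow\gamma(H,(E_0,E_1)_{\theta,q})$. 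That embedding does hold in the present setting (embed $\gamma(H,E_i)$ isometrically into $L^2(\Omega_\gamma;E_i)$, use Minkowski's integral inequality, valid since $q\geqslant 2$, to obtain $(L^2(\Omega_\gamma;E_0),L^2(\Omega_\gamma;E_1))_{\theta,q}\hookrightarrow L^2(\Omega_\gamma;(E_0,E_1)_{\theta,q})$, and identify the resulting Gaussian sum with a $\gamma$-radonifying operator as in Theorem~9.1.19 of the cited reference), but it is the one step you assert rather than prove, and it deserves the explicit chain of embeddings that the paper spells out for the $A_N$; everything else in your argument, including the boundary-trace verification for \cref{eq:F-H1q} and the treatment of the affine endpoint, matches the paper's proof.
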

\begin{proof}
  We focus our demonstration on \cref{eq:F-interp}; the proofs of \cref{eq:F-Lq}, \cref{eq:F-H1q},
  and \cref{eq:F-lips} follow similar, yet more straightforward, lines of reasoning.
  For the definition of $ \gamma $-radonifying operators, we direct the reader to Section~9.1 of \cite{HytonenWeis2017}.
  Consider $ \Omega_{\gamma} $ as a probability space supporting a sequence $ (\gamma_n)_{n\in\mathbb{N}} $
  of independent standard Gaussian random variables. Let $ \mathbb{E}_\gamma $ denote the expectation
  operator with respect to the probability space $ \Omega_{\gamma} $. The subsequent argument is
  structured in two main steps.

 \medskip\noindent\textbf{Step 1.}
We proceed to demonstrate that for any \( \theta \in (0,1) \) and \( N \in \mathbb N_{>0} \), the following inequality holds:
\begin{equation} 
  \label{eq:AN}
  \norm{A_Nv}_{L^2(\Omega_\gamma; (L^q, \dot H^{1,q})_{\theta,q})}
  \leqslant c \big( 1 + \norm{v}_{(L^q, \dot H^{1,q})_{\theta,q}} \big),
  \quad \forall v \in (L^q, \dot H^{1,q})_{\theta,q},
\end{equation}
where
\begin{align*}
  A_N v := \sum_{n=0}^N \gamma_n f_n(v), \quad \forall v \in L^q.
\end{align*}
For any \( v \in \dot{H}^{1,q} \), we deduce that
\begin{align*}
  \norm{A_Nv}_{L^q(\Omega_\gamma; \dot H^{1,q})}
  & \leqslant
  c\bigg[
    \mathbb E_\gamma \int_\mathcal O \absB{
      \sum_{n = 0}^N \gamma_n \nabla f_n(v)
    }^q \, \mathrm{d}x
  \bigg]^{1/q} \\
  & \stackrel{\mathrm{(i)}}{\leqslant}
  c\bigg[
    \int_\mathcal O \Big(
      \mathbb E_\gamma \absB{ \sum_{n=0}^N \gamma_n  \nabla f_n(v)}^2
    \Big)^{q/2} \, \mathrm{d}x
  \bigg]^{1/q}\\
  & = c\bigg[
    \int_\mathcal O \Big(
      \sum_{n=0}^N \abs{\nabla f_n(v)}^2
    \Big)^{q/2} \, \mathrm{d}x
  \bigg]^{1/q} \\
  & \stackrel{\mathrm{(ii)}}{\leqslant}
  c\bigg[
    \int_\mathcal O \Big(
      1 + \abs{v}^2 + \abs{\nabla v}^2
    \Big)^{q/2} \, \mathrm{d}x
  \bigg]^{1/q}, 
\end{align*}
where step \((\mathrm{i})\) uses the famous Kahane-Khintchine inequality (see Theorem 6.2.6 from \cite{HytonenWeis2017}),
 and step \((\mathrm{ii})\) relies on conditions \cref{eq:fny-l2,eq:fn-fnx-l2}.
Using the standard inequality
\[
  \left( \int_{\mathcal{O}} |v|^q + |\nabla v|^q \, \mathrm{d}x \right)^{1/q} \leqslant c \norm{v}_{\dot{H}^{1,q}},
\]
we further deduce that
\begin{align*}
  \norm{A_Nv}_{L^q(\Omega_\gamma; \dot H^{1,q})}
  \leqslant c(1 + \norm{v}_{\dot H^{1,q}}).
\end{align*}
Analogously, employing the condition \cref{eq:fn-fnx-l2} yields
\[
  \norm{A_Nv}_{L^q(\Omega_\gamma; L^q)} \leqslant c(1 + \norm{v}_{L^q}), \quad \forall v \in L^q,
\]
and utilizing the condition \cref{eq:fny-l2} results in
\[
  \norm{A_Nu - A_Nv}_{L^q(\Omega_\gamma;L^q)} \leqslant c\norm{u-v}_{L^q}, \quad \forall u, v \in L^q.
\]
Thus, following the methodology outlined in Lemma 28.1 of \cite{Tartar2007}, one obtains
\[
  \norm{A_Nv}_{(L^q(\Omega_\gamma; L^q), L^q(\Omega_\gamma; \dot{H}^{1,q}))_{\theta,q}} \leqslant c(1 + \norm{v}_{(L^q, \dot{H}^{1,q})_{\theta,q}}), \quad \forall v \in (L^q,\dot{H}^{1,q})_{\theta,q}.
\]
Combining this result with the continuous embedding property
\[
  \left( L^q(\Omega_\gamma;L^q), L^q(\Omega_\gamma; \dot{H}^{1,q}) \right)_{\theta,q} \hookrightarrow L^q\left(\Omega_\gamma; (L^q, \dot{H}^{1,q})_{\theta,q} \right),
\]
which can be easily verified by definition, yields
\[
  \norm{A_Nv}_{L^q(\Omega_\gamma; (L^q, \dot{H}^{1,q})_{\theta,q})} \leqslant c \big( 1 + \norm{v}_{(L^q, \dot{H}^{1,q})_{\theta,q}} \big), \quad \forall v \in (L^q, \dot{H}^{1,q})_{\theta,q}.
\]
Applying the Kahane-Khintchine inequality once again, we arrive at the assertion in \cref{eq:AN}.

\medskip\noindent\textbf{Step 2.}
For any \( \theta \in (0,1) \) and \( v \in (L^q, \dot{H}^{1,q})_{\theta,q} \),
we consider the equality
\[
\norm{F(v)}_{\gamma(H,(L^q,\dot H^{1,q})_{\theta,q})}
= \Big\lVert\lim_{N \to \infty} A_Nv\Big\rVert_{L^2(\Omega_\gamma; (L^q,\dot H^{1,q})_{\theta,q})},
\]
which follows directly from the definition of \(F\) as stated in (\ref{eq:F-def}) and is
justified by Theorem 9.1.19 in \cite{HytonenWeis2017}. Using the inequality (\ref{eq:AN}),
we apply Fatou's Lemma to deduce that
\[
\norm{F(v)}_{\gamma(H, (L^q, \dot H^{1,q})_{\theta,q})} \leqslant c(1 + \norm{v}_{(L^q, \dot H^{1,q})_{\theta,q}}).
\]
Consequently, the desired inequality (\ref{eq:F-interp}) is established through
the application of Lemma \ref{lem:real-complex}.
This concludes the proof.
\end{proof}

\begin{remark}
  As established by Theorem 4.36 in \cite{Lunardi2018}, the interpolation space \( (L^2, \dot H^{1,2})_{s,2} \) coincides with \( \dot H^{s,2} \), sharing equivalent norms for all \( s \in (0,1) \). Consequently, we refine the inequality \cref{eq:F-interp} to the following form:
  \[
    \| F(v) \|_{\gamma(H, \dot H^{s,2})} \leqslant c(1 + \| v \|_{\dot H^{s,2}}),
    \quad\text{for all $ v \in \dot H^{s,2} $, where $s \in (0,1)$}.
  \]
\end{remark}

It is well known that the space $ L^2 $ admits
an orthonormal basis $(\phi_n)_{n \in \mathbb{N}}$, consisting of
eigenvectors of the negative Laplace operator $ -\Delta $ with homogeneous
Dirichlet boundary conditions. This basis $(\phi_n)_{n \in \mathbb{N}}$ forms a
complete orthogonal system in the space $ \dot H^{s,2} $ for all $ s \in (0,\infty) $.

\begin{lemma}
  For each $ n \in \mathbb N $ and $ y_0^{(n)} \in \mathrm{span}\{\phi_0,\ldots,\phi_n\} $,
  which is deterministic, the stochastic Allen-Cahn equation
  \begin{subequations}
    \label{eq:yn}
    \begin{numcases}{}
      \mathrm{d}y^{(n)}(t) = \left[ \Delta y^{(n)} + y^{(n)} - (y^{(n)})^3 \right](t) \, \mathrm{d}t
      + F(y^{(n)}(t)) \, \mathrm{d}W_H(t), \quad 0 \leqslant t \leqslant T, \\
      y^{(n)}(0) = y_0^{(n)}
    \end{numcases}
  \end{subequations}
  has a unique mild solution. Moreover, for all $ p,q \in (2,\infty) $, the following inequality holds:
  \begin{equation}
    \norm{y^{(n)}}_{L_\mathbb F^p(\Omega\times(0,T);L^q)} \leqslant
    c (1 + \norm{y_0^{(n)}}_{L^q}).
    \label{eq:yn-esti1}
  \end{equation}
\end{lemma}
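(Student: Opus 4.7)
The argument splits naturally into two components: constructing the mild solution, and then proving the moment bound \cref{eq:yn-esti1}. Since $y_0^{(n)}$ lies in the finite-dimensional span of smooth eigenfunctions, it belongs to $L^\infty \cap \dot H^{s,2}$ for every $s \geq 0$, and in particular to $L^q$ for every $q \in [2,\infty)$. A standard Picard iteration in $L_{\mathbb F}^p(\Omega; C([0,\tau];L^q))$ for some small $\tau>0$, combined with a stopping-time cutoff that renders the cubic nonlinearity globally Lipschitz, the growth and Lipschitz bounds on $F$ from \cref{lem:F-interp}, and the stochastic maximal regularity of \cref{lem:S1-regu}, produces a unique local mild solution. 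Global existence on $[0,T]$ then follows once the a priori bound below is in place.

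\textbf{A priori $L^q$ moment bound.} The key step is to apply the UMD-space It\^o formula of \cite{Weis_Ito_2008} to the functional $\Phi(v)=\norm{v}_{L^q}^q$. Integration by parts turns the Laplacian contribution into $-q(q-1)\int|y^{(n)}|^{q-2}|\nabla y^{(n)}|^2\,\mathrm dx\leqslant 0$; the cubic nonlinearity produces the coercive term $-q\int|y^{(n)}|^{q+2}\,\mathrm dx$; the linear reaction produces $q\norm{y^{(n)}}_{L^q}^q$; and the It\^o correction is bounded by $c\int|y^{(n)}|^{q-2}(1+|y^{(n)}|^2)\,\mathrm dx$ thanks to \cref{eq:fn-fnx-l2}. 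Young's inequality absorbs the linear reaction term and the It\^o correction into the coercive cubic term up to an additive constant. Taking expectations and applying Gronwall then yields a uniform estimate on $\mathbb E\,\norm{y^{(n)}(t)}_{L^q}^q$. To lift this to $p$-th moments for arbitrary $p\in(2,\infty)$, I would instead apply It\^o's formula to $\Phi_p(v)=(\eta+\norm{v}_{L^q}^q)^{p/q}$ for small $\eta>0$, use the Burkholder-Davis-Gundy inequality on the resulting martingale term, and close by the same Young-and-Gronwall argument; this gives a pathwise-uniform bound strictly stronger than \cref{eq:yn-esti1}.

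\textbf{Main obstacle.} The principal technical difficulty is the rigorous justification of It\^o's formula for $\Phi$, whose second Fr\'echet derivative involves the factor $|v|^{q-2}$, which is unbounded near zero when $q<2$ and merely H\"older continuous when $2\leqslant q<3$. I would circumvent this by approximating $\Phi$ by the smooth functional $\Phi_\varepsilon(v)=\int(|v|^2+\varepsilon)^{q/2}\,\mathrm dx$, applying the Banach-space It\^o formula to $\Phi_\varepsilon$, and passing to the limit $\varepsilon\downarrow 0$ using the uniform controls supplied by the coercive cubic term. A Galerkin truncation onto $\mathrm{span}\{\phi_0,\ldots,\phi_m\}$ offers an equivalent alternative: on this finite-dimensional subspace $\Phi$ reduces to a smooth function of real coordinates so classical It\^o applies directly, and the smoothness of $y_0^{(n)}$ provides enough compactness in $m\to\infty$ via the a priori estimates above.
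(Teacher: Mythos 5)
Your moment bound is, at its core, the same computation as the paper's: It\^o's formula applied to a power of the $L^q$-norm, with the Laplacian contributing a nonpositive gradient term, the cubic nonlinearity contributing $-\|y^{(n)}\|_{L^{q+2}}^{q+2}$, the It\^o correction controlled via \cref{eq:fn-fnx-l2} together with H\"older and Young, and Gronwall closing the estimate. The only cosmetic difference is that the paper applies It\^o directly to $\|v\|_{L^q}^p$, so the $p$-th moment comes out in one pass and only expectations are taken (no BDG is needed for \cref{eq:yn-esti1}), whereas you first treat $\|v\|_{L^q}^q$ and then lift to $p$-th moments; both work, and your BDG step would even yield a pathwise-uniform bound that the paper does not claim in this lemma.

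Where you genuinely diverge is on existence and on the justification of It\^o's formula, and here two points need tightening. First, a stopping-time cutoff measured in the $C([0,\tau];L^q)$ norm does not render $v\mapsto v^3$ globally Lipschitz into any Lebesgue space: boundedness of $\|v\|_{L^q}$ gives no control of $\|v^3\|_{L^r}$ for any $r$. To run a Picard iteration you must set up the fixed point in a space controlling $L^{3q}$ (or a Sobolev space embedding into it) and exploit the smoothing of $e^{t\Delta}$; the paper sidesteps this entirely by invoking the variational well-posedness theory of \cite{LiuWei2013}, which handles the locally monotone drift $-y^3$ directly. Second --- and this is the actual purpose of the paper's Step~1 --- the UMD It\^o formula of \cite{Weis_Ito_2008} is applied to the strong formulation, so the drift $\Delta y^{(n)}+y^{(n)}-(y^{(n)})^3$ must be an $L^q$-valued integrable process; the paper therefore runs a maximal-regularity bootstrap (alternating \cref{lem:S0-regu} and \cref{lem:S1-regu} with Sobolev embeddings) to reach $y^{(n)}\in L^p_{\mathbb F}(\Omega\times(0,T);\dot H^{2,q})$ for all $p,q\in(2,\infty)$ before touching It\^o. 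Your regularization of the functional $\Phi$ addresses the smoothness of $\Phi$ but not this issue; your Galerkin alternative does address it, but the passage $m\to\infty$ then requires the stability and identification argument that the paper carries out separately (it is essentially Step~1 of the proof of \cref{thm:y-regu}). Your observation that $|v|^{q-2}$ is only H\"older continuous for $q\in(2,3)$ is a fair point that the paper's citation of \cite{Weis_Ito_2008} glosses over.
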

\begin{proof}
  We split the proof into the following two steps.

  \medskip\noindent\textbf{Step 1.} Define \( G^{(n)}(t) := e^{t\Delta}y_0^{(n)} \) for \( t \in [0,T] \).
  Given that \( y_0^{(n)} \in \dot H^{2,q} \) for every \( q \in (2,\infty) \),
  it follows from Lemma \ref{lem:etDelta} with \( \alpha=\beta=2 \) that
  \begin{equation}
    \label{eq:Gn-regu}
    G^{(n)} \in L^p(0,T;\dot H^{2,q}), \quad \forall p,q \in (2,\infty).
  \end{equation}
  In view of the estimates for $ F $ in \cref{eq:F-H1q,eq:F-lips} and the fact that
  \( y_0^{(n)} \in \dot H^{1,2} \) is deterministic,
  we use \cite[Theorem~1.1]{LiuWei2013} (see also, \cite[Lemma~2.3 and Theorem~3.1]{LiuQiao2021}) to
  deduce that \cref{eq:yn} has a unique mild solution \( y^{(n)} \) and
  \begin{equation}
    \label{eq:miss0}
    y^{(n)} \in L_\mathbb{F}^p( \Omega\times(0,T);\dot H^{1,2} ),
    \quad \forall p \in (2,\infty).
  \end{equation}
  The continuous embedding of \( \dot H^{1,2} \) in \( L^6 \) then leads to
  \[
    y^{(n)} \in L_\mathbb{F}^p(\Omega\times(0,T);L^6), 
    \quad \forall p \in (2,\infty),
  \]
  which in turn implies that
  \( y^{(n)} - (y^{(n)})^3 \in L_\mathbb{F}^p(\Omega\times(0,T);L^2) \)
  for all \( p \in (2,\infty) \).
  Applying Lemma \ref{lem:S0-regu}, we obtain
  \begin{equation}
    \label{eq:miss1}
    S_0(y^{(n)}-(y^{(n)})^3) \in L_\mathbb{F}^p(\Omega\times(0,T); \dot H^{2,2}),
    \quad \forall p \in (2,\infty).
  \end{equation}
  Since \eqref{eq:F-H1q} and \eqref{eq:miss0} ensure that
  \( F(y^{(n)}) \in L_\mathbb{F}^p(\Omega\times(0,T);\gamma(H,\dot H^{1,2})) \)
  for every \( p \in (2,\infty) \), Lemma \ref{lem:S1-regu} allows us to conclude that
  \begin{equation}
    \label{eq:miss2}
    S_1F(y^{(n)}) \in L_\mathbb{F}^p(\Omega\times(0,T); \dot H^{2,2}),
    \quad \forall p \in (2,\infty).
  \end{equation}
  Considering the mild solution representation
  \[
    y^{(n)} = G^{(n)} + S_0(y^{(n)}-(y^{(n)})^3) + S_1F(y^{(n)}),
  \]
  and combining \eqref{eq:Gn-regu}, \eqref{eq:miss1}, and \eqref{eq:miss2}, we deduce that
  \[
    y^{(n)} \in L_\mathbb{F}^p(\Omega\times(0,T);\dot H^{2,2}),
    \quad \forall p \in (2,\infty).
  \]
  The continuous embeddings of \( \dot H^{2,2} \) into \( \dot H^{1,6} \) and \( \dot H^{2,2} \) into
  \( L^q \) for all \( q \in (2,\infty) \) yield
  \begin{align*}
& y^{(n)} \in L_\mathbb{F}^p(\Omega\times(0,T); L^q),
\quad \forall p,q \in (2,\infty), \\
& y^{(n)} \in L_\mathbb{F}^p(\Omega\times(0,T);\dot H^{1,6}),
\quad \forall p \in (2,\infty).
  \end{align*}
  Hence, \cref{eq:miss1,eq:miss2} can be refined to
  \begin{align}
    & S_0(y^{(n)}-(y^{(n)})^3) \in L_\mathbb{F}^p(\Omega\times(0,T);\dot H^{2,q}),
    \quad \forall p,q \in (2,\infty),
    \label{eq:790} \\
    & S_1F(y^{(n)}) \in L_\mathbb{F}^p(\Omega\times(0,T);\dot H^{2,6}),
    \quad \forall p \in (2,\infty). \notag
  \end{align}
  Combining these refined regularity results with \eqref{eq:Gn-regu}, we arrive at
  \[
    y^{(n)} \in L_\mathbb{F}^p(\Omega\times(0,T);\dot H^{2,6}),
    \quad \forall p \in (2,\infty),
  \]
  which, through the continuous embedding of \( \dot H^{2,6} \) into \( \dot H^{1,q} \) for all \( q \in (2,\infty) \), implies that
  \[
    y^{(n)} \in L_\mathbb{F}^p(\Omega\times(0,T);\dot H^{1,q}),
    \quad \forall p,q \in (2,\infty).
  \]
  This regularity result allows us to further improve \eqref{eq:miss2} to
  \[
    S_1F(y^{(n)}) \in L_\mathbb{F}^p(\Omega\times(0,T);\dot H^{2,q}),
    \quad \forall p,q \in (2,\infty),
  \]
  which, in conjunction with \eqref{eq:Gn-regu} and \eqref{eq:790}, yields
  \begin{equation}
    \label{eq:yn-regu}
    y^{(n)} \in L_\mathbb{F}^p(\Omega\times(0,T);\dot H^{2,q}),
    \quad \forall p,q \in (2,\infty).
  \end{equation}

  \medskip\noindent\textbf{Step 2.}
  Let $ p,q \in (2, \infty) $ be arbitrary but fixed.
  Considering the regularity assertion \cref{eq:yn-regu} and the fact, $ \mathbb P $-almost surely, 
  \[
    y^{(n)}(t) = y_0^{(n)} + \int_0^t \Delta y^{(n)}(s) + y^{(n)}(s) - (y^{(n)}(s))^3 \, \mathrm{d}s +
    \int_0^t F(y^{(n)}(s)) \, \mathrm{d}W_H(s), \quad t \in [0,T],
  \]
  we use It\^o's formula in UMD Banach spaces (see \cite[Theorem~2.4]{Weis_Ito_2008})
  to conclude that, for any $ t \in (0,T] $,
  \begin{align*} 
    & \mathbb E\Big[ \norm{y^{(n)}(t)}_{L^q}^p - \norm{y_0^{(n)}}_{L^q}^p \Big]  \\
    ={}
    & p\, \mathbb E \Biggl[
      \int_0^t \norm{y^{(n)}(s)}_{L^q}^{p-q} \int_\mathcal O
      \abs{y^{(n)}(s)}^{q-2} y^{(n)}(s) [\Delta y^{(n)}(s) + y^{(n)}(s) - (y^{(n)}(s))^3]
      \,\mathrm{d}x \, \mathrm{d}s
    \Biggr] \\
    & \quad {} + \frac{p(q-1)}2 \, \mathbb E\Biggl[
      \int_0^t \norm{y^{(n)}(s)}_{L^q}^{p-q} \, \int_\mathcal O
      \abs{y^{(n)}(s)}^{q-2} \sum_{k=0}^\infty \abs{f_k(y^{(n)}(s))}^2
      \,\mathrm{d}x \, \mathrm{d}s
    \Biggr] \\
    & \quad
    {} + \frac{p(p-q)}2 \, \mathbb E \Biggl\{
      \int_0^t \norm{y^{(n)}(s)}_{L^q}^{p-2q} \sum_{k=0}^\infty \bigg[
        \int_\mathcal O \abs{y^{(n)}(s)}^{q-2} y^{(n)}(s) f_k(y^{(n)}(s)) \, \mathrm{d}x
      \bigg]^2 \, \mathrm{d}s                                                            
    \Biggr\} \\
    =:{} &
    I_1 + I_2 + I_3.
  \end{align*}
  Considering the inequality
  \begin{align*}
    & \int_{\mathcal O} \abs{y^{(n)}(s)}^{q-2} y^{(n)}(s) [\Delta y^{(n)}(s) - (y^{(n)}(s))^3] \, \mathrm{d}x \\
    ={} & 
    -(q-1) \int_{\mathcal O} \abs{y^{(n)}(s)}^{q-2} \abs{\nabla y^{(n)}(s)}^2 \, \mathrm{d}x -
    \norm{y^{(n)}(s)}_{L^{q+2}}^{q+2} \\
    \leqslant{} & 0,
  \end{align*}
  we infer the bound for $I_1 $: 
  \[
    I_1 \leqslant p \, \mathbb E \bigg[ \int_0^t \norm{y^{(n)}(s)}_{L^q}^{p} \, \mathrm{d}s \bigg].
  \]
  For \( I_2 + I_3 \), using Hölder's inequality in conjunction with Young's inequality,
   and employing condition \cref{eq:fn-fnx-l2}, we derive the following bound:
  \[
    I_2 + I_3 \leqslant c \, \mathbb{E} \bigg[ \int_0^t 1 + \norm{y^{(n)}(s)}_{L^q}^p \, \mathrm{d}s \bigg].
  \]
  Combining the above bounds for $ I_1 $ and $ I_2 + I_3 $, we arrive at
  \begin{align*}
    \mathbb E \norm{y^{(n)}(t)}_{L^q}^p \leqslant
    \norm{y_0^{(n)}}_{L^q}^p +
    c \, \mathbb E \bigg[
      \int_0^t 1 + \norm{y^{(n)}(s)}_{L^q}^p \, \mathrm{d}s
    \bigg],
    \quad \forall t \in [0,T].
  \end{align*}
  An application of Gronwall’s inequality then yields
  $$
  \norm{y^{(n)}}_{C([0,T]; L^p(\Omega; L^q))} \leqslant c \big( 1 + \norm{y_0^{(n)}}_{L^q} \big),
  $$
  which implies the desired estimate \cref{eq:yn-esti1}. This completes the proof.
\end{proof}


\subsection{Proof of \texorpdfstring{\cref{thm:y-regu}}{}}
\label{ssec:regu}
We split the proof into the following three steps.

\medskip\noindent\textbf{Step 1.} We proceed to show that equation \cref{eq:model} has a unique mild solution $y$,
and that \cref{eq:y-lplq} is satisfied. Let $p \in (2, \infty)$ and $ q \in [4,\infty)$ be arbitrarily chosen.
For each $n \in \mathbb{N}$, denote by $y^{(n)}$ the mild solution of equation \cref{eq:yn},
where $y_0^{(n)}$ is defined as
\[
  y_0^{(n)} := \mathop{\mathrm{argmin}}_{w \in \mathrm{span}\{\phi_0,\ldots,\phi_n\}}
  \norm{y_0-w}_{L^{3q}}.
\]
For any $ m,n \in \mathbb N $, put $ e_{m,n} := y^{(m)} - y^{(n)} $. By definition, we infer
that
\[
  \mathrm{d}e_{m,n}(t) = (\Delta e_{m,n} + e_{m,n} + (y^{(n)})^3 - (y^{(m)})^3)(t)
  \, \mathrm{d}t + \big( F(y^{(m)}(t)) - F(y^{(n)}(t)) \big) \, \mathrm{d}W_H(t),
  \quad t \in [0,T].
\]
Observing that the integral over \( \mathcal O \) of 
\[
  \abs{e_{m,n}(t)}^{3q-2} e_{m,n}(t) [\Delta e_{m,n}(t) + (y^{(n)}(t))^3 - (y^{(m)}(t))^3] 
\]
is non-positive for all $ t \in [0,T] $,
and following the proof of \cref{eq:yn-esti1},
we can deduce that
\begin{equation}
  \label{eq:emn}
  \norm{e_{m,n}}_{L_\mathbb F^{3p}(\Omega\times(0,T);L^{3q})}
  \leqslant c \norm{y_0^{(m)}-y_0^{(n)}}_{L^{3q}}.
\end{equation}
On the other hand, from the fact that $ \dot H^{2,2} $ is dense
in $ L^{3q} $ and that $ (\phi_n)_{n \in \mathbb N} $
is a complete orthogonal system in $ \dot H^{2,2} $, we conclude that
$ (y_0^{(n)})_{n \in \mathbb N} $ is a Cauchy sequence in $ L^{3q} $ and converges to $ y_0 $ in $ L^{3q} $,
i.e.,
\begin{equation}
  \label{eq:y0n-y0}
  \lim_{n \to \infty} \norm{y_0^{(n)} - y_0}_{L^{3q}} = 0.
\end{equation}
Consequently, there exists a unique process $ y \in L_{\mathbb F}^{3p}(\Omega\times(0,T);L^{3q}) $
such that
\begin{equation}
  \label{eq:yn-y}
  \lim_{n \to \infty} y^{(n)} = y \quad \text{ in }
  L_{\mathbb F}^{3p}(\Omega\times(0,T);L^{3q}).
\end{equation}
For any $ n \in \mathbb N $, applying H\"older's inequality yields
\begin{align*}
              & \norm{(y^{(n)})^3 - y^3}_{L^p(\Omega\times(0,T);L^q)}    \\
  \leqslant{} & c\norm{y^{(n)}-y}_{L^{3p}(\Omega\times(0,T);L^{3q})}
  \big(
  \norm{y^{(n)}}_{L^{3p}(\Omega\times(0,T);L^{3q})}^2 +
  \norm{y}_{L^{3p}(\Omega\times(0,T);L^{3q})}^2
  \big).
\end{align*}
Therefore, using the convergence established in \cref{eq:yn-y},
we deduce that
\[
  \lim_{n \to \infty} (y^{(n)})^3 = y^3 \quad \text{ in }
  L_{\mathbb F}^{p}(\Omega\times(0,T);L^{q}),
\]
which consequently leads to
\[
  \lim_{n \to \infty} y^{(n)} - (y^{(n)})^3 = y-y^3 \quad \text{ in }
  L_{\mathbb F}^{p}(\Omega\times(0,T);L^{q}).
\]
Using \cref{lem:S0-regu}, this directly implies the following 
convergence of $ S_0(y^{(n)}-(y^{(n)})^3) $:
\begin{equation*}
  \lim_{n \to \infty} S_0(y^{(n)} - (y^{(n)})^3) =
  S_0( y-y^3 ) \quad \text{ in }
  L_{\mathbb F}^{p}(\Omega\times(0,T);L^{q}) 
  \cap L_{\mathbb F}^p(\Omega;C([0,T];L^q)).
\end{equation*}
Furthermore, invoking \cref{eq:F-lips,eq:yn-y} yields 
\[
  \lim_{n \to \infty} F(y^{(n)}) = F(y) \quad \text{ in }
  L_\mathbb F^{p}(\Omega\times(0,T);\gamma(H,L^{q})),
\]
and subsequently by \cref{lem:S1-regu} we infer the following convergence for
$ S_1F(y^{(n)}) $:
\begin{equation*}
  \lim_{n \to \infty} S_1F(y^{(n)}) = S_1F(y) \quad\text{ in }
  L_\mathbb F^p(\Omega\times(0,T);L^q) \cap L_{\mathbb F}^p(\Omega;C([0,T];L^q)).
\end{equation*}
Additionally, applying \cref{lem:etDelta} with $ \alpha=\beta = 0 $ and
using \cref{eq:y0n-y0}, we establish the following convergence for $ G^{(n)} $:
\begin{equation*}
  \lim_{n \to \infty} G^{(n)} = G \quad\text{ in }
  C([0,T];L^q),
\end{equation*}
where $ G $ is defined by \cref{eq:G-def}, and
$ G^{(n)}(t) := e^{t\Delta} y_0^{(n)} $ for each $ t \in [0,T] $.
Finally, combining the derived convergences of $ y^{(n)} $, $ G^{(n)} $,
$ S_0(y^{(n)}-(y^{(n)})^3) $, and $ S_1F(y^{(n)}) $, and recalling that
$ y^{(n)} = G^{(n)} + S_0(y^{(n)} - (y^{(n)})^3) + S_1 F(y^{(n)}) $,
we obtain by passing to the limit $ n \to \infty $ that
\[
  y = G + S_0(y-y^3) + S_1F(y)
  \quad\text{in } L_\mathbb F^p(\Omega\times(0,T);L^q) \cap L_{\mathbb F}^p(\Omega;C([0,T];L^q)).
\]
This confirms that $ y $ is a mild solution of \cref{eq:model},
and it is evident that \cref{eq:y-lplq} holds.

\medskip\noindent\textbf{Step 2.}
We now consider the uniqueness of the mild solution $ y $, which can be proved by a 
standard localization argument. Suppose $ \widetilde{y} \in L_{\mathbb{F}}^0(\Omega; C([0,T]; L^4)) $ is another mild solution of \cref{eq:model}, and define the difference $ e := y - \widetilde{y} $. Then $ e $ satisfies the stochastic evolution equation  
$$
\begin{cases}
\mathrm{d}e(t) = \left( \Delta e(t) + e(t) + \widetilde{y}^3(t) - y^3(t) \right) \mathrm{d}t \\ 
\qquad\qquad {} + \left( F(y(t)) - F(\widetilde{y}(t)) \right) \mathrm{d}W_H(t)
\quad\text{in $\dot H^{-1,2}$,} \quad t \in [0,T], \\
e(0) = 0.
\end{cases}
$$  
For $ m \geqslant 1 $, define the stopping time  
$$
\sigma_m := \inf \left\{ t \in [0,T] \colon \max\left( \|y(t)\|_{L^4}, \|\widetilde{y}(t)\|_{L^4} \right) \geqslant m \right\},
$$  
with $ \sigma_m = T $ if the set is empty. Set $ e_m(t) := e(t \wedge \sigma_m) $ for $ t \in [0,T] $. Then $ e_m $ satisfies the stopped equation  
$$
\begin{cases}
\mathrm{d}e_m(t) = \mathbbm{1}_{[0,\sigma_m]}(t) \left( \Delta e_m(t) + e_m(t) + \widetilde{y}^3(t \wedge \sigma_m) - y^3(t \wedge \sigma_m) \right) \mathrm{d}t \\ 
\qquad\qquad {} + \mathbbm{1}_{[0,\sigma_m]}(t) \left( F(y(t)) - F(\widetilde{y}(t)) \right) \mathrm{d}W_H(t)
\quad\text{in $\dot H^{-1,2}$},\quad t \in [0,T], \\ 
e_m(0) = 0.
\end{cases}
$$  
Applying Itô’s formula to $ \|e_m(t)\|_{L^2}^2 $ and taking expectation yields, for all $ t \in [0,T] $,  
$$
\begin{aligned}
\mathbb{E} \|e_m(t)\|_{L^2}^2 &= 2\mathbb{E} \bigg[ \mathbbm{1}_{[0,\sigma_m]}(s) \big\langle \Delta e_m(s) + e_m(s) + \widetilde{y}^3(s) - y^3(s), \,  e_m(s) \big\rangle \, \mathrm{d}s \bigg] \\ 
&\quad + \mathbb{E} \bigg[ \int_0^t \mathbbm{1}_{[0,\sigma_m]}(s) \| F(y(s)) - F(\widetilde{y}(s)) \|_{\gamma(H, L^2)}^2 \, \mathrm{d}s \bigg],
\end{aligned}
$$  
where $ \langle \cdot, \cdot \rangle $ denotes the duality pairing between $ \dot H^{-1,2} $ and $ \dot H^{1,2} $.  Since  
$$
\mathbbm{1}_{[0,\sigma_m]}(s) \big\langle \Delta e_m(s) + e_m(s) + \widetilde{y}^3(s) - y^3(s), \, e_m(s) \big\rangle \leqslant \mathbbm{1}_{[0,\sigma_m]}(s) \|e_m(s)\|_{L^2}^2,
$$  
it follows that  
$$
\mathbb{E} \|e_m(t)\|_{L^2}^2 \leqslant \mathbb{E} \bigg[ \int_0^t \mathbbm{1}_{[0,\sigma_m]}(s) \left( \|e_m(s)\|_{L^2}^2 + \| F(y(s)) - F(\widetilde{y}(s)) \|_{\gamma(H, L^2)}^2 \right) \mathrm{d}s \bigg].
$$  
Using the Lipschitz condition \cref{eq:F-lips} of $ F $, we deduce  
$$
\mathbb{E} \|e_m(t)\|_{L^2}^2 \leqslant c \mathbb{E} \int_0^t \|e_m(s)\|_{L^2}^2 \, \mathrm{d}s, \quad \forall t \in [0,T].
$$  
Gronwall's inequality then yields  
$$
\sup_{t \in [0,T]} \mathbb{E} \|e_m(t)\|_{L^2}^2 = 0, \quad \forall m \geqslant 1.
$$  
Since $ e_m \in L_{\mathbb{F}}^0(\Omega; C([0,T]; L^4)) $, this implies $ \|e_m\|_{C([0,T];L^4)} = 0 $ holds $\mathbb{P}$-almost surely. Consequently,  
$$
y = \widetilde{y} \quad \text{in } C([0,\sigma_m];L^4) \text{ holds } \mathbb{P}\text{-almost surely, } \forall m \geqslant 1.
$$  
As $ y, \widetilde{y} \in L_{\mathbb{F}}^0(\Omega; C([0,T]; L^4)) $, we have $ \lim_{m \to \infty} \mathbb{P}(\sigma_m = T) = 1 $.  
Thus, $ y = \widetilde{y} $ in $ C([0,T];L^4) $ holds $\mathbb{P}$-almost surely, proving uniqueness for the mild solution to \cref{eq:model}.

\medskip\noindent\textbf{Step 3.} 
Fix any $ p,q \in (2,\infty) $.
By the regularity assertion in \cref{eq:y-lplq} and the growth estimate \cref{eq:F-Lq} for $ F $,
we deduce that $ y-y^3 $ belongs to $ L_\mathbb F^p(\Omega\times(0,T); L^q) $
and $ F(y) $ is an element of $ L_\mathbb F^p(\Omega\times(0,T); \gamma(H,L^q)) $.
Hence, the application of \cref{lem:S0-regu}, in conjunction with \cref{lem:real-complex},
gives that
\begin{equation}
  \label{eq:S0y}
  S_0(y-y^3) \in L^p\big(\Omega; L^p(0,T;\dot H^{2,q})
  \cap C([0,T]; \dot H^{2-2/p-\epsilon,q}) \big),
  \quad \forall\epsilon > 0,
\end{equation}
while \cref{lem:S1-regu} implies
\(
  S_1F(y) \in L_\mathbb F^p(\Omega\times(0,T);\dot H^{1,q})
\).
Additionally, the initial condition $y_0 \in L^\infty $ ensures by Lemma \ref{lem:etDelta} that
\(
  G \in L^p(0,T;\dot H^{2/p-\epsilon,q})
\)
for all $ \epsilon > 0 $. Consequently, by the mild solution representation \cref{eq:y-mild-def},
we arrive at
\[
  y \in L_\mathbb F^p(\Omega\times(0,T); \dot H^{2/p-\epsilon,q}), 
  \quad \forall \epsilon > 0,
\]
which confirms the regularity assertion \cref{eq:y-lpHq}.
In conjunction with \cref{eq:F-interp}, we further infer that
\[
  F(y) \in L_\mathbb F^p\big(
  \Omega\times(0,T); \gamma(H,\dot H^{2/p-\epsilon,q})
  \big), \quad \forall \epsilon > 0,
\]
leading to the enhanced conclusion via Lemmas \ref{lem:real-complex} and \ref{lem:S1-regu} that
\begin{equation}
  \label{eq:S1Fy}
  S_1F(y) \in L^p\big(
  \Omega; L^p(0,T;\dot H^{1+2/p-\epsilon,q})
  \cap C([0,T]; \dot H^{1-\epsilon,q})
  \big), \quad \forall \epsilon > 0.
\end{equation}
Finally, combining \cref{eq:y-mild-def,eq:S0y,eq:S1Fy} establishes
the regularity assertions in \cref{eq:y-G-C,eq:y-G-lplq}.
Moreover, the regularity assertion in \cref{eq:y-C} follows directly from \cref{eq:y-G-C}
and the observation that $ G \in C([0,T];L^q) $,
which is justified by the initial condition $y_0 \in L^\infty $ and \cref{lem:etDelta}. 
This completes the proof of \cref{thm:y-regu}.

\hfill{$\blacksquare$}


\subsection{Proof of \texorpdfstring{\cref{thm:conv,thm:conv-smooth}}{}}
\label{ssec:conv}
We begin by summarizing several well-known results. It is known from \cite{Douglas1975} that
\begin{equation}
\label{eq:Ph-stab}
\|P_h\|_{\mathcal{L}(L^q)} \text{ is uniformly bounded in } h \text{ for all } q \in [1,\infty].
\end{equation}  
This uniform stability of $ P_h $ implies the following approximation property:
\begin{equation}
\label{eq:Ph-conv}
\|I - P_h\|_{\mathcal{L}(\dot{H}^{\alpha,q}, L^q)} \leqslant c h^\alpha
\quad\text{for all } \alpha \in [0,2] \text{ and } q \in (1,\infty).
\end{equation}  
Furthermore, it is a standard result (see, e.g., \cite[Equations (12) and (13)]{Geissert2007}) that
\begin{align}
  \norm{I - \Delta_h^{-1}P_h\Delta}_{\mathcal{L}(\dot H^{2,q}, L^q)}
  & \leqslant c h^2, \quad \forall\, q \in (1,\infty), \label{eq:Ritz-conv-H2} \\
  \norm{I - \Delta_h^{-1}P_h\Delta}_{\mathcal{L}(\dot H^{1,q}, L^q)}
  & \leqslant c h, \quad \forall\, q \in [2,\infty). \label{eq:Ritz-conv-H1}
\end{align}
By applying complex interpolation between \eqref{eq:Ritz-conv-H2} and \eqref{eq:Ritz-conv-H1}, one obtains
\begin{equation}
  \label{eq:Ritz}
  \norm{I - \Delta_h^{-1}P_h\Delta}_{\mathcal{L}(\dot H^{\alpha,q}, L^q)}
  \leqslant c h^\alpha, \quad \forall\, \alpha \in [1,2), \quad \forall\, q \in [2,\infty).
\end{equation}
Combining this with \cref{eq:Ph-conv} via the triangle inequality yields
\begin{equation}
  \label{eq:Ph-DeltahPhDelta}
  \norm{P_h - \Delta_h^{-1}P_h\Delta}_{\mathcal{L}(\dot H^{\alpha,q}, L^q)}
  \leqslant c h^\alpha, \quad \forall \alpha \in [1,2], \, \forall q \in [2,\infty),
\end{equation}
which in turn implies the estimate
\begin{equation}
  \label{eq:Ritz-conv-Halpha}
  \norm{P_h\Delta^{-1} - \Delta_h^{-1}P_h}_{\mathcal{L}(\dot H^{\alpha-2,q}, L^q)}
  \leqslant c h^\alpha, \quad \forall \alpha \in [1,2], \, \forall q \in [2,\infty).
\end{equation}

\begin{remark}
  The projection operator $ P_h $ can be extended to act as a linear operator
  from $ \dot{H}^{-1,q} $ to $ X_h $ for any $ q \in (1, \infty) $. This extension is defined via the identity  
  $$
  \int_{\mathcal{O}} (P_h v) v_h \, \mathrm{d}x = \dual{v, v_h},
  $$  
which holds for all $ v \in \dot{H}^{-1,q} $ and $ v_h \in X_h $. Here, $ \dual{\cdot, \cdot} $ denotes the duality pairing between $ \dot{H}^{-1,q} $ and $ \dot{H}^{1,q'} $.
\end{remark}

Next, we introduce the discrete Sobolev space and recall some standard results.
Let $ q \in (1, \infty) $. For any $ \alpha \in \mathbb{R} $, define $ \dot{H}_h^{\alpha, q} $ as the space $ X_h $ equipped with the norm
$$
  \|v_h\|_{\dot{H}_h^{\alpha, q}} := \|(-\Delta_h)^{\alpha/2} v_h\|_{L^q}, \quad v_h \in X_h.
$$
According to Theorem 3.1 (ii) in \cite{LiLpSpatail2023}, the purely imaginary powers of $ -\Delta_h $ are bounded in $ \mathcal{L}(\dot{H}_h^{0, q}) $ uniformly in $ h $. As a consequence, an application of Theorem 4.17 from \cite{Lunardi2018} yields, for any $ \alpha \in (0, 2) $, the equivalence
$$
  c_0 \|v_h\|_{\dot{H}_h^{\alpha, q}} \leqslant \|v_h\|_{[\dot{H}_h^{0, q}, \dot{H}_h^{2, q}]_{\alpha/2}} \leqslant c_1 \|v_h\|_{\dot{H}_h^{\alpha, q}}, \quad v_h \in X_h,
$$
with constants $ c_0, c_1 > 0 $ independent of $ h $. Here, $ [\dot{H}_h^{0, q}, \dot{H}_h^{2, q}]_{\alpha/2} $ denotes the complex interpolation space between $ \dot{H}_h^{0, q} $ and $ \dot{H}_h^{2, q} $; see \cite[Chapter 2]{Lunardi2018} for details.
Given the standard inverse estimate
$$
  \|v_h\|_{\dot{H}_h^{2, q}} \leqslant c h^{-2} \|v_h\|_{L^q}, \quad v_h \in X_h,
$$
complex interpolation yields the following general inverse estimate:
\begin{equation}
  \label{eq:inverse}
  \|v_h\|_{\dot{H}_h^{\beta, q}} \leqslant c h^{\alpha - \beta} \|v_h\|_{\dot{H}_h^{\alpha, q}},
  \quad v_h \in X_h, \, \forall 0 \leqslant \alpha < \beta < \infty.
\end{equation}
Applying this inverse estimate to \cref{eq:Ritz-conv-Halpha}, we obtain the estimate
\begin{equation}
\label{eq:foo-conv2}
\|P_h\Delta^{-1} - \Delta_h^{-1}P_h\|_{\mathcal{L}(\dot{H}^{\alpha-2,q}, \dot{H}_h^{\beta,q})} \leqslant c h^{\alpha - \beta}
\quad \text{for all } \alpha \in [1,2],\, \beta \in [0,1],\, q \in [2,\infty).
\end{equation}
In analogy with \cref{lem:real-complex}, we also have the following $ h $-uniform embedding result.
\begin{lemma}
  \label{lem:real-complex-discrete}
  Let $ 0 \leqslant \beta < \gamma \leqslant 1 $ and $ p,q \in (1,\infty) $. Then
  $$
    \|v_h\|_{\dot H_h^{\beta,q}} \leqslant c\|v_h\|_{(\dot H_h^{0,q}, \dot H_h^{2,q})_{\gamma/2,p}}, \quad v_h \in X_h.
  $$
\end{lemma}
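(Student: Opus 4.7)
The plan is to adapt the proof of \cref{lem:real-complex}(ii), specialized to the case $(\alpha,\beta)=(0,2)$, to the discrete couple $(X_0,X_1):=(\dot H_h^{0,q},\dot H_h^{2,q})$, taking care that every interpolation constant is independent of $h$. Two ingredients are already at hand: the $h$-uniform equivalence $\|v_h\|_{\dot H_h^{s,q}}\sim\|v_h\|_{[X_0,X_1]_{s/2}}$ for $s\in(0,2)$ recorded in the paragraph preceding the lemma, and the $h$-uniform boundedness of the imaginary powers of $-\Delta_h$ on $\dot H_h^{0,q}$ cited there from \cite{LiLpSpatail2023}.

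As a preliminary step, I would verify the $h$-uniform inclusion $X_1\hookrightarrow X_0$, that is, $\|(-\Delta_h)^{-1}\|_{\mathcal L(L^q)}\leqslant c$ uniformly in $h$. This follows from \cref{eq:Ritz-conv-Halpha} with $\alpha=2$, which yields $\|\Delta_h^{-1}P_h-P_h\Delta^{-1}\|_{\mathcal L(L^q)}\leqslant ch^2$, combined with the $L^q$-boundedness of $\Delta^{-1}$ and the identity $\Delta_h^{-1}=(\Delta_h^{-1}P_h)|_{X_h}$; the range $q\in(1,2)$ is then covered by a standard duality argument.

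Next, fixing any $\beta'\in(\beta,\gamma)$, I would use the chain of embeddings
\[
  (X_0,X_1)_{\gamma/2,p}\hookrightarrow(X_0,X_1)_{\beta'/2,1}\hookrightarrow[X_0,X_1]_{\beta'/2}\hookrightarrow[X_0,X_1]_{\beta/2}.
\]
The first is the monotonicity of real interpolation spaces when $X_1\hookrightarrow X_0$, with constant depending only on $\beta',\gamma,p$ and the $h$-uniform inclusion norm. The second is the universal inclusion $(X_0,X_1)_{\theta,1}\hookrightarrow[X_0,X_1]_\theta$. The third is the monotonicity of complex interpolation when $X_1\hookrightarrow X_0$; via the pre-lemma equivalence this reduces to the bound $\|(-\Delta_h)^{(\beta-\beta')/2}\|_{\mathcal L(L^q)}\leqslant c$, which I would derive from the $h$-uniform BIP bounds on $-\Delta_h$ together with the $h$-uniform spectral gap $\lambda_{\min}(-\Delta_h)\geqslant\lambda_{\min}(-\Delta)>0$ (itself a consequence of $X_h\subset\dot H^{1,2}$ and the min--max principle). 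Identifying $[X_0,X_1]_{\beta/2}$ with $\dot H_h^{\beta,q}$ then delivers the desired inequality.

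The main obstacle is simply to ensure that every embedding constant in the chain above is $h$-independent; the combination of \cref{eq:Ritz-conv-Halpha} (furnishing the $h$-uniform stability of $\Delta_h^{-1}$) and the BIP bound from \cite{LiLpSpatail2023} is precisely what is needed so that the otherwise standard real- and complex-interpolation machinery carries through verbatim at the discrete level.
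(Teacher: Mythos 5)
Your proposal is correct and follows exactly the route the paper intends: the paper gives no explicit proof of this lemma, asserting it ``in analogy with'' \cref{lem:real-complex} on the basis of the $h$-uniform boundedness of the imaginary powers of $-\Delta_h$ and the resulting $h$-uniform identification of $\dot H_h^{\alpha,q}$ with the complex interpolation space $[\dot H_h^{0,q},\dot H_h^{2,q}]_{\alpha/2}$, which is precisely the machinery you deploy. The only remark worth adding is that the $h$-uniform bound $\|(-\Delta_h)^{-1}\|_{\mathcal L(\dot H_h^{0,q})}\leqslant c$ (and hence the bound on the negative fractional powers, for every $q\in(1,\infty)$) is available directly from \cref{lem:Deltah-resolvent} with $r=0$, so your detour through \cref{eq:Ritz-conv-Halpha}, duality for $q\in(1,2)$, and the $L^2$ spectral gap can be shortened.
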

Additionally, we establish the following $ h $-uniform Sobolev-type embeddings into $ L^\infty $, whose proofs are given in \cref{sec:appendix}.

\begin{lemma}
  \label{lem:Hh-Linfty}
  The following continuous embeddings hold:
  \begin{enumerate}
    \item[\rm{(i)}] If $ q \in (3/2, \infty) $, then
      $ \|v_h\|_{L^\infty} \leqslant c \|v_h\|_{\dot H_h^{2,q}} $ for all $ v_h \in X_h $.
    \item[\rm{(ii)}]
      If $ \alpha \in (0,2) $ and $ q \in (3/\alpha,\infty) \cap [2,\infty) $, then
      $ \|v_h\|_{L^\infty} \leqslant c\|v_h\|_{\dot H_h^{\alpha,q}} $ for all $ v_h \in X_h $.
  \end{enumerate}
\end{lemma}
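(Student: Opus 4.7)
The plan is to prove part (i) by a Ritz-projection reduction to the continuous Sobolev embedding, and to derive part (ii) from the continuous scale when $\alpha\leqslant 1$ and by $h$-uniform complex interpolation when $\alpha\in(1,2)$.

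For part (i), given $v_h\in X_h$ with $q>3/2$, set $g_h:=-\Delta_h v_h\in X_h$ and let $w\in\dot H^{2,q}$ solve $-\Delta w=g_h$ with homogeneous Dirichlet boundary conditions. Elliptic regularity gives $\|w\|_{\dot H^{2,q}}\leqslant c\|v_h\|_{\dot H_h^{2,q}}$, and in three dimensions $\dot H^{2,q}\hookrightarrow L^\infty$ whenever $q>3/2$, hence $\|w\|_{L^\infty}\leqslant c\|w\|_{\dot H^{2,q}}$. Because $g_h\in X_h$ forces $P_h g_h=g_h$, one obtains $v_h=\Delta_h^{-1}P_h\Delta w$, so $v_h$ is the Ritz projection of $w$. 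Splitting
\begin{equation*}
v_h-w=(\Delta_h^{-1}P_h\Delta-P_h)w-(I-P_h)w,
\end{equation*}
the second summand is bounded in $L^\infty$ by $c\|w\|_{L^\infty}$ via the $L^\infty$-stability of $P_h$ in \cref{eq:Ph-stab}, while the first summand lies in $X_h$, so the standard Lebesgue inverse inequality $\|\cdot\|_{L^\infty}\leqslant c h^{-3/q}\|\cdot\|_{L^q}$ combined with \cref{eq:Ph-DeltahPhDelta} yields a factor $h^{2-3/q}$, which is bounded because $q>3/2$. Summing the two estimates with $\|w\|_{L^\infty}$ completes part (i).

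For part (ii) in the range $\alpha\in(0,1]$ and $q\in(3/\alpha,\infty)\cap[2,\infty)$, the key observation is that the discrete and continuous fractional Sobolev norms are $h$-uniformly equivalent on $X_h$: at $\alpha=0$ both equal $\|\cdot\|_{L^q}$, and at $\alpha=1$ the equivalence $\|\nabla v_h\|_{L^q}\sim\|(-\Delta_h)^{1/2}v_h\|_{L^q}$ is standard for quasi-uniform meshes; complex interpolation between these two endpoints transfers the equivalence to $\alpha\in(0,1)$. Composing this with the continuous Sobolev embedding $\dot H^{\alpha,q}\hookrightarrow L^\infty$ (valid since $\alpha q>3$) delivers the claim. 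For $\alpha\in[1,2)$, one chooses $q_1>3$ and $q_2>3/2$ with $1/q=(2-\alpha)/q_1+(\alpha-1)/q_2$, admissible since $1/q<\alpha/3$, and complex interpolation of the identity $I:X_h\to X_h$ between the two endpoint embeddings $\dot H_h^{1,q_1}\hookrightarrow L^\infty$ (from the $\alpha=1$ case just established) and $\dot H_h^{2,q_2}\hookrightarrow L^\infty$ (from part (i)), using the $h$-uniform interpolation identity $[\dot H_h^{1,q_1},\dot H_h^{2,q_2}]_{\alpha-1}=\dot H_h^{\alpha,q}$, yields $\dot H_h^{\alpha,q}\hookrightarrow L^\infty$ with $h$-independent constant.

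The main obstacle is the $h$-uniform complex-interpolation identity with mixed regularity and integrability indices required for $\alpha\in(1,2)$, which extends the paper's fixed-$q$ interpolation result. An alternative route that sidesteps this identification is to use the subordination formula $(-\Delta_h)^{-\alpha/2}=\Gamma(\alpha/2)^{-1}\int_0^\infty t^{\alpha/2-1}e^{t\Delta_h}\,\mathrm{d}t$ together with an $h$-uniform ultracontractivity estimate on the discrete heat semigroup, which in turn follows from part (i) and the $h$-uniform $L^q$-analyticity of $(e^{t\Delta_h})_{t\geqslant 0}$ provided by the cited discrete maximal regularity results.
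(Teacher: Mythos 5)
Your part (i) is correct and is essentially the paper's own argument: you pass to $w=\Delta^{-1}\Delta_h v_h$, use the Ritz/projection error bounds \cref{eq:Ph-DeltahPhDelta} together with the inverse inequality $\|\cdot\|_{L^\infty}\leqslant ch^{-3/q}\|\cdot\|_{L^q}$, and absorb the factor $h^{2-3/q}$ using $q>3/2$. The paper does the same thing (for general $\alpha$, with \cref{eq:Ritz} in place of \cref{eq:Ritz-conv-H2}), the only extra ingredient being a duality step to convert $\|\Delta_h v_h\|_{\dot H^{\alpha-2,q}}$ into $\|v_h\|_{\dot H_h^{\alpha,q}}$ via \cref{lem:Ph-stab-extend}; for $\alpha=2$ that step is trivial, which is why your version of (i) closes.

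Part (ii) has genuine gaps. First, for $\alpha\in(0,1]$ you rest the whole argument on the $h$-uniform equivalence $\|\nabla v_h\|_{L^q}\sim\|(-\Delta_h)^{1/2}v_h\|_{L^q}$ for $q\neq 2$. This is a discrete Riesz-transform bound; it is true but it is not "standard" in the sense of following from anything in this paper, and calling it standard does not discharge it. Second, for $\alpha\in(1,2)$ you yourself identify the mixed-index identity $[\dot H_h^{1,q_1},\dot H_h^{2,q_2}]_{\alpha-1}=\dot H_h^{\alpha,q}$ with $h$-independent constants as the main obstacle and do not prove it; the paper only establishes fixed-$q$ interpolation $[\dot H_h^{0,q},\dot H_h^{2,q}]_{\alpha/2}=\dot H_h^{\alpha,q}$ from uniform boundedness of imaginary powers, and the mixed-integrability version is a strictly stronger statement. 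Third, your fallback via subordination does not close as described: from part (i) and analyticity one only gets $\|e^{t\Delta_h}\|_{\mathcal L(L^q,L^\infty)}\leqslant c\,t^{-1}$ (since $\|e^{t\Delta_h}g_h\|_{L^\infty}\leqslant c\|\Delta_h e^{t\Delta_h}g_h\|_{L^q}\leqslant ct^{-1}\|g_h\|_{L^q}$), and with that exponent the integral $\int_0^1 t^{\alpha/2-1}\,t^{-1}\,\mathrm{d}t$ diverges for every $\alpha<2$. What the subordination argument actually needs is the sharp ultracontractivity rate $t^{-3/(2q)}$, which is again a nontrivial $h$-uniform Gaussian-type estimate not available here. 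The paper's proof of (ii) avoids all three issues: it needs only the fractional Ritz estimate \cref{eq:Ritz} (complex interpolation of \cref{eq:Ritz-conv-H2} and \cref{eq:Ritz-conv-H1} at fixed $q$), the inverse inequality with $\alpha>3/q$, and the duality argument through \cref{lem:Ph-stab-extend}, all of which stay within the fixed-integrability interpolation framework the paper has set up. I would recommend replacing your treatment of (ii) by that route, or by running your part (i) argument verbatim at fractional order $\alpha$ and then supplying the duality step.
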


Now, we introduce the discrete semigroup $ e^{t\Delta_h} $ and the maximal $ L^p $-regularity estimates
associated with this semigroup.
Let \( e^{t\Delta_h} \), for \( t \in [0,\infty) \), denote the analytic semigroup generated by the discrete Laplace operator \( \Delta_h \).
The following fundamental properties hold, with proofs provided in \Cref{sec:appendix}.
\begin{lemma}
  \label{lem:etDeltah}
  The following estimates hold:
  \begin{itemize}
    \item[\rm{(i)}]
      For any \( \alpha \in [0,1] \), \( q \in (1,\infty) \), and \( t >0 \),
      \begin{equation}
        \norm{(-\Delta_h)^{\alpha}e^{t\Delta_h}}_{\mathcal{L}(L^q)}
        \leqslant ct^{-\alpha}.
        \label{eq:etDeltah}
      \end{equation}
    \item[\rm{(ii)}] For any $ \alpha \in [0,2] $, $ q \in (1,\infty) $, and $ t > 0 $,
      \begin{align}
        \norm{e^{t\Delta_h}P_h - P_he^{t\Delta}}_{
          \mathcal{L}(\dot{H}^{\alpha,q},L^q)
        } + \norm{(I-P_h)e^{t\Delta}}_{
          \mathcal{L}(\dot{H}^{\alpha,q}, L^q)
        } \leqslant c \min\Big\{ h^\alpha, \frac{h^2}{t^{1-\alpha/2}} \Big\}.
        \label{eq:foo-conv4}
      \end{align}
  \end{itemize}
\end{lemma}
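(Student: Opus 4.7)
The plan handles (i) via the $h$-uniform sectoriality of $-\Delta_h$ on $L^q$, and (ii) by splitting into the approximation term $(I-P_h)e^{t\Delta}$ and the commutator term $E_h(t):=e^{t\Delta_h}P_h - P_he^{t\Delta}$, the latter treated by a sectorial resolvent identity combined with complex interpolation. For part (i), I invoke Theorem~3.1(ii) of \cite{LiLpSpatail2023}, which guarantees that the purely imaginary powers of $-\Delta_h$ on $L^q$ are uniformly bounded in $h$; consequently $-\Delta_h$ is $h$-uniformly sectorial with the resolvent bound $\|(z-\Delta_h)^{-1}\|_{\mathcal{L}(L^q)} \leqslant c/|z|$ on a common sector avoiding the origin. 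The Dunford contour representation of $e^{t\Delta_h}$ together with the identity $\Delta_h(z-\Delta_h)^{-1} = -I + z(z-\Delta_h)^{-1}$ yields $\|e^{t\Delta_h}\|_{\mathcal{L}(L^q)} \leqslant c$ and $\|\Delta_h e^{t\Delta_h}\|_{\mathcal{L}(L^q)} \leqslant c/t$, with intermediate $\alpha \in (0,1)$ following from complex interpolation.

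For the $(I - P_h)e^{t\Delta}$ piece of part (ii), I apply $\|I - P_h\|_{\mathcal{L}(\dot H^{\beta,q}, L^q)} \leqslant c h^\beta$ from \cref{eq:Ph-conv} twice: with $\beta = \alpha$ and the contractivity of $e^{t\Delta}$ on $\dot H^{\alpha,q}$ I get $\|(I-P_h)e^{t\Delta}v\|_{L^q} \leqslant ch^\alpha \|v\|_{\dot H^{\alpha,q}}$, and with $\beta = 2$ combined with \cref{lem:etDelta} I get $\|(I-P_h)e^{t\Delta}v\|_{L^q} \leqslant ch^2\|e^{t\Delta}v\|_{\dot H^{2,q}} \leqslant ch^2 t^{-(1-\alpha/2)}\|v\|_{\dot H^{\alpha,q}}$. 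For $E_h(t)$, set $R_h := \Delta_h^{-1}P_h\Delta$. Using $\Delta_h P_h = P_h\Delta + \Delta_h(P_h - R_h)$ on $\dot H^{2,q}$ together with $(z-\Delta_h)^{-1}\Delta_h = -I + z(z-\Delta_h)^{-1}$, a direct calculation gives
\[
(z-\Delta_h)^{-1}P_h - P_h(z-\Delta)^{-1} = -(P_h - R_h)(z-\Delta)^{-1} + z(z-\Delta_h)^{-1}(P_h - R_h)(z-\Delta)^{-1}.
\]
Integrating this identity against $e^{zt}/(2\pi i)$ over the sectorial contour represents $E_h(t)$; bounding the integrand via $\|P_h - R_h\|_{\mathcal{L}(\dot H^{2,q}, L^q)} \leqslant ch^2$ from \cref{eq:Ph-DeltahPhDelta}, $\|z(z-\Delta_h)^{-1}\|_{\mathcal{L}(L^q)} \leqslant c$ from part (i), and $\|(z-\Delta)^{-1}\|_{\mathcal{L}(\dot H^{\alpha,q}, \dot H^{2,q})} \leqslant c|z|^{-\alpha/2}$ (obtained by interpolating the trivial uniform bounds into $\dot H^{\alpha+2,q}$ and $\dot H^{\alpha,q}$), together with the standard estimate $\int_\Gamma|e^{zt}||z|^{-\alpha/2}|dz| \leqslant ct^{-(1-\alpha/2)}$, yields $\|E_h(t)\|_{\mathcal{L}(\dot H^{\alpha,q}, L^q)} \leqslant ch^2 t^{-(1-\alpha/2)}$. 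The companion $h^\alpha$-bound then follows by complex interpolation of $E_h(t)$ between $L^q \to L^q$ (trivial bound $c$ from part (i) and the stability of $P_h$) and $\dot H^{2,q} \to L^q$ (bound $ch^2$ from the above), using $[L^q, \dot H^{2,q}]_{\alpha/2} = \dot H^{\alpha,q}$.

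The principal obstacle is establishing the $h$-uniform sectoriality of $-\Delta_h$ in $L^q$ for $q \neq 2$, since standard Galerkin energy arguments only deliver the case $q = 2$. This relies on the non-trivial $H^\infty$-calculus result of \cite{LiLpSpatail2023}, which in turn underpins the uniform resolvent bounds used throughout the argument. Once this ingredient is in place, every remaining step reduces to a routine contour estimate combined with the finite-element approximation bound \cref{eq:Ph-DeltahPhDelta} for $P_h - R_h$ and a standard interpolation argument.
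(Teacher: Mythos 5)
Your proposal is correct and follows essentially the same route as the paper: the term $(I-P_h)e^{t\Delta}$ is handled exactly as in \cref{lem:I-Ph-etDelta}, and your resolvent identity for the commutator is the same one the paper uses (your $P_h-R_h$ applied to $(z-\Delta)^{-1}$ is the paper's $(P_h\Delta^{-1}-\Delta_h^{-1}P_h)(z-\Delta)^{-1}\Delta\cdot\Delta^{-1}$), with the same $O(h^2)$ Ritz-type bound and the same final complex interpolation for the $h^\alpha$ estimate. The only cosmetic differences are that you obtain the $h^2t^{\alpha/2-1}$ bound directly for each $\alpha$ from the contour integral rather than interpolating between the $\alpha=0$ and $\alpha=2$ endpoints as the paper does, and that for part (i) you source the $h$-uniform sectoriality from the bounded imaginary powers of \cite{LiLpSpatail2023} rather than from the resolvent estimate of \cite{Bakaev2002} recorded in \cref{lem:Deltah-resolvent}.
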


For a function $ g_h \in L^1(0,T;\dot{H}_h^{0,2}) $, the discrete convolution operator $ S_{0,h} $ is defined by
\begin{equation}
  \label{eq:S0h-def}
  (S_{0,h}g_h)(t) := \int_0^t e^{(t-s)\Delta_h} g_h(s) \, \mathrm{d}s, \quad t \in [0,T].
\end{equation}
For a stochastic process $ g_h \in L_\mathbb{F}^2(\Omega \times (0,T); \gamma(H, \dot{H}_h^{0,2})) $,
the discrete stochastic convolution operator $ S_{1,h} $ is given by
\begin{equation}
  \label{eq:S1h-def}
  (S_{1,h}g_h)(t) := \int_0^t e^{(t-s)\Delta_h} g_h(s) \, \mathrm{d}W_H(s), \quad t \in [0,T].
\end{equation}
These operators satisfy the following fundamental discrete maximal $ L^p $-regularity estimates.
\begin{lemma}
  \label{lem:S0h}
  Suppose $ p,q \in (1,\infty) $. For any $ g_h \in L^p(0,T;\dot H_h^{0,q}) $, the following
  discrete maximal $ L^p $-regularity estimate is valid:
  \[
    \norm{S_{0,h}g_h}_{C([0,T];(\dot{H}_h^{0,q},\dot H_h^{2,q})_{1-1/p,p}} +
    \norm{S_{0,h}g_h}_{L^p(0,T;\dot{H}_h^{2,q})} \leqslant
    c\norm{g_h}_{L^p(0,T;\dot{H}_h^{0,q})}.
  \]
\end{lemma}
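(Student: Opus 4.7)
The plan is to mirror the proof of the continuous maximal $L^p$-regularity estimate in \cref{lem:S0-regu}, replacing the Laplace operator $\Delta$ with the discrete Laplace operator $\Delta_h$ and exploiting a fact already recalled in the excerpt (Theorem~3.1(ii) in \cite{LiLpSpatail2023}): the purely imaginary powers $(-\Delta_h)^{is}$ are bounded on $X_h$ equipped with the $L^q$-norm, uniformly in $h$. This uniform boundedness of imaginary powers is precisely the hypothesis needed to transfer Weis' theory \cite{Weis2001} to the semidiscrete setting with $h$-independent constants.

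First, I would invoke the Kalton--Weis (equivalently, Dore--Venni) machinery: on the UMD space $L^q$ with $q \in (1,\infty)$, the $h$-uniform bound on $(-\Delta_h)^{is}$ upgrades to an $h$-uniform bounded $H^\infty$-calculus for $-\Delta_h$ on some sector $\Sigma_\sigma$ with $\sigma < \pi/2$, and hence to $R$-boundedness of the resolvent family $\{\lambda(\lambda + \Delta_h)^{-1} : \lambda \in \Sigma_\sigma\}$, again uniformly in $h$. Applying Weis' characterization of maximal $L^p$-regularity then yields
\[
  \norm{\Delta_h S_{0,h} g_h}_{L^p(0,T;L^q)} + \Big\lVert \tfrac{\mathrm{d}}{\mathrm{d}t} S_{0,h} g_h \Big\rVert_{L^p(0,T;L^q)} \leqslant c \norm{g_h}_{L^p(0,T;L^q)},
\]
with $c$ independent of $h$. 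Since the $\dot{H}_h^{2,q}$-norm is by definition $\|(-\Delta_h)\cdot\|_{L^q}$, this already delivers the second term in the asserted inequality.

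Next, for the uniform-in-time estimate, I would apply the standard trace characterization of real interpolation spaces (Corollary~1.14 in \cite{Lunardi2018}, already cited): any $u$ with $u \in L^p(0,T;\dot{H}_h^{2,q})$ and $u' \in L^p(0,T;\dot{H}_h^{0,q})$ lies in $C([0,T]; (\dot{H}_h^{0,q}, \dot{H}_h^{2,q})_{1-1/p,p})$, with an estimate of the corresponding norm by the two $L^p$ norms above. Applying this to $u = S_{0,h} g_h$ and chaining with the maximal regularity bound from the previous step yields the $C([0,T];\cdot)$ part of the lemma. A small but important observation is that the abstract trace constant depends only on $p$, so it is trivially $h$-independent once the discrete maximal regularity estimate has $h$-independent constants.

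The main obstacle is ensuring $h$-uniformity of every constant appearing in the chain: Weis' theorem is an \emph{equivalence} between maximal regularity and $R$-boundedness of the resolvent, both of which carry quantitative constants, and the Kalton--Weis step produces an $H^\infty$-bound from the imaginary-power bound in a quantitative way. Fortunately, all these constants depend only on the $H^\infty$-bound of $-\Delta_h$ on $L^q$ and on the UMD constant of $L^q$, both of which are $h$-independent by \cite{LiLpSpatail2023}. Apart from this bookkeeping, the argument is a verbatim discrete analog of the proof of \cref{lem:S0-regu}, and no new ideas are needed beyond the $h$-uniform functional calculus.
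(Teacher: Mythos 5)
The paper does not actually prove this lemma; it simply cites \cite[Theorem~3.2]{Geissert2006} (see the remark following \cref{lem:S1h}), so your self-contained sketch is necessarily a different route. Its overall architecture --- an $h$-uniform functional calculus for $-\Delta_h$, hence $h$-uniform $R$-sectoriality of angle $<\pi/2$, hence maximal $L^p$-regularity by Weis' characterization, and finally the trace characterization of $(\dot H_h^{0,q},\dot H_h^{2,q})_{1-1/p,p}$ for the $C([0,T];\cdot)$ bound --- is the standard modern argument and does yield $h$-independent constants for the reasons you give; the trace step in particular is unobjectionable.

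One link in your chain is false as written: uniformly bounded purely imaginary powers do \emph{not} upgrade to a bounded $H^\infty$-calculus. The implication goes only in the other direction (BIP is strictly weaker than having a bounded $H^\infty$-calculus, and Kalton--Weis is not ``equivalently Dore--Venni''). There are two clean repairs. Either start from the $h$-uniform bounded $H^\infty$-calculus of $-\Delta_h$, which is what \cite[Theorem~3.1]{LiLpSpatail2023} actually supplies and which the paper's remark invokes for \cref{lem:S1h}; the Kalton--Weis theorem (using that $L^q$ has property $(\alpha)$) then gives $h$-uniform $R$-sectoriality on a sector of angle $<\pi/2$, and Weis' theorem applies. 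Or stay with BIP and invoke the Dore--Venni theorem, which derives maximal $L^p$-regularity on a UMD space directly from bounded imaginary powers with power angle $<\pi/2$ --- but then you must verify that the exponential growth rate of $\norm{(-\Delta_h)^{is}}$ in $s$ is below $\pi/2$ uniformly in $h$, which the bare statement ``uniformly bounded'' recalled in the paper does not by itself record. With either repair your proposal is a valid proof; as it stands, the BIP-to-$H^\infty$ step is a genuine gap.
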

\begin{lemma}
  \label{lem:S1h}
  Let $ p \in (2,\infty) $ and $ q \in [2,\infty) $.
  For any $ g_h \in L_\mathbb{F}^p(\Omega\times(0,T);\gamma(H,\dot H_h^{0,q})) $,
  the following discrete stochastic maximal $ L^p $-regularity estimate holds:
  \begin{equation}
    \begin{aligned}
      & \norm{S_{1,h}g_h}_{L^p(\Omega;C([0,T];(\dot H_h^{0,q}, \dot H_h^{2,q})_{1/2-1/p,p}))} +
      \norm{S_{1,h}g_h}_{L^p(\Omega\times(0,T);\dot H_h^{1,q})}                                    \\
      \leqslant{} &
      c \norm{g_h}_{L^p(\Omega\times(0,T);\gamma(H,\dot H_h^{0,q}))}.
    \end{aligned}
  \end{equation}
\end{lemma}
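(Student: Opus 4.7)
The plan is to reduce \cref{lem:S1h} to the abstract stochastic maximal $L^p$-regularity theorem of van~Neerven--Veraar--Weis \cite{Neerven2012}, applied to the discrete Laplacian $-\Delta_h$ viewed as the negative generator of an analytic semigroup on $\dot H_h^{0,q}$, with every relevant constant controlled independently of the mesh parameter $h$.

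First I would verify that the hypotheses of the abstract theorem are satisfied uniformly in $h$. Since $X_h$ is a closed linear subspace of $L^q$ endowed with the $L^q$-norm, $\dot H_h^{0,q}$ inherits both the UMD property and the type-$2$ constant of $L^q$ (for $q \in [2,\infty)$), independently of $h$. The operator $-\Delta_h$ is positive and self-adjoint on $\dot H_h^{0,2}$, generates an analytic semigroup on $\dot H_h^{0,q}$, and its domain endowed with the graph norm coincides with $\dot H_h^{2,q}$.

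The crux of the argument is that $-\Delta_h$ admits a bounded $H^\infty$-calculus on $\dot H_h^{0,q}$ of angle strictly less than $\pi/2$, uniformly in $h$. On UMD spaces this is equivalent to the uniform-in-$h$ boundedness of the purely imaginary powers of $-\Delta_h$, which is precisely the content of Theorem~3.1(ii) of \cite{LiLpSpatail2023} recalled earlier in the paper. Equipped with this bounded $H^\infty$-calculus I would then invoke Theorems~1.1 and 1.2 of \cite{Neerven2012}, which together deliver
\begin{align*}
  & \norm{S_{1,h}g_h}_{L^p(\Omega;C([0,T];(\dot H_h^{0,q}, \dot H_h^{2,q})_{1/2-1/p,p}))}
  + \norm{S_{1,h}g_h}_{L^p(\Omega\times(0,T);\dot H_h^{1,q})} \\
  & \qquad \leqslant c \norm{g_h}_{L^p(\Omega\times(0,T);\gamma(H,\dot H_h^{0,q}))},
\end{align*}
where the trace space appears as the standard real-interpolation trace space for the stochastic maximal $L^p$-regularity problem, and the interior space $\dot H_h^{1,q}$ comes from the complex-interpolation identification $\dot H_h^{1,q}=[\dot H_h^{0,q},\dot H_h^{2,q}]_{1/2}$, itself justified by the bounded imaginary powers.

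The main obstacle is guaranteeing that every constant appearing in this chain of abstract implications is genuinely independent of $h$. This reduces entirely to the uniform $H^\infty$-bound for $-\Delta_h$, which is supplied by \cite{LiLpSpatail2023}. Once this input is accepted, the remainder of the argument is a direct transcription of the abstract theorem into the present discrete setting, so that \cref{lem:S1h} is essentially a restatement of the discrete stochastic maximal $L^p$-regularity result of \cite{LiLpSpatail2023} in the notation used here.
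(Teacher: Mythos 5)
Your proposal follows essentially the same route as the paper: the lemma is obtained by feeding the $h$-uniform boundedness of the $H^\infty$-calculus of $-\Delta_h$ from \cite[Theorem~3.1]{LiLpSpatail2023} into the abstract stochastic maximal $L^p$-regularity theorem of van Neerven--Veraar--Weis (the paper cites \cite[Theorem~3.5]{Neerven2012b} and points to \cite[Theorem~3.2]{LiLpSpatail2023} for details, while you cite the companion results in \cite{Neerven2012}; the substance is identical). One small inaccuracy: uniform boundedness of the purely imaginary powers is \emph{not} equivalent to a bounded $H^\infty$-calculus on UMD spaces (only the implication from $H^\infty$-calculus to BIP holds in general), but this does not damage the argument, since \cite[Theorem~3.1]{LiLpSpatail2023} supplies the uniform $H^\infty$-bound directly, which is exactly the input the abstract theorem requires.
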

\begin{remark}
  A proof of Lemma~\ref{lem:S0h} can be found in \cite[Theorem~3.2]{Geissert2006}.
  Since the boundedness constant of the $ H^\infty $-calculus for $ -\Delta_h $ is 
  independent of the spatial mesh size $ h $, as shown in \cite[Theorem~3.1]{LiLpSpatail2023},
  Lemma~\ref{lem:S1h} follows directly from \cite[Theorem~3.5]{Neerven2012b};
  see \cite[Theorem~3.2]{LiLpSpatail2023} for further details.
\end{remark}


We proceed to present some auxiliary estimates.

\begin{lemma}
  Suppose that $ y_0 \in L^\infty $ is deterministic. Consider the mild solution $ y $ to equation (\ref{eq:model}),
  the strong solution $ y_h $ to equation (\ref{eq:yh}), and the function $ G $ defined by equation
  (\ref{eq:G-def}). Additionally, let $ G_h(t) := e^{t\Delta_h} P_h y_0 $ for $ t \in [0,T] $.
  For notational simplicity, let $ z := y-G $ and $ \xi_h := S_{0,h}(\Delta_hP_h-P_h\Delta)z $.
  For any $ p,q \in (2,\infty) $ and $ \epsilon > 0 $,
  the following inequalities hold:
  \begin{align}
     & \norm{G-G_h}_{L^p(0,T;L^q)} + \norm{P_hG-G_h}_{L^p(0,T;L^q)}
     \leqslant ch^{\frac2p}, \label{eq:G-Gh-PhG} \\
     & \norm{z-P_hz}_{L^p(\Omega\times(0,T);L^q)}
     \leqslant ch^{1+\frac2p-\epsilon},  \label{eq:z-Phz} \\
     & \norm{\xi_h}_{L^p(\Omega\times(0,T);L^q)}
     \leqslant ch^{1+\frac2p-\epsilon}, \label{eq:Deltah-Delta} \\
     & \norm{\xi_h}_{L^p(\Omega;C([0,T];L^q))}
     \leqslant ch^{1-\epsilon}, \label{eq:Deltah-Delta-2}  \\
     & \mathbb E\bigg[
       \int_0^T \norm{(y-P_hy)(t)}_{L^4}^p \norm{y(t)}_{L^8}^{2p}
       \, \mathrm{d}t
     \bigg] \leqslant ch^{2-\epsilon}. \label{eq:y-Phy-y} 
  \end{align}
  Furthermore, for any $ \beta \in \Big(1, \frac{3p}{3p-4}\Big) $,
  the following inequalities hold:
  \begin{align}
     & \mathbb E\bigg[
       \int_0^T \norm{(P_hG-G_h)(t)}_{L^{2\beta'}}^p
       \big(
         \norm{y(t)}_{L^{2\beta}}^p +
         \norm{y_h(t)}_{L^{2\beta}}^p
       \big) \, \mathrm{d}t
     \bigg]
     \leqslant ch^{2 - \frac32\big(1-\frac1\beta\big)p}, 
     \label{eq:PhG-Gh-y}     \\
     & \mathbb E\bigg[
       \int_0^T \norm{\xi_h(t)}_{L^{2\beta'}}^p
       \big(
         \norm{y(t)}_{L^{2\beta}}^{p} +
         \norm{y_h(t)}_{L^{2\beta}}^p
       \big) \, \mathrm{d}t
     \bigg] \leqslant ch^{p + 2 -\frac32\big(1-\frac1\beta\big)p - \epsilon}.
     \label{eq:love}
  \end{align}
  Here, $ \beta' $ denotes the H\"older conjugate of $ \beta $.
\end{lemma}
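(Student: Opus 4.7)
\medskip\noindent\textbf{Plan of proof.}
The plan is to establish each of the seven estimates separately, leveraging (a) the regularity of the mild solution from \cref{thm:y-regu}, (b) the semigroup approximation bound \cref{eq:foo-conv4}, (c) the projection approximation estimates \cref{eq:Ph-conv,eq:Ph-DeltahPhDelta,eq:foo-conv2}, (d) the discrete maximal $L^p$-regularity of \cref{lem:S0h}, and (e) the discrete inverse inequality \cref{eq:inverse}.

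For \cref{eq:G-Gh-PhG}, applying \cref{eq:foo-conv4} at $\alpha = 0$ (valid since $y_0 \in L^\infty \subset L^q$) produces the pointwise bound $c\min\{1, h^2/t\}$ for both $(I - P_h)e^{t\Delta}y_0$ and $(P_h e^{t\Delta} - e^{t\Delta_h}P_h)y_0$; the elementary computation $\int_0^T \min\{1, h^2/t\}^p\,\mathrm{d}t \leqslant ch^2$ then delivers the exponent $2/p$ after taking the $p$-th root. Estimate \cref{eq:z-Phz} is immediate from \cref{eq:Ph-conv} at smoothness $\alpha = 1 + 2/p - \epsilon$ combined with \cref{eq:y-G-lplq}. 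For \cref{eq:Deltah-Delta}, set $\psi_h := (P_h - \Delta_h^{-1}P_h\Delta)z$, so that $\xi_h = \Delta_h S_{0,h}\psi_h$ (since $S_{0,h}$ commutes with $\Delta_h$); discrete maximal $L^p$-regularity gives $\norm{\xi_h}_{L^p(\Omega\times(0,T);L^q)} \leqslant c\norm{\psi_h}_{L^p(\Omega\times(0,T);L^q)}$, which is controlled by $ch^{1+2/p-\epsilon}$ via \cref{eq:Ph-DeltahPhDelta} (at $\alpha = 1 + 2/p - \epsilon$) and \cref{eq:y-G-lplq}.

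The principal obstacle is \cref{eq:Deltah-Delta-2}, since \cref{lem:S0h} does not directly control the $C([0,T];L^q)$-norm of $\xi_h = \Delta_h S_{0,h}\psi_h$. My plan is to apply \cref{lem:S0h} in its trace-space form to obtain $S_{0,h}\psi_h \in L^p(\Omega; C([0,T];(\dot H_h^{0,q},\dot H_h^{2,q})_{1-1/p,p}))$, then embed the interpolation space into $\dot H_h^{2-2/p-\delta,q}$ uniformly in $h$ by chaining $(\dot H_h^{0,q},\dot H_h^{2,q})_{1-1/p,p} \hookrightarrow [\dot H_h^{0,q},\dot H_h^{2,q}]_{\theta'}$ for $\theta' < 1-1/p$ with the discrete identification $[\dot H_h^{0,q},\dot H_h^{2,q}]_{\theta'} \cong \dot H_h^{2\theta',q}$ recorded before \cref{lem:real-complex-discrete}. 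The inverse inequality \cref{eq:inverse} then lifts from $\dot H_h^{2-2/p-\delta,q}$ to $\dot H_h^{2,q}$ at the cost of a factor $h^{-(2/p+\delta)}$, and combining with \cref{eq:Ph-DeltahPhDelta} together with \cref{eq:y-G-lplq} yields $\norm{\xi_h}_{L^p(\Omega;C([0,T];L^q))} \leqslant ch^{1-\delta-\epsilon'}$; arbitrariness of $\delta$ and $\epsilon'$ then gives \cref{eq:Deltah-Delta-2}.

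For \cref{eq:y-Phy-y,eq:PhG-Gh-y,eq:love}, the common strategy is Hölder's inequality in $\Omega\times(0,T)$ with conjugate exponents $(U,U')$ sent to $(1^+,\infty)$. In \cref{eq:y-Phy-y}, the $L^{pU}$-bound on $y-P_hy$ in $L^4$ comes from \cref{eq:Ph-conv} together with \cref{eq:y-lpHq}, and the $L^{2pU'}$-bound on $y$ in $L^8$ from \cref{eq:y-lplq}; letting $U \to 1^+$ yields the exponent $2-\epsilon$. For \cref{eq:PhG-Gh-y,eq:love}, the factor involving $P_hG-G_h$ (resp.\ $\xi_h$) is bounded via \cref{eq:G-Gh-PhG} (resp.\ \cref{eq:Deltah-Delta}), while the factor containing $y_h$ requires the inverse inequality $\norm{y_h(t)}_{L^{2\beta}} \leqslant ch^{-3/(2\beta')}\norm{y_h(t)}_{L^2}$ (valid since $y_h(t) \in X_h$) combined with a uniform-in-$h$ $L^{pU'}(\Omega\times(0,T);L^2)$-bound on $y_h$, which is obtained by applying It\^o's formula to $\norm{y_h}_{L^2}^{pU'}$ in the spirit of \cref{eq:yn-esti1}. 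The factor $h^{-3p/(2\beta')}$ produced by this inverse inequality is exactly the price that appears in the stated exponents, and the hypothesis $\beta \in (1, 3p/(3p-4))$ (equivalently $\beta' > 3p/4$) guarantees that the resulting exponent on $h$ remains positive.
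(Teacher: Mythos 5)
Your treatment of \cref{eq:G-Gh-PhG,eq:z-Phz,eq:Deltah-Delta,eq:y-Phy-y} coincides with the paper's, and your route to \cref{eq:Deltah-Delta-2} is a correct, essentially equivalent reorganization: you apply the discrete inverse inequality \cref{eq:inverse} to $S_{0,h}\psi_h$ \emph{after} the trace estimate of \cref{lem:S0h}, whereas the paper shifts the smoothness index \emph{before} the convolution by measuring $(\Delta_hP_h-P_h\Delta)z$ in $\dot H_h^{-2+2/p+\epsilon/2,q}$ and invoking \cref{eq:foo-conv2} with a nonzero target index $\beta$; both hinge on the same ingredients (\cref{lem:real-complex-discrete}, the inverse estimate, \cref{eq:y-G-lplq}) and yield $h^{1-\epsilon}$. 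For \cref{eq:PhG-Gh-y,eq:love} your route genuinely differs: you pay the factor $h^{-\frac{3p}{2}(1-1/\beta)}$ through a spatial inverse inequality on $y_h$ combined with a uniform $L^2$-moment bound from It\^o's formula, whereas the paper never applies an inverse estimate to $y_h$; it instead derives the $h$-uniform bound $y_h\in L^{\frac{4\beta}{3\beta-3}}(\Omega\times(0,T);L^{2\beta})$ from a Lyapunov-type energy estimate (following \cite{LiuWei2010}) and the interpolation $\norm{y_h}_{L^{2\beta}}^{\frac{4\beta}{3\beta-3}}\leqslant\norm{y_h}_{L^2}^{\frac{4\beta}{3\beta-3}-2}\norm{y_h}_{L^6}^{2}$, and then charges the entire loss to measuring $P_hG-G_h$ (resp.\ $\xi_h$) in the higher temporal integrability $L^{\frac{4p\beta}{4\beta-(3\beta-3)p}}$ via \cref{eq:G-Gh-PhG} (resp.\ \cref{eq:Deltah-Delta}). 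Your version is more elementary (it needs only $\sup_t\mathbb E\norm{y_h(t)}_{L^2}^r\leqslant c$), at the cost noted below.

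The one genuine shortfall is the exact exponent in \cref{eq:PhG-Gh-y}. Your H\"older split with exponents $(U,U')$, $U>1$, gives
\[
  \mathbb E\int_0^T \norm{(P_hG-G_h)(t)}_{L^{2\beta'}}^p\norm{y_h(t)}_{L^{2\beta}}^p\,\mathrm dt
  \leqslant c\,h^{-\frac{3p}{2}(1-\frac1\beta)}\,h^{\frac2U},
\]
and since $U'=\infty$ is unavailable, every admissible $U>1$ yields $\frac2U<2$: for each fixed $\beta$ you obtain only $h^{\frac2U-\frac{3p}{2}(1-\frac1\beta)}$, strictly weaker than the claimed $h^{2-\frac{3p}{2}(1-\frac1\beta)}$, which carries no $\epsilon$. (For \cref{eq:love} the same loss is harmlessly absorbed into the explicit $-\epsilon$ in its exponent.) The fix is immediate within your own framework: $P_hG-G_h$ is deterministic, so
\[
  \mathbb E\int_0^T \norm{(P_hG-G_h)(t)}_{L^{2\beta'}}^p\norm{y_h(t)}_{L^{2}}^p\,\mathrm dt
  \leqslant \Big(\sup_{t\in[0,T]}\mathbb E\norm{y_h(t)}_{L^2}^p\Big)\norm{P_hG-G_h}_{L^p(0,T;L^{2\beta'})}^p
  \leqslant c\,h^{2},
\]
using your It\^o-based moment bound and \cref{eq:G-Gh-PhG}, and similarly for the $y$-term via \cref{eq:y-C}; this recovers the stated exponent exactly. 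As written, though, the limiting argument $U\to1^+$ does not prove \cref{eq:PhG-Gh-y}, even if the loss would be immaterial in the downstream application where $\beta$ is sent to $1$.
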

\begin{proof}
  The inequality \cref{eq:G-Gh-PhG} is evident by \cref{eq:foo-conv4},
  and the inequality \cref{eq:z-Phz} follows by \cref{eq:Ph-conv} and
  the regularity assertion \cref{eq:y-G-lplq}.
  The inequality \cref{eq:Deltah-Delta} is derived as follows:
  \begin{align*}
    \norm{\xi_h}_{L^p(\Omega\times(0,T);L^q)} 
    &\leqslant
    c\norm{\Delta_h^{-1}(\Delta_hP_h-P_h\Delta)z}_{L^p(\Omega\times(0,T);\dot H_h^{0,q})}
    \quad\text{(by \cref{lem:S0h})}  \\
    &=
    c\norm{(P_h\Delta^{-1}-\Delta_h^{-1}P_h)\Delta z}_{L^p(\Omega\times(0,T);\dot H_h^{0,q})} \\
    &\leqslant
    ch^{1+2/p-\epsilon} \norm{\Delta z}_{L^p(\Omega\times(0,T);\dot H^{-1+2/p-\epsilon,q})}
    \quad\text{(by \cref{eq:foo-conv2})} \\
    &=
    ch^{1+2/p-\epsilon} \norm{z}_{L^p(\Omega\times(0,T);\dot H^{1+2/p-\epsilon,q})} \\
    &\leqslant
    ch^{1+2/p-\epsilon} \quad
    \text{(by \cref{eq:y-G-lplq})}.
  \end{align*}
  The inequality \cref{eq:Deltah-Delta-2} is derived in a similar manner:
  \begin{align*}
    & \norm{\xi_h}_{L^p(\Omega;C([0,T];L^q))} \\
    \leqslant{}
    & c\norm{(\Delta_hP_h-P_h\Delta)z}_{L^p(\Omega\times(0,T);\dot H_h^{-2+2/p+\epsilon/2,q})}
    \quad\text{(by \cref{lem:S0h,lem:real-complex-discrete})} \\
    ={}
    & c\norm{(P_h\Delta^{-1}-\Delta_h^{-1}P_h)\Delta z}_{L^p(\Omega\times(0,T);\dot H_h^{2/p+\epsilon/2,q})} \\
    \leqslant{}
    & ch^{1-\epsilon} \norm{\Delta z}_{L^p(\Omega\times(0,T); \dot H^{-1+2/p-\epsilon/2,q})} \quad
    \text{(by \cref{eq:foo-conv2})} \\
    ={}
    & ch^{1-\epsilon} \norm{z}_{L^p(\Omega\times(0,T); \dot H^{1+2/p-\epsilon/2,q})} \\
    \leqslant{}
    & ch^{1-\epsilon} \quad \text{(by \cref{eq:y-G-lplq})}.
  \end{align*}
For \cref{eq:y-Phy-y}, we apply H\"older's inequality with $ \alpha \in (1,2) $, yielding
\begin{align*}
  & \mathbb{E} \left[ \int_0^T \norm{(y - P_h y)(t)}_{L^4}^p \norm{y(t)}_{L^8}^{2p} \, \mathrm{d}t \right] \\
  \leqslant{} & \norm{y - P_h y}_{L^{\alpha p}(\Omega \times (0,T); L^4)}^p \norm{y}_{L^{\frac{2\alpha p}{\alpha - 1}}(\Omega \times (0,T); L^8)}^{2p} \\
  \leqslant{} & c \norm{y - P_h y}_{L^{\alpha p}(\Omega \times (0,T); L^4)}^p \quad \text{(by \cref{eq:y-lplq})} \\
  \leqslant{} & c h^{\frac{2}{\alpha}  - \frac{\epsilon}{2} } \norm{y}_{L^{\alpha p}(\Omega \times (0,T); \dot{H}^{\frac{2}{\alpha p} - \frac{\epsilon}{2p},4})}^p \quad \text{(by \cref{eq:Ph-conv})}.
\end{align*}
Choosing $ \alpha > 1 $ sufficiently close to 1 so that $ \frac{2}{\alpha} - \frac{\epsilon}{2} \geqslant 2 - \epsilon $, and using the regularity estimate \eqref{eq:y-lpHq}, we obtain \eqref{eq:y-Phy-y}.

Now, let us prove \cref{eq:PhG-Gh-y}. Fix any $ \beta \in (1,\frac{3p}{3p-4}) $.
By using \cref{eq:Ph-stab} and following the reasoning behind
\cite[Equation (2.2)]{LiuWei2010}, we establish that
\[
  \mathbb{E}\bigg[
    \int_0^T \norm{y_h(t)}_{L^2}^{\frac{4\beta}{3\beta-3}-2} \norm{y_h(t)}_{\dot{H}^{1,2}}^2 \, \mathrm{d}t
  \bigg]
\]
is uniformly bounded with respect to $h$. Using the continuous embedding of
$\dot{H}^{1,2}$ into $L^6$, this uniform bound extends to
\[
  \mathbb{E}\bigg[
    \int_0^T \norm{y_h(t)}_{L^2}^{\frac{4\beta}{3\beta-3}-2} \norm{y_h(t)}_{L^6}^2 \, \mathrm{d}t
  \bigg].
\]
Employing H\"older's inequality, we deduce for all $t \in [0,T]$ that
\[
  \norm{y_h(t)}_{L^{2\beta}}^{\frac{4\beta}{3\beta-3}} \leqslant
  \norm{y_h(t)}_{L^2}^{\frac{4\beta}{3\beta-3}-2} \norm{y_h(t)}_{L^6}^2,
\]
which in turn confirms the uniform bound on
\[
  \norm{y_h}_{L^{\frac{4\beta}{3\beta-3}}(\Omega\times(0,T); L^{2\beta})}.
\]
Subsequently, applying H\"older's inequality once more yields
\begin{align*}
  &\mathbb{E} \bigg[
    \int_0^T \norm{(P_hG-G_h)(t)}_{L^{2\beta'}}^p \norm{y_h(t)}_{2\beta}^p \, \mathrm{d}t
  \bigg] \\
  \leqslant{}
  & \norm{P_hG-G_h}_{L^{\frac{4p\beta}{4\beta-(3\beta-3)p}}(\Omega\times(0,T); L^{2\beta'})}^p
  \norm{y_h}_{L^{\frac{4\beta}{3\beta-3}}(\Omega\times(0,T);L^{2\beta})}^p \\
  \leqslant{}
  & c\norm{P_hG-G_h}_{L^{\frac{4p\beta}{4\beta-(3\beta-3)p}}(\Omega\times(0,T); L^{2\beta'})}^p.
\end{align*}
Using \cref{eq:G-Gh-PhG} with $ p $ replaced by $ \frac{4p\beta}{4\beta - (3\beta - 3)p} $ and $ q $ by $ 2\beta' $, we conclude that
\[
  \mathbb{E}\bigg[
    \int_0^T \norm{(P_hG - G_h)(t)}_{L^{2\beta'}}^p \norm{y_h(t)}_{L^{2\beta}}^p \, \mathrm{d}t
  \bigg]
  \leqslant c h^{2 - \frac{3p}{2}(1 - \frac{1}{\beta})}.
\]
Moreover, by \cref{thm:y-regu}, we have $ y \in L^{\frac{4\beta}{3(\beta - 1)}}(\Omega \times (0,T); L^{2\beta}) $, so a similar argument yields
\[
  \mathbb{E} \bigg[
    \int_0^T \norm{(P_hG - G_h)(t)}_{L^{2\beta'}}^p \norm{y(t)}_{L^{2\beta}}^p \, \mathrm{d}t
  \bigg]
  \leqslant c h^{2 - \frac{3p}{2}(1 - \frac{1}{\beta})}.
\]
Combining both estimates gives \cref{eq:PhG-Gh-y}.
The estimate \cref{eq:love} follows by a similar argument using \cref{eq:Deltah-Delta}.
This completes the proof.

\end{proof}


Finally, we are in a position to prove \cref{thm:conv,thm:conv-smooth}.

\medskip\noindent{\bf Proof of \cref{thm:conv}.}
Fix any $ p \in (2,\infty) $. Define
\begin{align}
  z(t)   & := y(t) - G(t), \quad 0 \leqslant t \leqslant T,
  \label{eq:z-def}                                              \\
  z_h(t) & := y_h(t) - G_h(t), \quad 0 \leqslant t \leqslant T,
  \label{eq:zh-def}
\end{align}
where $ G $ is defined by \cref{eq:G-def}, and $ G_h $ is given by
\begin{equation}
  \label{eq:Gh-def}
  G_h(t) := e^{t\Delta_h} P_h y_0, \quad t \in [0,T].
\end{equation}
From \cref{eq:model} and the fact that $ G'(t) = \Delta G(t) $ for $ t \in [0,T] $, we deduce that
$$
  \mathrm{d}z(t) = \big( \Delta z + z + G - y^3 \big)(t) \, \mathrm{d}t + F(y(t)) \, \mathrm{d}W_H(t),
  \quad 0 \leqslant t \leqslant T.
$$
Consequently,
\begin{align*}
  \mathrm{d}P_hz(t) = P_h(\Delta z + z + G - y^3)(t) \, \mathrm{d}t + P_hF(y(t)) \, \mathrm{d}W_H(t),
  \quad 0 \leqslant t \leqslant T.
\end{align*}
Similarly, from \cref{eq:yh} and the fact that $ G_h'(t) = \Delta_h G_h(t) $ for $ t \in [0,T] $, it follows that
\begin{align*}
  \mathrm{d}z_h(t) =
  (\Delta_h z_h + z_h + G_h - P_hy_h^3)(t) \, \mathrm{d}t +
  P_h F\big( y_h(t) \big) \, \mathrm{d}W_H(t),
  \quad 0 \leqslant t \leqslant T.
\end{align*}
Define $ e_h := z_h - P_h z $. Then we obtain
\begin{equation}
  \label{eq:eh}
  \begin{aligned}
    \mathrm{d}e_h(t)
    & = \Big(
      \Delta_h e_h + \Delta_hP_hz - P_h\Delta z + e_h + G_h-P_hG + P_h\big( y^3-y_h^3 \big)
    \Big)(t) \, \mathrm{d}t + {}       \\
    & \quad P_h \Big(
      F\big(y_h(t)\big) - F\big( y(t) \big)
      \Big) \, \mathrm{d}W_H(t), \quad 0 \leqslant t \leqslant T.
  \end{aligned}
\end{equation}
Now define $ \eta_h := e_h - \xi_h $, where $ \xi_h := S_{0,h}(\Delta_h P_hz - P_h\Delta z) $,
and consider the differential equation
$$
  \Big(\frac{\mathrm{d}}{\mathrm{d}t} - \Delta_h \Big)
  \xi_h(t) = (\Delta_hP_hz - P_h\Delta z)(t), \quad t \geqslant 0.
$$
Using \cref{eq:eh}, we conclude that
\begin{equation}
  \label{eq:etah}
  \begin{aligned}
    \mathrm{d}\eta_h(t)
      & = \Big(
        \Delta_h\eta_h + \xi_h + \eta_h + G_h-P_hG + P_h\big( y^3-y_h^3 \big)
      \Big)(t) \, \mathrm{d}t + {}          \\
      & \quad P_h \Big(
        F\big(y_h(t)\big) - F\big( y(t) \big)
        \Big) \, \mathrm{d}W_H(t), \quad 0 \leqslant t \leqslant T.
  \end{aligned}
\end{equation}
By applying Itô's formula to $ \norm{\eta_h(t)}_{L^2}^p $ and using the initial condition $\eta_h(0)=0$ together with the definition of $ F $ given by \cref{eq:F-def},
we deduce that for any $0 \leqslant t \leqslant T$,
\begin{align*}
  \mathbb E\norm{\eta_h(t)}_{L^2}^p
  & =
  p\mathbb E \biggl[
    \int_0^t \norm{\eta_h(s)}_{L^2}^{p-2}
  \int_\mathcal O \eta_h(s) \cdot \Big( \Delta_h \eta_h(s) + \xi_h(s) + \eta_h(s)  \\
  & \qquad\qquad\qquad\qquad\qquad {} + (G_h-P_hG)(s) +
\big( y^3  - y_h^3 \big)(s) \Big) \, \mathrm{d}x \, \mathrm{d}s
\biggr] \\
  & \qquad {} + \frac{p}2\,\mathbb E \biggl[
    \int_0^t \norm{\eta_h(s)}_{L^2}^{p-2}
    \sum_{k=0}^\infty \norm{P_h\big(f_k(y_h(s))-f_k(y(s))\big)}_{L^2}^2 \, \mathrm{d}s
  \biggr] \\
  & \qquad {}+ \frac{p(p-2)}2 \mathbb E\biggl[
    \int_0^t \norm{\eta_h(s)}_{L^2}^{p-4}
    \sum_{k=0}^\infty \Big(
      \int_\mathcal O \eta_h(s) \cdot \big(f_k(y_h(s)) - f_k(y(s)) \big) \,\mathrm{d}x
    \Big)^2 \, \mathrm{d}s
  \biggr] \\
  &= I_1 + I_2 + I_3 + I_4 + I_5 + I_6,
\end{align*}
where
\begin{align*}
  \allowdisplaybreaks
  I_1 & :=
  p \, \mathbb E \biggl[
    \int_0^t \norm{\eta_h(s)}_{L^2}^{p-2}
    \int_\mathcal O \eta_h(s) \cdot \Delta_h \eta_h(s) \,\mathrm{d}x\,\mathrm{d}s
  \biggr], \\
  I_2 & :=  p \, \mathbb E\biggl[
    \int_0^t \norm{\eta_h(s)}_{L^2}^{p-2}
    \int_\mathcal O \eta_h(s) \cdot  (\xi_h + \eta_h + G_h-P_hG)(s) \, \mathrm{d}x \, \mathrm{d}s
  \biggr], \\
    I_3 & := p\,\mathbb E\biggl[
      \int_0^t \norm{\eta_h(s)}_{L^2}^{p-2}
      \int_\mathcal O \eta_h(s) \cdot \big[ y^3  - (P_hy)^3 \big](s)  \, \mathrm{d}x \, \mathrm{d}s
    \biggr],  \\
      I_4 & := p\,\mathbb E\biggl[
        \int_0^t \norm{\eta_h(s)}_{L^2}^{p-2}
        \int_\mathcal O \eta_h(s) \cdot \big[ (P_hy)^3  - y_h^3 \big](s)  \, \mathrm{d}x \, \mathrm{d}s
      \biggr], \\
        I_5 & := \frac{p}2\,\mathbb E\biggl[
          \int_0^t \norm{\eta_h(s)}_{L^2}^{p-2}
          \sum_{k=0}^\infty \norm{P_h\big(f_k(y_h(s))-f_k(y(s))\big)}_{L^2}^2 \, \mathrm{d}s
        \biggr], \\
          I_6 & := \frac{p(p-2)}2 \mathbb E\biggl[
            \int_0^t \norm{\eta_h(s)}_{L^2}^{p-4}
            \sum_{k=0}^\infty \Big(
              \int_\mathcal O \eta_h(s) \cdot \big(f_k(y_h(s)) - f_k(y(s)) \big)\,\mathrm{d}x
            \Big)^2 \, \mathrm{d}s
          \biggr]. 
\end{align*}

Next, we estimate the terms $ I_1 $, $ I_2 $, $ I_3 $, and $ I_4 $.  
For the term $ I_1 $, by the definition of $ \Delta_h $, we obtain
\begin{equation}
  \label{eq:I1-bnd}
  I_1 = -p \mathbb E \biggl[ \int_0^t \|{\eta_h(s)}\|_{L^2}^{p-2} \|{\eta_h(s)}\|_{\dot H^{1,2}}^2
  \, \mathrm{d}s \biggr].
\end{equation}
Regarding $ I_2 $, applying Hölder's inequality, Young's inequality, and the bounds given in \cref{eq:G-Gh-PhG} and \cref{eq:Deltah-Delta}, we deduce that
\begin{align} 
  I_2 \leqslant ch^2 + c \int_0^t \|{\eta_h(s)}\|_{L^p(\Omega;L^2)}^p \, \mathrm{d}s.
  \label{eq:I2-bnd}
\end{align}
For $ I_3 $, using Hölder’s inequality, Young’s inequality, and the stability property of $ P_h $ stated in \cref{eq:Ph-stab}, a straightforward computation gives
\begin{align*}
  I_3 & \leqslant c \, \mathbb E\bigg[
    \int_0^t \norm{\eta_h(s)}_{L^2}^{p-1} \norm{(y-P_hy)(s)}_{L^4}
    \Big(
      \norm{y(s)}_{L^8}^2 + \norm{P_hy(s)}_{L^8}^2
    \Big) \, \mathrm{d}s
  \bigg] \\
      & \leqslant c \, \mathbb E\bigg[
        \int_0^t \norm{\eta_h(s)}_{L^2}^{p-1} \norm{(y-P_hy)(s)}_{L^4}
        \norm{y(s)}_{L^8}^2 \, \mathrm{d}s                                   
      \bigg] \\
      & \leqslant c \mathbb E\bigg[
        \int_0^t \left( \norm{\eta_h(s)}_{L^2}^p + \norm{(y-P_hy)(s)}_{L^4}^p
        \norm{y(s)}_{L^8}^{2p} \right) \, \mathrm{d}s
      \bigg].
  \end{align*}
  Using the estimate \cref{eq:y-Phy-y}, we then arrive at the bound
  \begin{equation}
    \label{eq:I3-bnd}
    I_3 \leqslant ch^{2-\epsilon} + c \int_0^t
    \norm{\eta_h(s)}_{L^p(\Omega;L^2)}^p
    \, \mathrm{d}s,
    \quad \forall \epsilon > 0.
  \end{equation}
  To estimate $ I_4 $, we proceed as follows.  
  Using the definitions in (\ref{eq:z-def}) and (\ref{eq:zh-def}), together with the identity $ z_h - P_hz = \xi_h + \eta_h $, we find that
  $$
  P_hy - y_h = -\xi_h - \eta_h + P_hG - G_h.
  $$
  Thus, for any $ s \in [0,t] $, we compute:
  \begin{align*}
  &\int_{\mathcal{O}} \eta_h(s) \left[(P_hy)^3 - y_h^3\right](s) \, \mathrm{d}x \\
    ={}& \int_{\mathcal{O}} \eta_h(s) (P_hy - y_h)(s)
    \big[(P_hy)^2 + (P_hy)y_h + y_h^2\big](s) \, \mathrm{d}x \\
    ={}&
    \int_{\mathcal{O}} \eta_h(s) (-\xi_h - \eta_h + P_hG - G_h)(s)
    \big[ (P_hy)^2 + (P_hy)y_h + y_h^2 \big](s) \, \mathrm{d}x \\
    \leqslant{}& c \int_{\mathcal{O}}
    \left[\xi_h^2(s) + (P_hG - G_h)^2(s)\right] \big[ (P_hy)^2 + y_h^2 \big](s) \, \mathrm{d}x.
  \end{align*}
  Hence, applying Hölder’s inequality, Young’s inequality, and the stability property of $ P_h $ from (\ref{eq:Ph-stab}), we derive the following estimate for $ I_4 $:
  \begin{align*}
    I_4
      &\leqslant c\mathbb{E}\int_0^t \bigg[
        \|\eta_h(s)\|_{L^2}^p +
        \left(\|\xi_h(s)\|^{p}_{L^{2\beta'}} 
        + \|(P_hG - G_h)(s)\|^{p}_{L^{2\beta'}}\right) \\
      & \qquad\qquad\qquad {} \times \Big(\|y(s)\|^{p}_{L^{2\beta}} +
    \|y_h(s)\|^{p}_{L^{2\beta}}\Big) \bigg] \, \mathrm{d}s,
    \quad \forall \beta \in \left(1, \frac{3p}{3p - 4}\right),
  \end{align*}
  where $ \beta' $ denotes the H\"older conjugate of $ \beta $.  
  Using the estimates \eqref{eq:PhG-Gh-y} and \eqref{eq:love}, this simplifies further to
  \begin{align*}
    I_4 \leqslant c\int_0^t \|\eta_h(s)\|^p_{L^p(\Omega;L^2)}
    \, \mathrm{d}s + ch^{2-\frac{3}{2}p\left(1-\frac{1}{\beta}\right)},
    \quad \forall \beta \in \left(1, \frac{3p}{3p - 4}\right).
  \end{align*}
  By choosing $ \beta $ sufficiently close to 1 within the interval $\left(1, \frac{3p}{3p - 4}\right)$, for any $ \epsilon > 0 $, we finally obtain
  \begin{equation}
    \label{eq:I4-bnd}
    I_4 \leqslant ch^{2-\epsilon} +
    c\int_0^t \|\eta_h(s)\|^p_{L^p(\Omega;L^2)} \, \mathrm{d}s,
    \quad \forall \epsilon > 0.
  \end{equation}

Now, we proceed to estimate $ I_5 + I_6 $.  
Using Hölder’s inequality and the stability property of $ P_h $ given in \cref{eq:Ph-stab}, we obtain the following bound:
\begin{align*}
  I_5 + I_6 \leqslant c \mathbb{E}\bigg[
    \int_0^t \norm{\eta_h(s)}_{L^2}^{p-2} \sum_{k=0}^\infty \norm{f_k(y_h(s)) - f_k(y(s))}_{L^2}^2 \, \mathrm{d}s
  \bigg].
\end{align*}
Invoking the condition \cref{eq:fny-l2}, we further deduce that
\begin{align*}
  I_5 + I_6 \leqslant c \mathbb{E} \bigg[
    \int_0^t \norm{\eta_h(s)}_{L^2}^{p-2} \norm{(y_h-y)(s)}_{L^2}^2 \, \mathrm{d}s
  \bigg].
\end{align*}
Applying Young’s inequality, we then have
\begin{align*}
  I_5 + I_6 \leqslant c \mathbb{E} \bigg[
    \int_0^t \left( \norm{\eta_h(s)}_{L^2}^p +
    \norm{(y_h-y)(s)}_{L^2}^p \right) \, \mathrm{d}s
  \bigg].
\end{align*}
Recalling the definitions in \cref{eq:z-def,eq:zh-def} and using the identity $ z_h - P_hz = \xi_h + \eta_h $, we infer that
\begin{equation}
  \label{eq:y-yh-decomp}
  (y_h - y)(s) = \xi_h(s) + \eta_h(s) - (z - P_hz)(s) - (G - G_h)(s), \quad \forall s \in [0,T].
\end{equation}
Combining this with the estimates \cref{eq:G-Gh-PhG}, \cref{eq:z-Phz}, and \cref{eq:Deltah-Delta}, we find that
$$
  \mathbb{E} \bigg[ \int_0^t \norm{(y_h - y)(s)}_{L^2}^p \, \mathrm{d}s \bigg]
  \leqslant ch^2 + c\int_0^t \norm{\eta_h(s)}_{L^p(\Omega;L^2)}^p \, \mathrm{d}s.
$$
As a result, we arrive at the following bound for $ I_5 + I_6 $:
\begin{equation}
  \label{eq:I5+I6-bnd}
  I_5 + I_6 \leqslant ch^2 + c\int_0^t
  \norm{\eta_h(s)}_{L^p(\Omega;L^2)}^p \, \mathrm{d}s.
\end{equation}

Finally, by combining the estimates for $I_1$, $I_2$, $I_3$, $I_4$, and $I_5 + I_6$,
as established in \cref{eq:I1-bnd,eq:I2-bnd,eq:I3-bnd,eq:I4-bnd,eq:I5+I6-bnd}, we obtain
\begin{align*}
& \|\eta_h(t)\|_{L^p(\Omega;L^2)}^p +
\mathbb E\bigg[\int_0^t \|\eta_h(s)\|_{L^2}^{p-2} \|\eta_h(s)\|_{\dot H_h^{1,2}}^2 \, \mathrm{d}s\bigg] \\
\leqslant{}
& ch^{2-\epsilon} + c \int_0^t
\|\eta_h(s)\|_{L^p(\Omega;L^2)}^p \, \mathrm{d}s,
\quad \forall t \in (0,T], \, \forall \epsilon > 0.
\end{align*}
An application of Gronwall’s inequality then yields
\[
  \max_{t \in [0,T]} \|\eta_h(t)\|_{L^p(\Omega;L^2)} \leqslant ch^{(2-\epsilon)/p},
  \quad \forall \epsilon > 0.
\]
It follows that
\begin{equation}
  \label{eq:etah-esti}
  \max_{t \in [0,T]} \|\eta_h(t)\|_{L^p(\Omega;L^2)} \leqslant ch^{2/p-\epsilon},
  \quad \forall \epsilon > 0.
\end{equation}
Moreover, for any $\epsilon > 0$ and $t \in (0,T]$, the following bounds hold:
\begin{alignat*}{2}
  & \norm{(z-P_hz)(t)}_{L^p(\Omega;L^2)} \leqslant
  ch^{1-\epsilon},   &\quad & \text{(see \cref{eq:Ph-conv,eq:y-G-C}),} \\
  & \norm{(G-G_h)(t)}_{L^2} \leqslant
  c h^{2/p}t^{-1/p}, &\quad & \text{(see \cref{eq:foo-conv4})}, \\
  & \norm{\xi_h(t)}_{L^p(\Omega;L^2)} \leqslant ch^{1-\epsilon}, &\quad & \text{(see \cref{eq:Deltah-Delta-2})}.
\end{alignat*}
Now, from the decomposition \cref{eq:y-yh-decomp}, we deduce that
\begin{align*}
\|(y-y_h)(t)\|_{L^p(\Omega;L^2)} 
& \leqslant \|(z-P_hz)(t)\|_{L^p(\Omega;L^2)} +
\|(G-G_h)(t)\|_{L^p(\Omega;L^2)} \\
& \qquad {} + \|\xi_h(t)\|_{L^p(\Omega;L^2)} + \|\eta_h(t)\|_{L^p(\Omega;L^2)}, \quad \forall t \in [0,T].
\end{align*}
Combining the above estimates, we conclude that for all $ t \in (0,T] $,
\begin{equation*}
  \norm{(y-y_h)(t)}_{L^p(\Omega;L^2)} \leqslant
  ch^{2/p-\epsilon} + ch^{1-\epsilon} + ch^{2/p}t^{-1/p},
  \quad \forall \epsilon > 0.
\end{equation*}
Since $ h^{1 - \epsilon} $ is absorbed into $ h^{2/p - \epsilon} $ for $ p > 2 $,
the desired error estimate \cref{eq:conv} follows immediately. This completes the proof of \cref{thm:conv}.

\hfill{$\blacksquare$}

\medskip\noindent{\bf Proof of \cref{thm:conv-smooth}.}
Let $ G $ and $ z $ be defined by \cref{eq:G-def,eq:z-def}, respectively.
Since the initial condition $ y_0 \in W_0^{1,\infty}(\mathcal{O}) \cap W^{2,\infty}(\mathcal{O}) $ implies that $ y_0 \in \dot{H}^{2,q} $
for all $ q \in (2,\infty) $, it follows from \cref{lem:etDelta} that $ G \in C([0,T]; \dot{H}^{2,q}) $ for all $ q \in (2,\infty) $.
Consequently, by adapting the argument in Step 3 of the proof of \cref{thm:y-regu}, we obtain the following improved regularity
properties for the processes $ y $ and $ z $:
\begin{align}
   & y \in L_{\mathbb F}^p(\Omega;C([0,T]; \dot H^{2-\epsilon,q})),
   && \forall p,q \in (2,\infty), \, \forall \epsilon \in (0,1);
  \label{eq:y-C-smooth}                                                                                                        \\
   & z \in L_{\mathbb F}^p(\Omega\times(0,T); \dot H^{2,q}),
   && \forall p,q \in (2,\infty);
  \label{eq:z-lplq-smooth}                                                                                                   \\
   & z \in L_{\mathbb F}^p(\Omega;C([0,T]; \dot H^{2-\epsilon,q})),
   && \forall p,q \in (2,\infty), \, \forall \epsilon \in (0,1).
   \label{eq:z-Clq-smooth}
\end{align}
Let $ G_h $ be defined as in \cref{eq:Gh-def}.  
We retain the notation introduced in the proof of \cref{thm:conv}:
\begin{align*}
  z_h &:= y_h - G_h, 
      && e_h := z_h - P_h z, \\
  \xi_h &:= S_{0,h}(\Delta_h P_h - P_h \Delta) z, 
        && \eta_h := e_h - \xi_h.
\end{align*}
The remainder of the proof is structured into the following three steps.

\medskip\noindent\textbf{Step 1.}
Let us present some key estimates that form the foundation of our subsequent analysis.
We begin by establishing the following estimates:
\begin{align}
  & \left\| z - P_hz \right\|_{L^p(\Omega \times (0, T); L^q)} \leqslant ch^2,
  \quad \forall p,q \in (2, \infty),
  \label{eq:z-Phz-smooth} \\
  & \left\| G - G_h \right\|_{C([0, T]; L^q)} +
  \left\| P_hG - G_h \right\|_{C([0, T]; L^q)} \leqslant ch^2, \quad \forall q \in (2, \infty).
  \label{eq:G-Gh-smooth}
\end{align}
Estimate \eqref{eq:z-Phz-smooth} is a direct consequence of \eqref{eq:Ph-conv} combined
with the regularity property \eqref{eq:z-lplq-smooth}.
Estimate \eqref{eq:G-Gh-smooth} follows from \eqref{eq:foo-conv4} and the initial condition
$ y_0 \in W_0^{1,\infty}(\mathcal O) \cap W^{2,\infty}(\mathcal O) $.
Given the regularity \cref{eq:z-lplq-smooth}, estimate \cref{eq:Deltah-Delta} improves to
\begin{equation}
  \label{eq:xih-smooth}
  \left\| \xi_h \right\|_{L^p(\Omega \times (0, T); L^q)} \leqslant c h^2,
  \quad \forall p,q \in (2, \infty),
\end{equation}
and estimate \cref{eq:Deltah-Delta-2} improves to
\begin{equation}
  \label{eq:xih-C-smooth}
  \norm{\xi_h}_{L^p(\Omega; C([0,T]; L^q))} \leqslant c h^{2 - 2/p - \epsilon}, \quad \forall p,q \in (2,\infty), \, \forall \epsilon \in (0,1).
\end{equation}
Following the derivation of estimate \cref{eq:etah-esti} for $\eta_h$, we obtain the improved bound
\begin{equation}
  \label{eq:etah-l2-smooth}
  \max_{0 \leqslant t \leqslant T} \left\| \eta_h(t) \right\|_{L^p(\Omega; L^2)}
  \leqslant c h^2, \quad \forall p \in (2, \infty).
\end{equation}
Therefore, combining estimates \cref{eq:xih-smooth,eq:etah-l2-smooth,eq:G-Gh-smooth,eq:z-Phz-smooth} via the decomposition \cref{eq:y-yh-decomp} yields the error estimate
\begin{equation}
  \label{eq:y-yh-lpCl2-smooth}
  \left\| y - y_h \right\|_{L^p(\Omega \times (0, T); L^2)} \leqslant c h^2, \quad \forall p \in (2, \infty).
\end{equation}
Furthermore, for any $ p \in (2,\infty) $, we deduce the following chain of inequalities:
\begin{align*}
  & \norm{y_h}_{L^p(\Omega\times(0,T);L^\infty)} \\
  ={}
  & \norm{P_hy - P_h(y-y_h)}_{L^p(\Omega\times(0,T);L^\infty)} \\
  \leqslant{}
  & \norm{P_hy}_{L^p(\Omega\times(0,T);L^\infty)} +
  \norm{P_h(y-y_h)}_{L^p(\Omega\times(0,T);L^\infty)} \\
 \stackrel{\mathrm{(i)}}{\leqslant}{}
  & \norm{P_hy}_{L^p(\Omega\times(0,T);L^\infty)} +
  ch^{-3/2}\norm{P_h(y-y_h)}_{L^p(\Omega\times(0,T);L^2)} \\
 \stackrel{\mathrm{(ii)}}{\leqslant}{}
  & c\norm{y}_{L^p(\Omega\times(0,T);L^\infty)} +
  ch^{-3/2}\norm{y-y_h}_{L^p(\Omega\times(0,T);L^2)} \\
 \stackrel{\mathrm{(iii)}}{\leqslant}{}
  & c\norm{y}_{L^p(\Omega\times(0,T);L^\infty)} + ch^{1/2},
\end{align*}
where in (i) we apply the inverse estimate \(\norm{v_h}_{L^\infty} \leqslant ch^{-3/2} \norm{v_h}_{L^2} \) for all \(v_h \in X_h\)
(see, e.g., \cite[Theorem~4.5.11]{Brenner2008});
in (ii) we utilize the stability property of $ P_h $ as stated in \cref{eq:Ph-stab};
and in (iii) we employ the error estimate \cref{eq:y-yh-lpCl2-smooth}.
By \cref{eq:y-C-smooth} and the continuous embedding $ \dot H^{2-\epsilon,q} \hookrightarrow L^\infty $
for sufficiently small $ \epsilon > 0 $,
we conclude $h$-uniform boundedness:
\begin{equation}
  \label{eq:yh-Linfty}
  \norm{y_h}_{L^p(\Omega\times(0,T);L^\infty)}
  \text{ is uniformly bounded with respect to \(h\) for all \(p\in (2,\infty)\)}.
\end{equation}

\medskip\noindent\textbf{Step 2.}  
We now prove the error estimate \cref{eq:y-yh-LpLq}. The proof comprises two parts, (a) and (b).

\textbf{Part (a).} We establish the estimate
\begin{equation}
  \label{eq:etah-L2}
  \| \eta_h \|_{L^p(\Omega \times (0,T); L^6)} \leqslant c h^2,
  \quad \forall p \in (2,\infty).
\end{equation}
Utilizing \eqref{eq:etah} and the definitions of the discrete convolution operators $S_{0,h}$ and $S_{1,h}$
given in \cref{eq:S0h-def,eq:S1h-def}, we derive the decomposition
\begin{align}
  \eta_h = II_1 + II_2,
  \label{eq:etah-decomp}
\end{align}
where
\begin{align*}
  II_1 &:= S_{0,h}\big( \xi_h + \eta_h + G_h - P_hG + P_h(y^3 - y_h^3) \big), \\
  II_2 &:= S_{1,h}P_h\big( F(y_h) - F(y) \big).
\end{align*}
Fix $p \in (2,\infty)$. For $II_1$, applying \cref{lem:Hh-Linfty}(i) and \cref{lem:S0h} yields
\begin{align*}
  \| II_1 \|_{L^p(\Omega \times (0,T); L^\infty)}
  &\leqslant c \| II_1 \|_{L^p(\Omega \times (0,T); \dot{H}_h^{2,5/3})} \\
  &\leqslant c \| \xi_h + \eta_h + G_h - P_hG + P_h(y^3 - y_h^3) \|_{L^p(\Omega \times (0,T); L^{5/3})} \\
  &\leqslant c \| \xi_h \|_{L^p(\Omega \times (0,T); L^{5/3})}
  + c \| \eta_h \|_{L^p(\Omega \times (0,T); L^{5/3})} \\
  & \quad{} + c \| G_h-P_hG \|_{L^p(\Omega \times (0,T); L^{5/3})} 
   + c \| P_h(y^3 - y_h^3) \|_{L^p(\Omega \times (0,T); L^{5/3})}.
\end{align*}
Employing the estimates \cref{eq:etah-l2-smooth,eq:G-Gh-smooth,eq:xih-smooth}
and the embedding $ L^q \hookrightarrow L^{5/3} $ for $ q > 5/3 $, we obtain
\begin{align*}
  \| II_1 \|_{L^p(\Omega \times (0,T); L^\infty)}
  \leqslant c h^2 + \| P_h(y^3 - y_h^3) \|_{L^p(\Omega \times (0,T); L^{5/3})}.
\end{align*}
Since $P_h$ is bounded on $L^{5/3}$ (see \eqref{eq:Ph-stab}), it follows that
\begin{align*}
  \| II_1 \|_{L^p(\Omega \times (0,T); L^\infty)}
  \leqslant c h^2 + \| y^3 - y_h^3 \|_{L^p(\Omega \times (0,T); L^{5/3})}.
\end{align*}
To estimate $\| y^3 - y_h^3 \|_{L^p(\Omega \times (0,T); L^{5/3})}$, we use the identity
\[
  y^3 - y_h^3 = (y - y_h)(y^2 + y y_h + y_h^2)
\]
and apply Hölder's inequality, leading to
\begin{align*}
  \| y^3 - y_h^3 \|_{L^p(\Omega \times (0,T); L^{5/3})}
  \leqslant c \| y - y_h \|_{L^{2p}(\Omega \times (0,T); L^2)}
  \Big( \| y \|_{L^{4p}(\Omega \times (0,T); L^{20})}^2 + \| y_h \|_{L^{4p}(\Omega \times (0,T); L^{20})}^2 \Big).
\end{align*}
By the regularity of $y$ (see \eqref{eq:y-C-smooth}), the stability of $y_h$ (see \eqref{eq:yh-Linfty}), and the error estimate \eqref{eq:y-yh-lpCl2-smooth}, we deduce
\begin{equation}
  \label{eq:y3-yh3-pre}
  \| y^3 - y_h^3 \|_{L^p(\Omega \times (0,T); L^{5/3})} \leqslant c h^2.
\end{equation}
Therefore,
\begin{equation}
  \label{eq:II1}
  \| II_1 \|_{L^p(\Omega \times (0,T); L^\infty)} \leqslant c h^2.
\end{equation}
For $II_2$, the continuous embedding $\dot H^{1,2} \hookrightarrow L^6$ and the isometric embedding of $\dot H_h^{1,2}$ into $\dot H^{1,2}$ imply
\[
  \| II_2 \|_{L^p(\Omega\times(0,T);L^6)}  \leqslant c \| II_2 \|_{L^p(\Omega\times(0,T);\dot H_h^{1,2})}.
\]
Hence, applying \cref{lem:S1h}, the ideal property of $\gamma$-radonifying operators \cite[Theorem 9.1.10]{HytonenWeis2017}, the boundedness of $P_h$ on $L^2$, the Lipschitz condition \eqref{eq:F-lips}, and the estimate \eqref{eq:y-yh-lpCl2-smooth}, we obtain
\begin{align*}
  \| II_2 \|_{L^p(\Omega\times(0,T);L^6)}
  &\leqslant c \| P_h\big(F(y_h) - F(y)\big) \|_{L^p(\Omega\times(0,T);\gamma(H,\dot H_h^{0,2}))} \\
  &\leqslant c \| F(y_h) - F(y) \|_{L^p(\Omega\times(0,T);\gamma(H,L^2))} \\
  &\leqslant c \| y - y_h \|_{L^p(\Omega\times(0,T);L^2)} \\
  &\leqslant c h^2.
\end{align*}
Combining \eqref{eq:II1} with this estimate via the decomposition \eqref{eq:etah-decomp} establishes \eqref{eq:etah-L2}.

\textbf{Part (b).}
We consider the decomposition of $ y_h - y $ given in \cref{eq:y-yh-decomp}, and combine it with the estimates \cref{eq:z-Phz-smooth}, \cref{eq:G-Gh-smooth}, \cref{eq:xih-smooth}, and \cref{eq:etah-L2} to establish \cref{eq:y-yh-LpLq} for the particular case $ p \in (2,\infty) $, $ q = 6 $.
Based on this, we now apply \cref{lem:Hh-Linfty}(ii) and adapt the same bounding strategy used for $ II_2 $ in part (a). This yields an improved estimate:
\begin{align*}
  \norm{II_2}_{L^p(\Omega\times(0,T);L^\infty)}
  & \leqslant c \norm{II_2}_{L^p(\Omega\times(0,T);\dot H_h^{1,6})} \\
  & \leqslant c \norm{P_h(F(y_h)-F(y))}_{L^p(\Omega\times(0,T);\gamma(H,\dot H_h^{0,6}))} \\
  & \leqslant c \norm{F(y_h)-F(y)}_{L^p(\Omega\times(0,T);\gamma(H,L^6))} \\
  & \leqslant c \norm{y_h - y}_{L^p(\Omega\times(0,T);L^6)} \\
  & \leqslant c h^2, \quad \forall p \in (2,\infty).
\end{align*}
Combining this refined bound with the estimate for $ II_1 $ given in \cref{eq:II1}, and recalling the decomposition \cref{eq:etah-decomp}, we deduce that
\begin{equation}
  \label{eq:etah-lplq}
  \norm{\eta_h}_{L^p(\Omega \times (0,T); L^\infty)} \leqslant c h^2, \quad \forall p \in (2,\infty).
\end{equation}
Finally, revisiting the decomposition \cref{eq:y-yh-decomp} and combining the estimates \cref{eq:z-Phz-smooth}, \cref{eq:G-Gh-smooth}, \cref{eq:xih-smooth}, and \cref{eq:etah-lplq}, we conclude the error estimate \cref{eq:y-yh-LpLq} for all $ p, q \in (2, \infty) $.

\medskip\noindent\textbf{Step 3.}
We now establish the error estimate \cref{eq:y-yh-LpLinfty}, proceeding in three parts, (a)–(c).

\textbf{Part (a).}
We aim to prove the inequality
\begin{equation}
  \label{eq:eh-lpC}
  \| \eta_h \|_{L^p(\Omega; C([0,T]; L^\infty))} \leqslant c h^2,
  \quad \forall p \in (4,\infty).
\end{equation}
To this end, fix $ p \in (4, \infty) $ and let $ q \in (6, \infty) $ be arbitrary.
Recall the decomposition \cref{eq:etah-decomp} for $ \eta_h $. By \cref{lem:Hh-Linfty}(ii) and \cref{lem:real-complex-discrete}, we have
\begin{align*}
  \| II_1 \|_{L^p(\Omega; C([0,T]; L^\infty))}
  &\leqslant \| II_1 \|_{L^p(\Omega; C([0,T]; \dot H_h^{2 - 3/p, q}))} \\
  &\leqslant \| II_1 \|_{L^p(\Omega; C([0,T]; (\dot H_h^{0,q}, \dot H_h^{2,q})_{1/2 - 1/p,p}))}.
\end{align*}
An application of \cref{lem:S0h} then gives
\begin{align*}
  \| II_1 \|_{L^p(\Omega; C([0,T]; L^\infty))}
  &\leqslant c \| \xi_h + \eta_h + G_h - P_h G + P_h(y^3 - y_h^3) \|_{L^p(\Omega \times (0,T); L^q)}.
\end{align*}
Using the estimates \cref{eq:G-Gh-smooth}, \cref{eq:xih-smooth}, and \cref{eq:etah-lplq}, we deduce
\begin{align*}
  \| II_1 \|_{L^p(\Omega; C([0,T]; L^\infty))}
  &\leqslant c h^2 + c \| P_h(y^3 - y_h^3) \|_{L^p(\Omega \times (0,T); L^q)}.
\end{align*}
Since the projection operator $ P_h $ is bounded on $ L^q $ (cf. \cref{eq:Ph-stab}), it follows that
\begin{align*}
  \| II_1 \|_{L^p(\Omega; C([0,T]; L^\infty))}
  &\leqslant c h^2 + c \| y^3 - y_h^3 \|_{L^p(\Omega \times (0,T); L^q)}.
\end{align*}
As in \cref{eq:y3-yh3-pre}, we use the identity
\[
  y^3 - y_h^3 = (y - y_h)(y^2 + y y_h + y_h^2)
\]
and apply Hölder’s inequality to obtain
\[
  \| y^3 - y_h^3 \|_{L^p(\Omega \times (0,T); L^q)} \leqslant c \| y - y_h \|_{L^{2p}(\Omega \times (0,T); L^{2q})}.
\]
Therefore,
\begin{align*}
  \| II_1 \|_{L^p(\Omega; C([0,T]; L^\infty))}
  &\leqslant c h^2 + c \| y - y_h \|_{L^{2p}(\Omega \times (0,T); L^{2q})}.
\end{align*}
Next, we estimate $ II_2 $ from \cref{eq:etah-decomp}.
Applying \cref{lem:real-complex-discrete} and \cref{lem:Hh-Linfty}(ii), we get
\begin{align*}
  \| II_2 \|_{L^p(\Omega; C([0,T]; L^\infty))}
  &\leqslant \| II_2 \|_{L^p(\Omega; C([0,T]; (\dot H_h^{0,q}, \dot H_h^{2,q})_{1/2 - 1/p,p}))} \\
  &\leqslant c \| P_h(F(y_h) - F(y)) \|_{L^p(\Omega \times (0,T); \gamma(H, \dot H_h^{0,q}))},
\end{align*}
where the second inequality follows from \cref{lem:S1h}. As in the estimate of $ II_2 $ in Step 2, Part (a), we then obtain
\[
  \| II_2 \|_{L^p(\Omega; C([0,T]; L^\infty))}
  \leqslant c \| y - y_h \|_{L^p(\Omega \times (0,T); L^q)}.
\]
Combining the bounds for $ II_1 $ and $ II_2 $, and recalling the decomposition \cref{eq:etah-decomp}, we arrive at
\begin{align*}
  \| \eta_h \|_{L^p(\Omega; C([0,T]; L^\infty))}
  &\leqslant c h^2 + c \| y - y_h \|_{L^p(\Omega \times (0,T); L^q)} + c \| y - y_h \|_{L^{2p}(\Omega \times (0,T); L^{2q})}.
\end{align*}
Now, since the error estimate \cref{eq:y-yh-LpLq} has already been established for all $ p, q \in (2,\infty) $, we conclude that the right-hand side is of order $ h^2 $. This completes the proof of \cref{eq:eh-lpC}.

\textbf{Part (b).}
We now prove the estimate
\begin{equation}
  \label{eq:y-yh-lpC-lq}
  \| y - y_h \|_{L^p(\Omega; C([0,T]; L^q))} \leqslant c h^{2 - 2/p - \varepsilon},
  \quad \forall p, q \in (2,\infty),\ \forall \varepsilon \in (0,1).
\end{equation}
By the approximation property \cref{eq:Ph-conv} and the regularity result \cref{eq:z-Clq-smooth}, we have
\begin{equation}
  \label{eq:z-Phz-lpC}
  \| z - P_h z \|_{L^p(\Omega; C([0,T]; L^q))} \leqslant c h^{2 - \varepsilon},
  \quad \forall p, q \in (2,\infty),\ \forall \varepsilon \in (0,1).
\end{equation}
Moreover, combining the approximation result \cref{eq:G-Gh-smooth} with the bound \cref{eq:eh-lpC}, we deduce
\begin{equation}
  \label{eq:G-Gh-eta}
  \| G - G_h + \eta_h \|_{L^p(\Omega; C([0,T]; L^q))} \leqslant c h^2,
  \quad \forall p, q \in (2,\infty).
\end{equation}
Using the decomposition \cref{eq:y-yh-decomp}, we write
\begin{align*}
  \| y - y_h \|_{L^p(\Omega; C([0,T]; L^q))}
  &\leqslant \| \xi_h \|_{L^p(\Omega; C([0,T]; L^q))}
  + \| z - P_h z \|_{L^p(\Omega; C([0,T]; L^q))} \\
  &\quad + \| G - G_h + \eta_h \|_{L^p(\Omega; C([0,T]; L^q))}.
\end{align*}
Substituting the bounds \cref{eq:z-Phz-lpC,eq:G-Gh-eta} into this inequality, we obtain
\begin{align*}
  \| y - y_h \|_{L^p(\Omega; C([0,T]; L^q))}
  &\leqslant c h^{2 - \varepsilon} + \| \xi_h \|_{L^p(\Omega; C([0,T]; L^q))},
  \quad \forall p, q \in (2,\infty),\ \forall \varepsilon \in (0,1).
\end{align*}
Finally, invoking the estimate \cref{eq:xih-C-smooth}, we conclude the desired result \cref{eq:y-yh-lpC-lq}.

\textbf{Part (c).} 
Let $\Pi_h$ denote the standard Lagrange interpolation operator onto the finite element space $X_h$.
Combining the regularity result \cref{eq:y-C-smooth} with standard interpolation theory yields
\begin{align*}
  \norm{y - \Pi_h y}_{L^p(\Omega; C([0,T]; L^q))} &\leqslant c h^{2-\epsilon},
  \quad \forall p,q \in (2,\infty), \, \forall \epsilon \in (0,1).
\end{align*}
The triangle inequality applied to this estimate and \cref{eq:y-yh-lpC-lq} then gives
\begin{align*}
  \norm{y_h - \Pi_h y}_{L^p(\Omega; C([0,T]; L^q))} &\leqslant c h^{2-2/p-\epsilon},
  \quad \forall p,q \in (2,\infty), \, \forall \epsilon \in (0,1).
\end{align*}
Applying the standard finite element inverse estimate (cf. \cite[Theorem~4.5.11]{Brenner2008})
\begin{equation*}
  \norm{v_h}_{L^\infty} \leqslant c h^{-3/q} \norm{v_h}_{L^q}, \quad \forall v_h \in X_h, \, q \in (2,\infty),
\end{equation*}
to the preceding bound implies
\begin{align*}
  \norm{y_h - \Pi_h y}_{L^p(\Omega; C([0,T]; L^\infty))} &\leqslant c h^{2 - 2/p - 3/q - \epsilon},
  \quad \forall p,q \in (2,\infty), \, \forall \epsilon \in (0,1).
\end{align*}
Furthermore, the regularity \cref{eq:y-C-smooth} combined with standard interpolation estimates yields
\begin{align*}
  \norm{y - \Pi_h y}_{L^p(\Omega; C([0,T]; L^\infty))} &\leqslant c h^{2 - 3/q - \epsilon},
  \quad \forall p \in (2,\infty), \, \forall q \in (3,\infty), \, \forall \epsilon \in (0,1).
\end{align*}
Applying the triangle inequality to these last two estimates now provides
\begin{align*}
  \norm{y - y_h}_{L^p(\Omega; C([0,T]; L^\infty))}
  &\leqslant c h^{2-2/p-3/q-\epsilon}, \quad \forall p,q \in (2,\infty), \, \forall \epsilon \in (0,1).
\end{align*}
Since $p, q \in (2,\infty)$ can be chosen arbitrarily large, the term $2/p + 3/q$ can be made arbitrarily small and is therefore absorbed into $\epsilon > 0$. This yields the desired error estimate \cref{eq:y-yh-LpLinfty}, completing the proof of \cref{thm:conv-smooth}.

\hfill{$\blacksquare$}

\section{Conclusions}
\label{sec:conclusion}
In this work, we have rigorously investigated the regularity properties of the three-dimensional stochastic Allen-Cahn equation with
multiplicative noise, in the presence of non-smooth initial data. For the spatial semidiscretization using the finite element method,
we have established a convergence rate of \( O(h^{2/p-\epsilon} + h^{2/p}t^{-1/p}) \) with non-smooth initial data.
Meanwhile, under the condition of smooth initial data, employing the discrete stochastic maximal \( L^p \)-regularity theory,
we have achieved a convergence rate of \( O(h^2) \) in the \( L^p(\Omega \times (0,T); L^q) \) norm,
along with a pathwise uniform convergence rate of \( O(h^{2-\epsilon}) \).

The theoretical results derived here lay a foundation for the numerical analysis of fully discrete approximations based on
the finite element method for the three-dimensional stochastic Allen-Cahn equation with multiplicative noise,
within the context of general spatial \( L^q \)-norms.

\appendix

\section{Proofs of \texorpdfstring{\cref{lem:Hh-Linfty,lem:etDeltah}}{}}
\label{sec:appendix}

The discrete Laplace operator $ \Delta_h $ has the following fundamental property; see \cite{Bakaev2002}.
\begin{lemma}
  \label{lem:Deltah-resolvent}
  For any $q \in (1,\infty)$ and $\theta \in (\pi/2, \pi)$, there exists a constant $c > 0$ independent of $h$, depending only on $\theta$, $q$, and the regularity constants of $\mathcal{K}_h$, such that
  \[
    \left\| (re^{i\omega} - \Delta_h)^{-1} \right\|_{\mathcal{L}(\dot{H}_h^{0,q})} \leqslant \frac{c}{1 + r},
     \quad \forall r \geqslant 0, \quad \forall \omega \in [-\theta, \theta].
  \]
\end{lemma}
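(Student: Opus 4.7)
The plan is to establish the uniform-in-$h$ sectorial estimate by combining the self-adjoint case $q=2$ with a resolvent comparison between the discrete and continuous operators for general $q$.

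First I would handle the $q=2$ case directly: since $-\Delta_h$ is self-adjoint and positive on $X_h$ equipped with the $L^2$-inner product, its spectrum lies in $[0,\infty)$, and the functional calculus immediately gives $\|(re^{i\omega}-\Delta_h)^{-1}\|_{\mathcal{L}(L^2)} \leqslant 1/\mathrm{dist}(re^{i\omega},[0,\infty)) \leqslant C_\theta/(1+r)$ for all $\omega \in [-\theta,\theta]$, with a constant $C_\theta$ that blows up only as $\theta \to \pi$.

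For general $q \in (1,\infty)$, set $z := re^{i\omega}$ and, for arbitrary $f_h \in X_h$, define $v_h := (z-\Delta_h)^{-1} f_h$ and $v := (z-\Delta)^{-1} f_h$. The classical sectoriality of $-\Delta$ under homogeneous Dirichlet boundary conditions on a smooth domain (a consequence of $L^q$-elliptic regularity) yields $\|v\|_{L^q} \leqslant c(1+r)^{-1}\|f_h\|_{L^q}$, so the problem reduces to controlling $\|v_h - v\|_{L^q}$ uniformly in both $r$ and $h$. A natural dichotomy is $r \leqslant h^{-2}$ versus $r > h^{-2}$: in the large-$r$ regime the multiplication by $z$ dominates, and the bound follows from the $L^q$-stability of $P_h$ (see \cref{eq:Ph-stab}) combined with the discrete inverse inequality $\|\Delta_h w_h\|_{L^q} \leqslant ch^{-2}\|w_h\|_{L^q}$; the small-$r$ regime would be treated by finite element error analysis for the shifted Poisson problem $(z-\Delta)u=f$ in weighted norms, using a weight of the form $\sigma(x) := (\kappa^2 + |x-x_0|^2)^{s}$ tuned to the intrinsic length scale $|z|^{-1/2}$.

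The hard part is the small-$r$ regime, where uniform-in-$h$ and uniform-in-$z$ pointwise estimates on the discrete Green's function associated with $z-\Delta_h$ are required. Standard Aubin-Nitsche duality yields optimal $L^2$-error bounds without difficulty, but extending these to $L^q$ for $q \neq 2$, and in particular to the limiting case $q=\infty$ from which the rest follows by interpolation with the $L^2$-bound and duality at the exponent $q'$, requires the weighted pointwise Green's function theory of Schatz-Wahlbin-Thom\'{e}e type, carefully adapted to handle the complex resolvent parameter. The delicate tracking of $z$-dependencies through each weighted approximation constant is the technical crux of Bakaev's argument, and executing it cleanly is the main obstacle in writing out a self-contained proof.
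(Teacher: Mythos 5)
For context: the paper does not prove this lemma at all --- it is stated with a bare citation to \cite{Bakaev2002}, so there is no in-paper argument to compare against. Your outline correctly reconstructs the architecture of the proof in that reference: sectoriality of the continuous resolvent reduces the claim to an $L^q$ error estimate for the finite element approximation of the complex-shifted problem; the regime $r \gtrsim h^{-2}$ is elementary via a Neumann series for $(I - z^{-1}\Delta_h)^{-1}$ and the inverse inequality; and the substance lies in a weighted, Schatz--Wahlbin-type Green's function analysis uniform in the complex parameter, from which general $q$ follows by interpolation with the $L^2$ bound and duality.

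The gap is the one you name yourself: the estimate $\| v_h - v \|_{L^q} \leqslant c h^2 \| f_h \|_{L^q}$ for the shifted problem, uniformly over $z = re^{i\omega}$ in the sector and over $h$ --- which, combined with $h^2 \leqslant r^{-1}$ in the regime $r \leqslant h^{-2}$, is precisely what produces the $(1+r)^{-1}$ decay --- \emph{is} the theorem of Bakaev--Thom\'ee--Wahlbin, and your proposal defers it rather than proving it. As a self-contained argument the proposal is therefore incomplete, though it is no less complete than the paper's own treatment, which consists of the citation. Two smaller corrections. First, in the $q=2$ step the relevant quantity is $1/\mathrm{dist}(re^{i\omega}, \sigma(\Delta_h))$ with $\sigma(\Delta_h) \subset (-\infty, -\lambda_1]$, not $1/\mathrm{dist}(re^{i\omega}, [0,\infty))$; as written your bound degenerates at $\omega = 0$, and the ``$1$'' in $1+r$ requires the $h$-uniform spectral gap $\lambda_1(-\Delta_h) \geqslant \lambda_1(-\Delta) > 0$, which follows from $X_h \subset \dot{H}^{1,2}$ and the min--max principle but needs to be invoked. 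Second, in the small-$r$ regime it is not enough to control $\| v_h - v \|_{L^q}$ ``uniformly in $r$ and $h$'': a merely uniform bound loses the decay for $1 \ll r \leqslant h^{-2}$, so the error estimate must carry the factor $h^2$ (equivalently $r^{-1}$) explicitly.
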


\begin{lemma}
  \label{lem:Ph-stab-extend}
  For any $\alpha \in [0,2]$ and $q \in (1,\infty)$, the operator norm
  $ \|P_h\|_{\mathcal{L}(\dot H^{\alpha,q}, \dot H_h^{\alpha,q})} $
  is uniformly bounded with respect to $h$.
\end{lemma}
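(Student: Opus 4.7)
The plan is to prove the claim at the two endpoints $\alpha = 0$ and $\alpha = 2$, and then obtain the intermediate values by complex interpolation. The case $\alpha = 0$ reduces to the uniform $L^q$-stability of $P_h$ given in \cref{eq:Ph-stab}. The case $\alpha = 2$ amounts to establishing the bound $\norm{\Delta_h P_h v}_{L^q} \leqslant c\norm{v}_{\dot H^{2,q}}$ uniformly in $h$, for all $q \in (1,\infty)$.

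For the $\alpha = 2$ step, I would use the splitting
\[
  \Delta_h P_h v = P_h \Delta v + \Delta_h \bigl(P_h v - \Delta_h^{-1} P_h \Delta v\bigr).
\]
The first term is bounded by $c\norm{v}_{\dot H^{2,q}}$ by the $L^q$-stability \cref{eq:Ph-stab} of $P_h$. For the second term, I would combine \cref{eq:Ritz-conv-H2} with the approximation estimate \cref{eq:Ph-conv} at $\alpha=2$ to obtain
\[
  \nmb{P_h v - \Delta_h^{-1} P_h \Delta v}_{L^q} \leqslant c h^2 \norm{v}_{\dot H^{2,q}}, \quad q \in (1,\infty),
\]
and then invoke the inverse estimate $\norm{\Delta_h w_h}_{L^q} \leqslant c h^{-2}\norm{w_h}_{L^q}$ (the $\alpha=0$, $\beta=2$ instance of \cref{eq:inverse}) to absorb the factor $h^{-2}$ exactly against the $h^2$ from the consistency bound. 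This yields $\norm{\Delta_h(P_h v - \Delta_h^{-1}P_h\Delta v)}_{L^q} \leqslant c\norm{v}_{\dot H^{2,q}}$, completing the $\alpha = 2$ case.

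For $\alpha \in (0,2)$, I would use the complex interpolation theorem for linear operators. The paper has already recorded that $\dot H^{\alpha,q} = [L^q, \dot H^{2,q}]_{\alpha/2}$ with equivalent norms, and, based on the $h$-uniform boundedness of the purely imaginary powers of $-\Delta_h$ from \cite[Theorem~3.1(ii)]{LiLpSpatail2023}, that $\dot H_h^{\alpha,q}$ is isomorphic to $[\dot H_h^{0,q}, \dot H_h^{2,q}]_{\alpha/2}$ with constants of equivalence independent of $h$. Since we have verified uniform boundedness of $P_h$ at the two endpoints, interpolation gives the $h$-uniform bound $\norm{P_h}_{\mathcal L(\dot H^{\alpha,q}, \dot H_h^{\alpha,q})} \leqslant c$ for every $\alpha \in (0,2)$ and $q \in (1,\infty)$.

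The main obstacle is the $\alpha = 2$ step, specifically making sure the cancellation of $h^2$ and $h^{-2}$ is clean for the full range $q \in (1,\infty)$: \cref{eq:Ph-DeltahPhDelta} is stated only for $q \in [2,\infty)$, so I would not cite it directly and instead reconstruct the consistency estimate from \cref{eq:Ritz-conv-H2} and \cref{eq:Ph-conv}, both of which are available for all $q \in (1,\infty)$. The quasi-uniformity of $\mathcal K_h$ is essential here, since it is what makes the inverse estimate \cref{eq:inverse} $h$-uniform.
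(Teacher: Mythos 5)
Your proposal is correct and follows essentially the same route as the paper: the endpoint $\alpha=2$ is handled by splitting off $\Delta_h^{-1}P_h\Delta$ (your identity $\Delta_h P_h v = P_h\Delta v + \Delta_h(P_h v - \Delta_h^{-1}P_h\Delta v)$ is exactly the paper's triangle inequality in $\dot H_h^{2,q}$), the $h^2$ consistency bound from \cref{eq:Ph-conv} and \cref{eq:Ritz-conv-H2} cancels against the $h^{-2}$ inverse estimate, and the intermediate range follows by complex interpolation. Your observation that one should avoid \cref{eq:Ph-DeltahPhDelta} (restricted to $q\geqslant 2$) and instead reassemble the estimate from the two full-range ingredients matches the paper's proof precisely.
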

\begin{proof}
  Fix $q \in (1,\infty)$. By the estimates \cref{eq:Ph-conv,eq:Ritz-conv-H2}, and applying the triangle inequality, we obtain
  \[
    \norm{P_{h} - \Delta_{h}^{-1}P_{h}\Delta}_{\mathcal{L}(\dot H^{2,q}, L^{q})} \leqslant c h^{2}.
  \]
  Using the inverse estimate \eqref{eq:inverse}, it follows that
  \[
    \norm{P_{h} - \Delta_{h}^{-1}P_{h}\Delta}_{\mathcal{L}(\dot H^{2,q}, \dot H_{h}^{2,q})} \leqslant c.
  \]
  On the other hand, the stability estimate \eqref{eq:Ph-stab} gives
  \[
    \norm{\Delta_{h}^{-1}P_{h}\Delta}_{\mathcal{L}(\dot H^{2,q}, \dot H_{h}^{2,q})} = \norm{P_{h}\Delta}_{\mathcal{L}(\dot H^{2,q}, L^{q})} \leqslant c\norm{\Delta}_{\mathcal{L}(\dot H^{2,q}, L^{q})} = c,
  \]
  where we have used the fact that $\Delta: \dot H^{2,q} \to L^q$ is an isometric isomorphism.
  Combining the above bounds and applying the triangle inequality, we conclude that
  \[
    \norm{P_{h}}_{\mathcal{L}(\dot H^{2,q}, \dot H_{h}^{2,q})} \leqslant c.
  \]
  This proves the uniform boundedness for $\alpha = 2$.
  The case $\alpha = 0$ is already established by \eqref{eq:Ph-stab}.
  The general case $\alpha \in [0,2]$ follows from complex interpolation between the endpoint cases $\alpha = 0$
  and $\alpha = 2$, which completes the proof.
\end{proof}

By the approximation property of $ P_h $ in \cref{eq:Ph-conv} and the smoothing estimate for $ e^{t\Delta} $ in \cref{lem:etDelta},
a direct calculation yields the following lemma.
\begin{lemma}
  \label{lem:I-Ph-etDelta}
  For any $\alpha \in [0,2]$, $q \in (1,\infty)$, and $t > 0$, 
  \begin{equation}
    \label{eq:I-Ph-etDelta}
    \left\| (I - P_h)e^{t\Delta} \right\|_{\mathcal{L}(\dot{H}^{\alpha,q}, L^q)} 
    \leqslant c \min\left\{ h^\alpha,\, \frac{h^2}{t^{1 - \alpha/2}} \right\}.
  \end{equation}
\end{lemma}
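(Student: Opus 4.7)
\textbf{Proof proposal for \cref{lem:I-Ph-etDelta}.}
The plan is to establish the two bounds
\[
  \norm{(I-P_h)e^{t\Delta}}_{\mathcal{L}(\dot H^{\alpha,q}, L^q)} \leqslant c h^\alpha
  \quad \text{and} \quad
  \norm{(I-P_h)e^{t\Delta}}_{\mathcal{L}(\dot H^{\alpha,q}, L^q)} \leqslant c\, h^2 t^{-(1-\alpha/2)}
\]
separately and then take their minimum. Each bound factors through an intermediate space where \cref{eq:Ph-conv} and the smoothing estimate from \cref{lem:etDelta} can be applied directly.

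For the first bound, I would factor $(I-P_h)e^{t\Delta} = (I-P_h)\circ e^{t\Delta}$ and apply \cref{eq:Ph-conv} at the exponent $\alpha$, yielding
\[
  \norm{(I-P_h)e^{t\Delta}v}_{L^q}
  \leqslant \norm{I-P_h}_{\mathcal{L}(\dot H^{\alpha,q},L^q)} \norm{e^{t\Delta}v}_{\dot H^{\alpha,q}}
  \leqslant c h^\alpha \norm{e^{t\Delta}v}_{\dot H^{\alpha,q}}.
\]
Here the uniform boundedness of $e^{t\Delta}$ on $\dot H^{\alpha,q}$ comes from the identity $(-\Delta)^{\alpha/2}e^{t\Delta} = e^{t\Delta}(-\Delta)^{\alpha/2}$ combined with \cref{lem:etDelta} at exponent zero, which gives $\norm{e^{t\Delta}v}_{\dot H^{\alpha,q}} \leqslant c\norm{v}_{\dot H^{\alpha,q}}$ uniformly in $t$.

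For the second bound, I would instead use \cref{eq:Ph-conv} at the endpoint $\alpha=2$ and rely on the parabolic smoothing of the semigroup. Writing
\[
  \norm{(I-P_h)e^{t\Delta}v}_{L^q}
  \leqslant \norm{I-P_h}_{\mathcal{L}(\dot H^{2,q},L^q)} \norm{e^{t\Delta}v}_{\dot H^{2,q}}
  \leqslant c h^2 \norm{(-\Delta)^{1-\alpha/2}e^{t\Delta}(-\Delta)^{\alpha/2}v}_{L^q},
\]
and applying \cref{lem:etDelta} with exponent $1-\alpha/2 \in [0,1]$ yields the factor $t^{-(1-\alpha/2)}$ and hence the desired bound $c h^2 t^{-(1-\alpha/2)}\norm{v}_{\dot H^{\alpha,q}}$. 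Combining the two bounds by taking the pointwise minimum gives \cref{eq:I-Ph-etDelta}.

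There is no significant obstacle: the only subtlety is the routine verification that $(-\Delta)^{\alpha/2}$ commutes with $e^{t\Delta}$ on the appropriate domain so that the factorizations above are legitimate, which is standard from the functional calculus for sectorial operators. The argument is short and essentially a combination of two pieces of information already available in \cref{eq:Ph-conv} and \cref{lem:etDelta}.
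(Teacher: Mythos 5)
Your proposal is correct and is exactly the ``direct calculation'' the paper alludes to: the paper gives no written proof of \cref{lem:I-Ph-etDelta}, stating only that it follows from \cref{eq:Ph-conv} and \cref{lem:etDelta}, and your two factorizations (through $\dot H^{\alpha,q}$ for the $h^\alpha$ bound and through $\dot H^{2,q}$ with the smoothing estimate at exponent $1-\alpha/2$ for the $h^2 t^{-(1-\alpha/2)}$ bound) are precisely the intended argument.
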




\subsection{Proof of \texorpdfstring{\cref{lem:Hh-Linfty}}{}}
We only prove (ii), as (i) follows by similar arguments using \cref{eq:Ritz-conv-H2} instead of \cref{eq:Ritz}.
Fix $ \alpha \in (0,2) $ and $ q \in (3/\alpha,\infty) \cap [2,\infty) $, and let $ v_h \in X_h $ be arbitrary.  
By \cref{eq:Ritz}, we have
$$
  \|v_h - \Delta^{-1} \Delta_h v_h\|_{L^q}
  = \|-(I - \Delta_h^{-1}P_h\Delta)\Delta^{-1} \Delta_h v_h\|_{L^q}
  \leqslant c h^\alpha \|\Delta^{-1} \Delta_h v_h\|_{\dot H^{\alpha,q}}.
$$
Moreover, \cref{eq:Ph-conv} gives
$$
  \|(I - P_h)\Delta^{-1} \Delta_h v_h\|_{L^q} \leqslant c h^\alpha \|\Delta^{-1} \Delta_h v_h\|_{\dot H^{\alpha,q}}.
$$
Combining these two estimates via the triangle inequality yields
$$
  \|v_h - P_h \Delta^{-1} \Delta_h v_h\|_{L^q} \leqslant c h^\alpha \|\Delta^{-1} \Delta_h v_h\|_{\dot H^{\alpha,q}}.
$$
Applying a standard inverse estimate \cite[Theorem~4.5.11]{Brenner2008}, we deduce
$$
  \|v_h - P_h \Delta^{-1} \Delta_h v_h\|_{L^\infty} \leqslant c h^{\alpha - 3/q} \|\Delta^{-1} \Delta_h v_h\|_{\dot H^{\alpha,q}}.
$$
Since $ \alpha > 3/q $, it follows that
$$
  \|v_h - P_h \Delta^{-1} \Delta_h v_h\|_{L^\infty} \leqslant c \|\Delta^{-1} \Delta_h v_h\|_{\dot H^{\alpha,q}}.
$$
The Sobolev embedding $ \dot H^{\alpha,q} \hookrightarrow L^\infty $, which holds due to $ \alpha q > 3 $, together with the $ L^\infty $-stability of $ P_h $ (see \cref{eq:Ph-stab}), implies
$$
  \|P_h \Delta^{-1} \Delta_h v_h\|_{L^\infty} \leqslant c \|\Delta^{-1} \Delta_h v_h\|_{\dot H^{\alpha,q}}.
$$
Applying the triangle inequality once more, we obtain
$$
  \|v_h\|_{L^\infty} \leqslant c \|\Delta^{-1} \Delta_h v_h\|_{\dot H^{\alpha,q}} = c \|\Delta_h v_h\|_{\dot H^{\alpha-2,q}}.
$$
To estimate $ \|\Delta_h v_h\|_{\dot H^{\alpha-2,q}} $, we use the dual characterization
$$
  \|\Delta_h v_h\|_{\dot H^{\alpha-2,q}} = \sup_{\substack{\varphi \in \dot H^{2-\alpha,q'} \\ \varphi \neq 0}}
  \frac{\langle \Delta_h v_h, \varphi \rangle}{\|\varphi\|_{\dot H^{2-\alpha,q'}}}.
$$
Since $ \Delta_h v_h \in X_h $, we may replace $ \varphi $ with $ P_h \varphi $, yielding
$$
  \|\Delta_h v_h\|_{\dot H^{\alpha-2,q}} = \sup_{\substack{\varphi \in \dot H^{2-\alpha,q'} \\ \varphi \neq 0}}
  \frac{\int_{\mathcal{O}} (-\Delta_h)^{\alpha/2} v_h \cdot (-\Delta_h)^{1-\alpha/2} P_h \varphi \,\mathrm{d}x}{\|\varphi\|_{\dot H^{2-\alpha,q'}}}.
$$
By Hölder's inequality and \cref{lem:Ph-stab-extend}, we deduce
$$
  \|\Delta_h v_h\|_{\dot H^{\alpha-2,q}} \leqslant \sup_{\substack{\varphi \in \dot H^{2-\alpha,q'} \\ \varphi \neq 0}}
  \frac{\|v_h\|_{\dot H_h^{\alpha,q}} \|P_h \varphi\|_{\dot H_h^{2-\alpha,q'}}}{\|\varphi\|_{\dot H^{2-\alpha,q'}}}
  \leqslant c \|v_h\|_{\dot H_h^{\alpha,q}}.
$$
Combining the above estimates, we conclude
$$
  \|v_h\|_{L^\infty} \leqslant c \|v_h\|_{\dot H_h^{\alpha,q}},
$$
which completes the proof of (ii), and hence the lemma.

  \hfill{$\blacksquare$}

\subsection{Proof of \texorpdfstring{\cref{lem:etDeltah}}{}}
Property (i) follows directly from Proposition 2.5 and Equation (2.128) in \cite{Yagi2010}, in light of \cref{lem:Deltah-resolvent}. To prove property (ii), let $\alpha \in [0,2]$, $q \in (1,\infty)$, and $t > 0$ be arbitrary. By \cref{lem:I-Ph-etDelta}, the desired estimate for the second term on the left-hand side of \cref{eq:foo-conv4} is already established. It therefore remains to show that
\begin{equation}
  \label{eq:etDeltahPh-PhetDelta}
  \norm{ e^{t\Delta_h} P_h - P_h e^{t\Delta} }_{\mathcal{L}(\dot{H}^{\alpha,q}, L^q)}
  \leqslant c \min\left\{ h^{\alpha}, \frac{h^{2}}{t^{1 - \alpha/2}} \right\}.
\end{equation}
This estimate is well known and can be derived using arguments found in \cite[Theorem 6.10, Lemma 20.7]{Thomee2006}.
 For the sake of completeness, we provide a concise proof of \eqref{eq:etDeltahPh-PhetDelta} in two steps.

\medskip\noindent\textbf{Step 1.} We establish the estimate
\begin{equation}
\left\| e^{t\Delta_h}P_hv - P_h e^{t\Delta}v \right\|_{L^q} \leqslant c h^2 \|v\|_{\dot{H}^{2,q}},
\quad \forall v \in \dot H^{2,q}.
\label{eq:zhibo-1}
\end{equation}
Fix $ v \in \dot{H}^{2,q} $. Utilizing the Laplace inversion formula, we represent the semigroups as:
\begin{align*}
e^{t\Delta}v &= \frac{1}{2\pi i} \int_\Gamma e^{t\lambda} \lambda^{-1} (\lambda - \Delta)^{-1} \Delta v  \,\mathrm{d}\lambda + v, \\
e^{t\Delta_h}P_h v &= \frac{1}{2\pi i} \int_\Gamma e^{t\lambda} \lambda^{-1} (\lambda - \Delta_h)^{-1} \Delta_h P_h v  \,\mathrm{d}\lambda + P_h v,
\end{align*}
where the contour $\Gamma$, oriented such that the negative real axis lies to its left, is defined by
$$
\Gamma = \left\{ r e^{\pm i\theta} \mid \, r \in [t^{-1}, \infty) \right\} \cup \left\{ t^{-1} e^{i\omega} | \, \omega \in [-\theta, \theta] \right\},
$$
with $\theta \in (\pi/2, \pi)$ and $i$ denoting the imaginary unit.
Subtracting these expressions yields:
\begin{equation}
e^{t\Delta_h}P_h v - P_h e^{t\Delta} v = \frac{1}{2\pi i} \int_\Gamma e^{t\lambda} \lambda^{-1} \left[ (\lambda - \Delta_h)^{-1} \Delta_h P_h - P_h (\lambda - \Delta)^{-1} \Delta \right] v  \,\mathrm{d}\lambda.
\label{eq:sum}
\end{equation}
A direct computation establishes the identity
\begin{align*}
  \left[ (\lambda - \Delta_h)^{-1} \Delta_h P_h - P_h (\lambda - \Delta)^{-1} \Delta \right] v
= (\lambda - \Delta_h)^{-1} \Delta_h \left( P_h \Delta^{-1} - \Delta_h^{-1} P_h \right) \lambda (\lambda - \Delta)^{-1} \Delta v.
\end{align*}
This implies
\begin{align*}
& \left\| \left( (\lambda - \Delta_h)^{-1} \Delta_h P_h - P_h (\lambda - \Delta)^{-1} \Delta \right) v \right\|_{L^q} \\
\leqslant{}
&\left\| (\lambda - \Delta_h)^{-1} \Delta_h \right\|_{\mathcal{L}(\dot{H}_h^{0,q})}
\left\| P_h \Delta^{-1} - \Delta_h^{-1} P_h \right\|_{\mathcal{L}(L^q)}
\left\| \lambda (\lambda - \Delta)^{-1} \right\|_{\mathcal{L}(L^q)} \|v\|_{\dot{H}^{2,q}} \\
\leqslant{}
& ch^2 \left\| \lambda (\lambda - \Delta)^{-1} \right\|_{\mathcal{L}(L^q)} \|v\|_{\dot{H}^{2,q}} \\
\leqslant{}
& c h^2 \|v\|_{\dot{H}^{2,q}},
\end{align*}
where the second inequality follows from Lemma~\ref{lem:Deltah-resolvent}
and \cref{eq:Ritz-conv-Halpha} with $ \alpha=2 $,
and the third inequality uses the uniform boundedness of $\|\lambda (\lambda - \Delta)^{-1}\|_{\mathcal{L}(L^q)}$ for $\lambda \in \Gamma$.
Applying this bound to the integral representation in \eqref{eq:sum}, we deduce
\begin{align*}
  \norm{e^{t\Delta_h}P_hv - P_he^{t\Delta}v}_{L^q}
  \leqslant ch^2 \norm{v}_{\dot H^{2,q}}
  \int_{\Gamma} \big| e^{t\lambda}\big| \, \frac{| \mathrm{d}\lambda |}{|\lambda|}
  \leqslant ch^2 \norm{v}_{\dot H^{2,q}},
\end{align*}
which establishes \eqref{eq:zhibo-1}.

\medskip\noindent\textbf{Step 2.} For arbitrary \( v \in L^q \), the Laplace inversion formula gives
\begin{align*}
  e^{t\Delta}v &= \frac{1}{2\pi i} \int_{\Gamma} e^{t\lambda} (\lambda - \Delta)^{-1} v  \,\mathrm{d}\lambda, \\
  e^{t\Delta_h} P_h v &= \frac{1}{2\pi i} \int_{\Gamma} e^{t\lambda} (\lambda - \Delta_h)^{-1} P_h v  \,\mathrm{d}\lambda,
\end{align*}
where \( \Gamma \) is the contour defined in Step 1. Therefore, the difference is
\begin{align*}
  e^{t\Delta_h} P_h v - P_h e^{t\Delta} v
  &= \frac{1}{2\pi i} \int_{\Gamma} e^{t\lambda} \bigl[
    (\lambda - \Delta_h)^{-1} P_h - P_h (\lambda - \Delta)^{-1}
  \bigr] v  \,\mathrm{d}\lambda.
\end{align*}
Mimicking the derivation of \eqref{eq:zhibo-1} leads to the low-regularity estimate
\begin{equation}
  \label{eq:low_regularity}
  \norm{ e^{t\Delta_h} P_h v - P_h e^{t\Delta} v }_{L^q}
  \leqslant c h^{2} t^{-1} \norm{v}_{L^q},
  \quad \forall v \in L^q.
\end{equation}
Applying complex interpolation between \eqref{eq:zhibo-1} and \eqref{eq:low_regularity} yields
\begin{equation}
  \label{eq:interp_time}
  \norm{ e^{t\Delta_h} P_h v - P_h e^{t\Delta} v }_{L^q}
  \leqslant c h^{2} t^{\alpha/2 - 1} \norm{v}_{\dot{H}^{\alpha,q}},
  \quad \forall v \in \dot{H}^{\alpha,q}.
\end{equation}
Simultaneously, the following stability estimates hold for all \( v \in L^q \):
\begin{align*}
  \norm{ e^{t\Delta_h} P_h v }_{L^q}
  &\leqslant c \norm{P_h v}_{L^q} \leqslant c \norm{v}_{L^q}, \\
  \norm{ P_h e^{t\Delta} v }_{L^q}
  &\leqslant c \norm{ e^{t\Delta} v }_{L^q} \leqslant c \norm{v}_{L^q}.
\end{align*}
These rely on the boundedness of \( P_h \) on \( L^q \) (see \cref{eq:Ph-stab}), the boundedness of \( e^{t\Delta_h} \) on \( L^q \) (see \cref{eq:etDeltah}), and the boundedness of \( e^{t\Delta} \) on \( L^q \) (see \cref{lem:etDelta}). Consequently,
\begin{equation}
  \label{eq:uniform_bound}
  \norm{ e^{t\Delta_h} P_h v - P_h e^{t\Delta} v }_{L^q}
  \leqslant c \norm{v}_{L^q},
  \quad \forall v \in L^q.
\end{equation}
Complex interpolation between \eqref{eq:zhibo-1} and \eqref{eq:uniform_bound} then gives
\begin{equation}
  \label{eq:interp_space}
  \norm{ e^{t\Delta_h} P_h v - P_h e^{t\Delta} v }_{L^q}
  \leqslant c h^{\alpha} \norm{v}_{\dot{H}^{\alpha,q}},
  \quad \forall v \in \dot{H}^{\alpha,q}.
\end{equation}
Combining estimates \eqref{eq:interp_time} and \eqref{eq:interp_space}, we conclude that
\begin{equation*}
  \norm{ e^{t\Delta_h} P_h v - P_h e^{t\Delta} v }_{L^q}
  \leqslant c \min\left\{ h^{\alpha},\, h^{2} t^{\alpha/2 - 1} \right\} \norm{v}_{\dot{H}^{\alpha,q}},
  \quad \forall v \in \dot{H}^{\alpha,q},
\end{equation*}
which establishes \eqref{eq:etDeltahPh-PhetDelta}. This completes the proof of property (ii).

\hfill{$\blacksquare$}

\end{document}